\documentclass[11pt]{amsart}
\usepackage{latexsym}
\usepackage{amssymb}
\usepackage{amsmath}

\usepackage{amsfonts}



\newtheorem{theorem}{Theorem}[section]
\newtheorem{lemma}[theorem]{Lemma}
\newtheorem{proposition}[theorem]{Proposition}
\newtheorem{corollary}[theorem]{Corollary}
\newtheorem{definition}[theorem]{Definition}
\newtheorem{example}[theorem]{Example}

\newtheorem{question}[theorem]{Question}

\newcommand\nph{\varphi}

\newcommand\Proj{\mathop{\rm Proj}}

\newcommand\id{\mathop{\rm id}}
\newcommand\Ref{\mathop{\rm Ref}}
\newcommand\ran{\mathop{\rm Ran}}
\newcommand\Ball{\mathop{\rm Ball}}

\newcommand{\cl}[1]{\mathcal{#1}}
\newcommand{\bb}[1]{\mathbb{#1}}


\newcommand{\sca}[1]{\left\langle#1\right\rangle} 


\begin{document}

\title{Ranges of bimodule projections and reflexivity}

\author{G. K. Eleftherakis and I. G. Todorov}
\date{18 August 2011}
\address{Department of Mathematics, University of Athens,
Panepistimioupolis 157 84, Athens, Greece}

\email{gelefth@math.uoa.gr}

\address{Department of Pure Mathematics, Queen's University Belfast,
Belfast BT7 1NN, United Kingdom}

\email{i.todorov@qub.ac.uk}

\begin{abstract}
We develop a general framework for reflexivity in dual Banach
spaces, motivated by the question of when the weak* closed linear
span of two reflexive masa-bimodules is automatically reflexive. We
establish an affirmative answer to this question in a number of
cases by examining two new classes of masa-bimodules, defined in
terms of ranges of masa-bimodule projections. We give a number of
corollaries of our results concerning operator and spectral
synthesis, and show that the classes of masa-bimodules we study are
operator synthetic if and only if they are strong operator Ditkin.
\end{abstract}

\maketitle

\section{Introduction}

Operator synthesis, introduced by W. Arveson \cite{a} and
subsequently developed by V.S. Shulman and L. Turowska \cite{st1},
\cite{st2}, is an operator theoretic version of the well-known
concept of spectral synthesis in Harmonic Analysis. Due to the work
of W. Arveson, J. Froelich, J. Ludwig, N. Spronk and L. Turowska
\cite{a}, \cite{f}, \cite{st}, \cite{lt}, it is known that the
notion of spectral synthesis \lq\lq embeds'' into that of operator
synthesis in that, for a large class of locally compact groups $G$,
given a closed subset $E$ of $G$, there is a canonical way to
produce a subset $E^*$ of the direct product $G\times G$, so that
the set $E$ satisfies spectral synthesis if and only if the set
$E^*$ satisfies operator synthesis. Thus, the well-known, and still
open, problem of whether the union of two synthetic sets is
synthetic can be viewed as a special case of the problem asking
whether the union of two operator synthetic sets is operator
synthetic.

The notion of operator synthesis is closely related to that of reflexivity. Recall that
a subspace $\cl S$ of the space $\cl B(H_1,H_2)$ of all bounded linear operators from a Hilbert space $H_1$
into a Hilbert space $H_2$ is called \emph{reflexive} if it coincides with its \emph{reflexive hull} \cite{ls}
$$\Ref\cl S = \{T\in \cl B(H_1,H_2) : Tx \in \overline{\cl S x}, \ \mbox{ for all } x\in H_1\}.$$
Reflexive spaces are automatically closed in the weak* (and even the weak operator) topology.
In the present paper we initiate the study of the following question:


\begin{question}\label{q_rq}
Given two reflexive spaces $\cl S, \cl T\subseteq \cl B(H_1,H_2)$, when is the weak* closure
$\overline{\cl S + \cl T}^{w^*}$ of their sum reflexive?
\end{question}

Question \ref{q_rq} is closely related to the
question of whether the union of two operator synthetic sets is operator synthetic.
Indeed, an affirmative answer to Question \ref{q_rq},
in the case $\cl S$ and $\cl T$ are bimodules over maximal abelian selafdjoint algebras
(masa-bimodules for short) with operator synthetic supports,
implies that the union of these supports is operator synthetic.

We obtain an affirmative answer to Question \ref{q_rq} in a number
of cases. Crucial for our considerations is the class of
masa-bimodules consisting of all ranges of weak* continuous bimodule
projections. The latter maps have attracted considerable attention
in the literature, as they are precisely the idempotent Schur
multipliers (see \cite{kp}). We study a class of masa-bimodules,
which we call \emph{approximately $\frak{I}$-injective}
masa-bimodules, that are defined as the intersections of sequences
of ranges of uniformly bounded weak* continuous masa-bimodule
projections, as well as the more general class of
\emph{$\frak{I}$-decomposable} masa-bimodules (Definition
\ref{d_ad}). Our most general result concerning Question \ref{q_rq}
is that it has an affirmative answer when $\cl S$ is a reflexive
masa-bimodule, while $\cl T$ is the intersection of finitely many
$\frak{I}$-decomposable masa-bimodules. In particular,
$\overline{\cl S + \cl T}^{w^*}$ is reflexive whenever $\cl S$ is
reflexive and $\cl T$ is a masa-bimodule (or a CSL algebra) of
finite width. These results are given as an application of a more
general result obtained in Section \ref{s_gf}, where a new reflexive
hull in the setting of dual Banach spaces is introduced and
examined. We hope that this general setting may be applied in other
instances as well.

Sections \ref{s_syn} and \ref{s_iam} are devoted to connections with
spectral and operator synthesis. As a corollary of our results, we
show that the union of an operator synthetic set and a set of finite
width is operator synthetic. This extends the results of \cite{st1}
and \cite{t_jlms}, where it was shown that sets of finite width are
operator synthetic. We give some applications concerning unions of
sets of spectral synthesis in locally compact groups. We show that
the supports of the ranges of weak* continuous masa-bimodule
projections are always operator synthetic. While we do not know
whether the same holds for the supports of approximately
$\frak{I}$-injective ones, we show that these sets satisfy a weaker
form of operator synthesis (see Theorem \ref{th_decai}). Moreover,
we show that the supports of the (more general)
$\frak{I}$-decomposable masa-bimodules are operator synthetic if and
only if they are strong operator Ditkin. We note that it is an open
question in Harmonic Analysis (resp. Operator Theory) whether every
synthetic set (resp. every operator synthetic set) is necessarily
Ditkin (resp. operator Ditkin).

In Section \ref{s_convth} we address the converse to Question \ref{q_rq},
and obtain sufficient conditions which ensure the reflexivity of $\cl T$, provided
$\cl S$ and $\overline{\cl S + \cl T}^{w^*}$ are both reflexive.

\section{The general framework}\label{s_gf}

In this section we set up the general framework, introducing a
reflexive hull for subspaces of a dual Banach space relative to a
family of commuting weak* continuous idempotents. Let $\cl X$ be a
dual Banach space with a fixed predual $\cl X_*$, and $\cl B(\cl X)$
be the space of all bounded linear operators on $\cl X$. As usual,
we denote by $\|\phi\|$ the norm of an operator $\phi \in \cl B(\cl
X)$. An \emph{idempotent} in $\cl B(\cl X)$ is an element $\phi \in
\cl B(\cl X)$ such that $\phi^2 = \phi$. If $\cl S\subseteq \cl X$,
for the rest of this section we will denote by $\overline{\cl S}$
the closure of $\cl S$ in the weak* topology arising from the
identity $\cl X = (\cl X_*)^*$, and by referring to a \lq\lq closed
set'' (resp. \lq\lq closed subspace''), we will always mean a set
(resp. a subspace), closed in the weak* topology of $\cl X$. The
convergence of nets of elements of $\cl X$ will also always be with
respect to the weak* topology. Closures in the norm topology of $\cl
X$ will not appear explicitly in the paper.

Let $\frak{C} \subseteq \cl B(\cl X)$ be a Boolean algebra of pairwise commuting
weak* continuous idempotents
with top element the identity operator $\id$ and bottom element the zero operator.
This means that $\frak{C}$ is closed under complementation (that is, $\phi \in \frak{C}$ implies
$\phi^{\perp} \stackrel{def}{=} \id - \phi\in \frak{C}$), contains $0$ and $\id$,
and $\phi + \psi - \phi\psi, \phi\psi\in \frak{C}$ whenever $\phi,\psi \in \frak{C}$.
We denote by $\ran\phi$ the range of an idempotent $\phi$.
It is easy to verify that if $\phi,\psi\in \frak{C}$ then $\ran\phi \cap \ran\psi = \ran (\phi\psi)$ and
$\ran\phi + \ran\psi = \ran (\phi + \psi - \phi\psi)$.

\begin{definition}\label{d_ge}
Let $\cl Y\subseteq \cl X$ be a linear subspace.

(i) \ \ The subspace $\cl Y\subseteq \cl X$  will be called \emph{$\frak{C}$-invariant} if $\phi(\cl Y)\subseteq \cl Y$
for every $\phi \in \frak{C}$.

(ii) \ The subspace $\cl Y\subseteq \cl X$ will be called \emph{$\frak{C}$-injective} if
$\cl Y = \ran\phi$ for some $\phi \in \frak{C}$.

(iii) The \emph{$\frak{C}$-reflexive hull} of $\cl Y$ is the subspace
$$\Ref\mbox{}_{\frak{C}} \cl Y = \{x\in \cl X : \mbox{ if } \phi \in \frak{C} \mbox{ and } \phi(\cl Y) = \{0\} \mbox{ then } \phi(x) = 0\}.$$
The subspace $\cl Y$ is called \emph{$\frak{C}$-reflexive} if $\Ref_{\frak{C}}\cl Y = \cl Y$.
\end{definition}

We record some elementary properties of the notions introduced in Definition \ref{d_ge}.

\begin{proposition}\label{p_intr}
(i) \ If $\cl Y_1\subseteq \cl Y_2$ then $\Ref_{\frak{C}}\cl Y_1\subseteq \Ref_{\frak{C}}\cl Y_2$.

(ii) The intersection of any family of $\frak{C}$-reflexive subspaces is $\frak{C}$-reflexive.

(iii) If $\cl Y$ is a $\frak{C}$-invariant subspace then $\Ref_{\frak{C}}\cl Y$ is $\frak{C}$-invariant.

(iv) For every subspace $\cl Y\subseteq \cl X$, the
$\frak{C}$-reflexive hull $\Ref_{\frak{C}}\cl Y$ is a closed
subspace of $\cl X$.

(v) Every $\frak{C}$-injective space is closed and $\frak{C}$-invariant.
\end{proposition}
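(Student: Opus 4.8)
The plan is to treat the five assertions in turn, noting that (i) and (ii) are purely formal consequences of the definition of the $\frak{C}$-reflexive hull, while (iii)--(v) require the algebraic structure of $\frak{C}$ (commutativity and closure under complementation and products) together with the weak* continuity of its elements.

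For (i) I would argue directly from the definition: if $\cl Y_1\subseteq\cl Y_2$ and $\phi\in\frak{C}$ annihilates $\cl Y_2$, then $\phi$ already annihilates the smaller space $\cl Y_1$, so the defining condition for membership in $\Ref_{\frak{C}}\cl Y_1$ is at least as restrictive as that for $\Ref_{\frak{C}}\cl Y_2$; hence $\Ref_{\frak{C}}\cl Y_1\subseteq\Ref_{\frak{C}}\cl Y_2$. For (ii) I would first record the trivial inclusion $\cl Y\subseteq\Ref_{\frak{C}}\cl Y$, valid for any $\cl Y$ since any $\phi$ killing $\cl Y$ kills its elements. Writing $\cl Y=\bigcap_{i}\cl Y_i$ with each $\cl Y_i$ being $\frak{C}$-reflexive, the monotonicity from (i) gives $\Ref_{\frak{C}}\cl Y\subseteq\Ref_{\frak{C}}\cl Y_i=\cl Y_i$ for every $i$, so $\Ref_{\frak{C}}\cl Y\subseteq\bigcap_i\cl Y_i=\cl Y$; combined with the trivial inclusion, this yields the $\frak{C}$-reflexivity of $\cl Y$.

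The steps that genuinely use the hypotheses on $\frak{C}$ are (iii) and (v). For (iii), given $x\in\Ref_{\frak{C}}\cl Y$ and $\psi\in\frak{C}$, I must show $\psi(x)\in\Ref_{\frak{C}}\cl Y$; so I take $\phi\in\frak{C}$ with $\phi(\cl Y)=\{0\}$ and aim to prove $\phi(\psi(x))=0$. The key observation is that $\phi\psi\in\frak{C}$ (closure under products) and that $(\phi\psi)(\cl Y)=(\psi\phi)(\cl Y)=\psi(\phi(\cl Y))=\{0\}$, where commutativity of the idempotents is exactly what lets me move $\phi$ past $\psi$. Applying the defining property of $x$ to the idempotent $\phi\psi$ then gives $\phi(\psi(x))=(\phi\psi)(x)=0$, as required. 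Part (v) is handled by the standard identities for an idempotent $\phi$: $\ran\phi=\{x:\phi(x)=x\}=\ker(\id-\phi)$. Closedness then follows because $\id-\phi=\phi^{\perp}\in\frak{C}$ is weak* continuous, so its kernel is weak* closed; and $\frak{C}$-invariance follows because for $x\in\ran\phi$ and $\psi\in\frak{C}$ one has $\phi(\psi(x))=(\phi\psi)(x)=(\psi\phi)(x)=\psi(\phi(x))=\psi(x)$, so $\psi(x)\in\ran\phi$, again using commutativity.

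Finally, (iv) is a continuity argument. The subspace property is immediate from the linearity of each $\phi\in\frak{C}$. For closedness I would express the hull as an intersection of kernels, $\Ref_{\frak{C}}\cl Y=\bigcap\{\ker\phi:\phi\in\frak{C},\ \phi(\cl Y)=\{0\}\}$, and observe that each $\ker\phi=\phi^{-1}(\{0\})$ is weak* closed, since $\phi$ is weak* continuous and $\{0\}$ is weak* closed; an intersection of weak* closed sets is weak* closed. I do not anticipate a serious obstacle here: the only points that are not set-theoretic bookkeeping are the use of commutativity and closure under products in (iii) and (v), and the use of weak* continuity of the idempotents in (iv) and (v), both of which are guaranteed by the standing hypotheses on $\frak{C}$.
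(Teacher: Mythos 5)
Your proof is correct and follows essentially the same route as the paper's: (i) and (ii) by monotonicity plus the trivial inclusion $\cl Y\subseteq \Ref_{\frak{C}}\cl Y$, (iii) and the invariance in (v) by commutativity of the idempotents, and the closedness statements from weak* continuity. The only differences are cosmetic: in (iii) you pass to the product idempotent $\phi\psi\in\frak{C}$ where the paper simply commutes $\phi$ past $\psi$ and uses $\psi(x)=0$ directly, and in (iv) and (v) you phrase closedness as ``kernels of weak* continuous maps are weak* closed'' where the paper runs the equivalent net argument.
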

\begin{proof}
(i) is trivial.

(ii) Let $\cl Y_{\alpha}\subseteq \cl X$, $\alpha \in \bb{A}$, be a family of $\frak{C}$-reflexive subspaces
of $\cl X$. By (i),
$$\Ref\mbox{}_{\frak{C}} (\cap_{\alpha\in \bb{A}} \cl Y_{\alpha}) \subseteq\cap_{\alpha\in \bb{A}} \Ref\mbox{}_{\frak{C}}\cl Y_{\alpha} =
\cap_{\alpha\in \bb{A}} \cl Y_{\alpha}.$$ The converse inclusion is trivial.

(iii) Let $x\in \Ref_{\frak{C}}\cl Y$ and $\phi \in \frak{C}$. Suppose that $\psi\in \frak{C}$ annihilates
$\cl Y$. Then $\psi(\phi(x)) = \phi(\psi(x)) = \phi(0) = 0$; thus, $\phi(x) \in \Ref_{\frak{C}}\cl Y$.

(iv) Let $(x_{\alpha})\subseteq \Ref_{\frak{C}}\cl Y$ be a net
converging to $x$ and $\phi\in \frak{C}$ annihilate $\cl Y$. Then
$\phi(x_{\alpha}) = 0$ for each $\alpha$ and, by the continuity of
$\phi$, we have that $\phi(x)=0$; thus, $x\in \Ref_{\frak{C}}\cl Y$.

(v)
Let $\phi,\psi \in \frak{C}$. For every $x = \phi(x)$ we have $\psi(x) = \psi(\phi(x)) = \phi(\psi(x))\in \ran\phi$,
thus, $\psi(\ran \phi) \subseteq \ran\phi$. Thus, $\ran\phi$ is $\frak{C}$-invariant.
To show that $\ran\phi$ is close, suppose that $(x_{\alpha})\in \ran\phi$ is a net with $x_{\alpha}\rightarrow x$.
Then $x_{\alpha} = \phi(x_{\alpha}) \rightarrow \phi(x)$ and so $x = \phi(x) \in \ran\phi$.
\end{proof}

\begin{proposition}\label{p_sin}
Let $\cl Y\subseteq \cl X$ be a closed $\frak{C}$-invariant subspace and $\cl M\subseteq \cl X$ be a $\frak{C}$-injective subspace.

(i) \ \ The space $\cl M$ is $\frak{C}$-reflexive.

(ii) \ The algebraic sum $\cl Y + \cl M$ is closed.

(iii) We have $\Ref_{\frak{C}}(\cl Y + \cl M) = \Ref_{\frak{C}}(\cl
Y) + \cl M$. Thus, if $\cl Y$ is $\frak{C}$-reflexive then $\cl Y +
\cl M$ is $\frak{C}$-reflexive.
\end{proposition}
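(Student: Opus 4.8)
The plan is to fix $\phi_0 \in \frak{C}$ with $\cl M = \ran\phi_0$ and to work systematically with its complement $\psi_0 = \phi_0^{\perp} = \id - \phi_0 \in \frak{C}$. The single fact I would record at the outset, and use repeatedly, is that $\psi_0$ annihilates $\cl M$: every $m \in \cl M$ satisfies $\phi_0(m) = m$, whence $\psi_0(m) = m - \phi_0(m) = 0$. Since conversely $\ran\phi_0 = \ker\psi_0$, the weak* continuous idempotents $\phi_0, \psi_0$ furnish the splitting $x = \phi_0(x) + \psi_0(x)$ with $\phi_0(x) \in \cl M$, which I will apply to the relevant $x$ in each part.

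For (i), the inclusion $\cl M \subseteq \Ref_{\frak{C}}\cl M$ is automatic, so only the reverse needs proof. Given $x \in \Ref_{\frak{C}}\cl M$, I would feed the idempotent $\psi_0 \in \frak{C}$ into the definition of the $\frak{C}$-reflexive hull: since $\psi_0$ annihilates $\cl M$, it must annihilate $x$, giving $\psi_0(x) = 0$ and hence $x = \phi_0(x) \in \ran\phi_0 = \cl M$. For (ii), I would take a net $z_{\alpha} \in \cl Y + \cl M$ with $z_{\alpha} \to z$ and argue directly. Writing $z = \phi_0(z) + \psi_0(z)$, the first term lies in $\ran\phi_0 = \cl M$ for free. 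For the second, applying $\psi_0$ to each $z_{\alpha}$ kills the $\cl M$-component, and $\frak{C}$-invariance of $\cl Y$ keeps the result inside $\cl Y$; weak* continuity of $\psi_0$ then gives $\psi_0(z_{\alpha}) \to \psi_0(z)$, and closedness of $\cl Y$ forces $\psi_0(z) \in \cl Y$. Thus $z \in \cl M + \cl Y$.

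For (iii), the inclusion $\Ref_{\frak{C}}(\cl Y) + \cl M \subseteq \Ref_{\frak{C}}(\cl Y + \cl M)$ follows from monotonicity (Proposition \ref{p_intr}(i)), from part (i) giving $\cl M \subseteq \Ref_{\frak{C}}(\cl Y + \cl M)$, and from the fact that the hull is a subspace. The step carrying the actual content is the reverse inclusion, and this is where I expect the main (though still modest) obstacle to lie. Given $x \in \Ref_{\frak{C}}(\cl Y + \cl M)$, I split $x = \phi_0(x) + \psi_0(x)$ with $\phi_0(x) \in \cl M$ and reduce matters to showing $\psi_0(x) \in \Ref_{\frak{C}}(\cl Y)$. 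To that end I take an arbitrary $\phi \in \frak{C}$ annihilating $\cl Y$ and pass to the product $\phi\psi_0 \in \frak{C}$: it annihilates $\cl M$ because $\psi_0$ does, and it annihilates $\cl Y$ because $\psi_0(\cl Y) \subseteq \cl Y$ by invariance and then $\phi$ kills $\cl Y$; hence $\phi\psi_0$ annihilates $\cl Y + \cl M$, and membership of $x$ in $\Ref_{\frak{C}}(\cl Y + \cl M)$ yields $\phi(\psi_0(x)) = \phi\psi_0(x) = 0$. This gives $\psi_0(x) \in \Ref_{\frak{C}}(\cl Y)$ and completes the equality.

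The final assertion is then immediate: if $\cl Y$ is $\frak{C}$-reflexive, substituting $\Ref_{\frak{C}}(\cl Y) = \cl Y$ into (iii) gives $\Ref_{\frak{C}}(\cl Y + \cl M) = \cl Y + \cl M$. The real engine of the whole argument is the closure of $\frak{C}$ under products: multiplying an annihilator of $\cl Y$ by the complementary idempotent $\psi_0$ produces an annihilator of $\cl Y + \cl M$ that remains inside $\frak{C}$, and everything else is routine bookkeeping with the decomposition $\cl X = \ran\phi_0 \oplus \ran\psi_0$.
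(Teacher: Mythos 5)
Your proposal is correct and follows essentially the same route as the paper's proof: fix the idempotent $\phi_0$ with range $\cl M$, split every element via $x=\phi_0(x)+\psi_0(x)$ with $\psi_0=\phi_0^{\perp}$, and in (iii) use the product $\phi\psi_0\in\frak{C}$ of an annihilator of $\cl Y$ with the complement as an annihilator of $\cl Y+\cl M$. The only difference is cosmetic — the paper writes $x-\phi(x)$ where you write $\psi_0(x)$ and verifies $\psi(x-\phi(x))=0$ by splitting it into its $\phi$- and $\phi^{\perp}$-components, whereas you compute $\phi\psi_0(x)=0$ in one step — but the underlying idea is identical.
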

\begin{proof}
Let $\phi\in \frak{C}$ be an idempotent with range $\cl M$.

(i) Let $x\in \Ref_{\frak{C}}\cl M$.
By the assumptions on the family $\frak{C}$, we have that
$\phi^{\perp}\in \frak{C}$, and clearly $\phi^{\perp}(\cl M) = \{0\}$. It follows that $\phi^{\perp}(x) = 0$,
that is, $x = \phi(x)\in \ran\phi = \cl M$.

(ii) Suppose that $(y_{\alpha})\subseteq \cl Y$ and $(m_{\alpha})\subseteq \cl M$ are nets such that
$y_{\alpha} + m_{\alpha}\rightarrow x$. The invariance of $\cl Y$ implies
$$x -\phi(x) = \lim\mbox{}_{\alpha} (y_{\alpha} + m_{\alpha} - \phi(y_{\alpha}) - \phi(m_{\alpha}))
= \lim\mbox{}_{\alpha} (y_{\alpha} - \phi(y_{\alpha}))\in \cl Y.$$
Thus, $x = (x-\phi(x)) + \phi(x)\in \cl Y + \cl M$.

(iii) Let $x\in \Ref_{\frak{C}}(\cl Y + \cl M)$.
We will show that $x - \phi(x) \in \Ref_{\frak{C}}\cl Y$.
Suppose that $\psi\in \frak{C}$ annihilates $\cl Y$.
Then $\phi^{\perp}\psi$ annihilates $\cl Y + \cl M$; indeed,
if $y\in \cl Y$ and $m\in \cl M$ then
$$\phi^{\perp}\psi(y+m) = \phi^{\perp}\psi(y) + \psi\phi^{\perp}(m) = 0.$$
Since the idempotents in $\frak{C}$ pairwise commute,
$$\phi(\psi(x-\phi(x))) = \phi\psi(x) - \phi\psi(x) = 0.$$
On the other hand, since $x\in \Ref_{\frak{C}}(\cl Y + \cl M)$ and
$\phi^{\perp}\psi$ annihilates $\cl Y + \cl M$, we have that
$$\phi^{\perp}(\psi(x-\phi(x))) = \phi^{\perp}\psi(x) = 0.$$
It follows that
$$\psi(x-\phi(x)) = \phi(\psi(x-\phi(x))) + \phi^{\perp}(\psi(x-\phi(x))) = 0.$$
Thus, $x - \phi(x)\in \Ref_{\frak{C}}\cl Y$ and hence
$x = (x-\phi(x)) + \phi(x) \in \Ref_{\frak{C}}\cl Y + \cl M$.
The second statement in (iii) is now immediate.
\end{proof}

\begin{definition}\label{d_ai}
A subspace $\cl M\subseteq \cl X$ will be called \emph{approximately $\frak{C}$-injective}
if there exists a sequence $(\phi_n)_{n\in \bb{N}}\subseteq \frak{C}$
and a constant $C > 0$ such that $\|\phi_n\|\leq C$, $\ran\phi_{n+1}\subseteq \ran\phi_n$, $n\in \bb{N}$,
and $\cl M = \cap_{n=1}^{\infty} \ran\phi_n$.
We call any sequence $(\phi_n)_{n=1}^{\infty}$ with these properties an \emph{associated sequence of
idempotents for $\cl M$}.
\end{definition}

\noindent {\bf Remarks (i) } Let $\cl M$ and $(\phi_n)_{n\in
\bb{N}}$ be as in Definition \ref{d_ai}. Since $\cl B(\cl X)$ is
itself a dual Banach space (see, for example, paragraph A.3.3 of
\cite{blm}), the sequence $(\phi_n)_{n\in \bb{N}}$ has a cluster
point $\phi\in \cl B(\cl X)$ in the point-weak* topology. It is
easily seen that $\phi$ is a (not necessarily weak* continuous)
idempotent with range $\cl M$. Moreover, since the ranges
$\ran\phi_n$ are nested, it follows that any (weak*) cluster point
of $(\phi_n(x))_{n=1}^{\infty}$ lies in $\cl M$.

\smallskip

\noindent {\bf (ii) }
Let $\cl M_j$, $j = 1,\dots,k$, be approximately $\frak{C}$-injective spaces. Then
$\cl M \stackrel{def}{=} \cap_{j=1}^k \cl M_j$ is approximately $\frak{C}$-injective. To see this,
let $(\phi_n^j)_{n=1}^{\infty}$ be an associated sequence of
idempotents for $\cl M_j$, $j = 1,\dots,k$. It is easy to see that the sequence
($\phi_n^1\phi_n^2\cdots\phi_n^k)_{n\in \bb{N}}$ is uniformly bounded and the intersection of
the ranges of its elements is $\cl M$.

\begin{theorem}\label{th_ai}
Let $\cl Y\subseteq \cl X$ be a closed $\frak{C}$-invariant subspace and $\cl M\subseteq \cl X$ be an
approximately $\frak{C}$-injective subspace.

(i) \ The space $\cl Y + \cl M$ is closed.

(ii) We have $\Ref_{\frak{C}}(\cl Y + \cl M) = \Ref_{\frak{C}}(\cl
Y) + \cl M$. Thus, if $\cl Y$ is $\frak{C}$-reflexive then $\cl Y +
\cl M$ is $\frak{C}$-reflexive.
\end{theorem}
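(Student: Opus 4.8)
The plan is to replace the single idempotent $\phi$ of Proposition~\ref{p_sin} by an associated sequence $(\phi_n)_{n\in\bb{N}}$ for $\cl M$, to carry out every computation at a finite stage $n$ (where $\phi_n$ is weak* continuous and $\cl M\subseteq\ran\phi_n$), and to pass to the limit only at the very end. The limit is taken along a weak* cluster point of the sequence $(\phi_n(x))_n$; since $\|\phi_n\|\leq C$, this sequence is norm bounded and hence has a weak* cluster point by the Banach--Alaoglu theorem, and by Remark~(i) following Definition~\ref{d_ai} every such cluster point lies in $\cl M$.

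For (i), let $x$ lie in the weak* closure of $\cl Y+\cl M$ and choose a net $(y_\alpha+m_\alpha)$ with $y_\alpha\in\cl Y$, $m_\alpha\in\cl M$ and $y_\alpha+m_\alpha\to x$. Since $m_\alpha\in\cl M\subseteq\ran\phi_n$ we have $\phi_n(m_\alpha)=m_\alpha$, whence $(y_\alpha+m_\alpha)-\phi_n(y_\alpha+m_\alpha)=y_\alpha-\phi_n(y_\alpha)=\phi_n^\perp(y_\alpha)\in\cl Y$, using the $\frak{C}$-invariance of $\cl Y$. Letting $\alpha$ vary and using the weak* continuity of $\phi_n$ together with the closedness of $\cl Y$, I obtain $x-\phi_n(x)\in\cl Y$ for every $n$. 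Now pick a weak* cluster point $m\in\cl M$ of $(\phi_n(x))_n$; along the corresponding subnet $x-\phi_{n_k}(x)\to x-m$, which lies in the closed subspace $\cl Y$. Hence $x=(x-m)+m\in\cl Y+\cl M$.

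For (ii), the inclusion $\Ref_{\frak{C}}(\cl Y)+\cl M\subseteq\Ref_{\frak{C}}(\cl Y+\cl M)$ follows from the monotonicity of the hull (Proposition~\ref{p_intr}(i)), the trivial containment $\cl M\subseteq\Ref_{\frak{C}}(\cl Y+\cl M)$, and the fact that the right-hand side is a subspace (Proposition~\ref{p_intr}(iv)). For the reverse inclusion, fix $x\in\Ref_{\frak{C}}(\cl Y+\cl M)$ and an arbitrary $\psi\in\frak{C}$ annihilating $\cl Y$. Exactly as in the proof of Proposition~\ref{p_sin}(iii), for each $n$ the idempotent $\phi_n^\perp\psi$ lies in $\frak{C}$ and annihilates $\cl Y+\cl M$: it kills $\cl Y$ because $\psi$ does, and it kills $\cl M$ because $\phi_n^\perp$ vanishes on $\ran\phi_n\supseteq\cl M$ and the idempotents commute. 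Since $x\in\Ref_{\frak{C}}(\cl Y+\cl M)$, this forces $\phi_n^\perp\psi(x)=0$, that is, $\psi(x)=\psi(\phi_n(x))$ for every $n$. Passing to a weak* cluster point $m\in\cl M$ of $(\phi_n(x))_n$ and invoking the weak* continuity of $\psi$ gives $\psi(x)=\psi(m)$, so $\psi(x-m)=0$. As $\psi$ was an arbitrary annihilator of $\cl Y$, I conclude $x-m\in\Ref_{\frak{C}}(\cl Y)$ and hence $x\in\Ref_{\frak{C}}(\cl Y)+\cl M$. Taking $\cl Y$ to be $\frak{C}$-reflexive then yields the final assertion.

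The main obstacle is that the limit idempotent $\phi$ of Remark~(i) need not be weak* continuous, so the direct argument of Proposition~\ref{p_sin} --- which applies $\phi$ to a weak* limit --- is not available. The resolution rests on two features of the associated sequence: the uniform bound $\|\phi_n\|\leq C$, which supplies a weak* cluster point of $(\phi_n(x))_n$, and the nestedness $\ran\phi_{n+1}\subseteq\ran\phi_n$, which (through Remark~(i)) confines that cluster point to $\cl M$. With these in hand, each step is simply the finite case run at stage $n$ with $\phi$ replaced by the weak* continuous $\phi_n$, the single passage to the limit being deferred to the end.
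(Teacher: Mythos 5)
Your proof is correct and follows essentially the same route as the paper's: decompose relative to the weak* continuous $\phi_n$ at each finite stage, observe $x-\phi_n(x)$ lands in $\cl Y$ (resp.\ is killed by every annihilator of $\cl Y$), and pass to a weak* cluster point of $(\phi_n(x))_n$, which lies in $\cl M$ by the nestedness of the ranges. The only cosmetic difference is that the paper invokes Proposition \ref{p_sin} (ii) and (iii) applied to $\ran\phi_n$ where you re-derive those computations inline.
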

\begin{proof}
Let $(\phi_n)_{n=1}^{\infty}$ be an associated sequence of idempotents for $\cl M$.

(i) Suppose that $x\in \overline{\cl Y + \cl M}$. Proposition \ref{p_sin} (ii) implies that
$$x\in \overline{\cl Y + \ran\phi_n} = \cl Y + \ran\phi_n.$$
Thus, $x = y_n + m_n$, where $y_n \in \cl Y$ and $m_n\in \ran\phi_n$, and so
$$x - \phi_n(x) = y_n + m_n - \phi_n(y_n) - \phi_n(m_n) = y_n - \phi_n(y_n) \in \cl Y$$ by
the $\frak{C}$-invariance of $\cl Y$.
Let $m$ be a cluster point of $(\phi_n(x))_{n=1}^{\infty}$. Then $x = (x-m) + m\in \cl Y + \cl M$.

(ii) Let $x\in \Ref_{\frak{C}}(\cl Y + \cl M)$.
By Proposition \ref{p_sin} (iii),
$$x\in \Ref\mbox{}_{\frak{C}}(\cl Y + \ran\phi_n) = \Ref\mbox{}_{\frak{C}}\cl Y + \ran\phi_n.$$
Using Proposition \ref{p_intr} (iii), we see as in (i) that
$$x = (x-\phi_n(x)) + \phi_n(x)\in \Ref\mbox{}_{\frak{C}}\cl Y + \ran\phi_n.$$
Taking a cluster point of $(\phi_n(x))_{n=1}^{\infty}$, we conclude that $x\in \Ref\mbox{}_{\frak{C}}\cl Y + \cl M$.
The last two statements are now immediate.
\end{proof}

If $(\cl M_n)_{n=1}^{\infty}$ is a sequence of closed subsets of
$\cl X$, let $\limsup_{n\in \bb{N}}\cl M_n$ be the set of all
cluster points of sequences $(x_n)_{n=1}^{\infty}$, where $x_n \in
\cl M_n$, $n\in \bb{N}$.

\begin{definition}\label{d_ad}
A closed subspace $\cl V\subseteq \cl X$ will be called $\frak{C}$- \emph{decomposable}
if there exists a sequence $(\phi_n)_{n=1}^{\infty} \subseteq \frak{C}$ and a sequence $(\cl W_n)_{n=1}^{\infty}$
of $\frak{C}$-injective subspaces of $\cl X$ such that
\begin{itemize}
\item [(a)] there exists $C > 0$ with $\|\phi_n\|\leq C$, $n\in \bb{N}$;
\item [(b)] $\cl V\subseteq \ran\phi_n + \cl W_n$ for each $n$;
\item [(c)] $\cl W_n\subseteq \cl V$ for each $n$;
\item [(d)] $\limsup_{n\in \bb{N}}\ran\phi_n \subseteq \cl V$.
\end{itemize}
We call the sequence $(\phi_n)_{n=1}^{\infty}$ an
\emph{associated sequences of idempotents}, and $(\cl W_n)_{n=1}^{\infty}$
an \emph{associated sequences of subspaces, for $\cl V$}.
\end{definition}

\begin{lemma}\label{l_ade}
Let $\cl V$ be a $\frak{C}$-decomposable subspace with associated sequences
$(\phi_n)_{n=1}^{\infty}$ and $(\cl W_n)_{n=1}^{\infty}$ of idempotents and subspaces,
respectively. Then
$$\cl V = \cap_{n=1}^{\infty}(\ran\phi_n + \cl W_n) = \limsup\mbox{}_{n\in \bb{N}} \cl W_n + \limsup\mbox{}_{n\in \bb{N}} \ran\phi_n.$$
\end{lemma}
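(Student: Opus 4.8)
The plan is to establish the two set equalities by one common argument, since the only nontrivial inclusion in each rests on the same computation. The easy inclusions are immediate: condition (b) gives $\cl V\subseteq \ran\phi_n + \cl W_n$ for every $n$, hence $\cl V\subseteq \cap_n(\ran\phi_n+\cl W_n)$; while a cluster point of a sequence drawn from the spaces $\cl W_n\subseteq \cl V$ (by (c)) lies in the closed subspace $\cl V$, so $\limsup_n\cl W_n\subseteq \cl V$, and together with (d) this yields $\limsup_n\cl W_n+\limsup_n\ran\phi_n\subseteq \cl V$. It therefore remains to prove the two inclusions $\cap_n(\ran\phi_n+\cl W_n)\subseteq \cl V$ and $\cl V\subseteq \limsup_n\cl W_n+\limsup_n\ran\phi_n$.

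The key computation is the following. Suppose $x\in \ran\phi_n+\cl W_n$ for every $n$ (this holds for $x$ in the intersection, and by (b) it also holds for $x\in \cl V$), and write $x=m_n+w_n$ with $m_n\in \ran\phi_n$ and $w_n\in \cl W_n$. Since $m_n$ is fixed by the idempotent $\phi_n$, we have $\phi_n(x)=m_n+\phi_n(w_n)$, whence
$$x-\phi_n(x)=w_n-\phi_n(w_n).$$
Each $\cl W_n$ is $\frak{C}$-injective, hence $\frak{C}$-invariant by Proposition \ref{p_intr} (v), so $\phi_n(w_n)\in \cl W_n$ and therefore $x-\phi_n(x)\in \cl W_n\subseteq \cl V$ by (c).

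Now I would extract a cluster point. By (a) the sequence $(\phi_n(x))_n$ is norm-bounded (by $C\|x\|$), so by the weak* compactness of bounded sets it has a weak* cluster point $m$; since $\phi_n(x)\in \ran\phi_n$, condition (d) gives $m\in \limsup_n\ran\phi_n\subseteq \cl V$. Choosing a subnet $(\phi_{n_i}(x))$ converging to $m$, the net $x-\phi_{n_i}(x)$ converges to $x-m$. For the first inclusion this already gives $x-m\in \cl V$, because $\cl V$ is closed and each $x-\phi_n(x)\in \cl V$, so $x=(x-m)+m\in \cl V$. For the second inclusion the same subnet exhibits $x-m$ as a cluster point of the sequence $(x-\phi_n(x))_n$, whose $n$-th term lies in $\cl W_n$, so $x-m\in \limsup_n\cl W_n$, and again $x=(x-m)+m\in \limsup_n\cl W_n+\limsup_n\ran\phi_n$.

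The step needing the most care is the passage to cluster points. Because $\cl X$ need not have a separable predual, a cluster point of the bounded sequence $(\phi_n(x))$ is in general only the limit of a subnet, not of a subsequence, which is exactly why $\limsup$ was defined through cluster points of sequences rather than limits of subsequences. The delicate point to verify is that one and the same subnet can be used for both $(\phi_n(x))$ and $(x-\phi_n(x))$, so that $m\in \limsup_n\ran\phi_n$ and $x-m\in \limsup_n\cl W_n$ are produced simultaneously and recombine to $x$. Everything else reduces to the elementary identity above and to the closedness of $\cl V$.
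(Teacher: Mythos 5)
Your proof is correct and follows essentially the same route as the paper: the identity $x-\phi_n(x)=w_n-\phi_n(w_n)\in \cl W_n$ via $\frak{C}$-invariance of the $\frak{C}$-injective spaces $\cl W_n$, followed by extracting a weak* cluster point $m$ of the bounded sequence $(\phi_n(x))_n$ and writing $x=(x-m)+m$. The paper merely packages the two nontrivial inclusions as a single cyclic chain $\cl V\subseteq \cap_n(\ran\phi_n+\cl W_n)\subseteq \limsup_n\cl W_n+\limsup_n\ran\phi_n\subseteq \cl V$, which is logically equivalent to your two-inclusion organization.
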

\begin{proof}
By definition, $\cl V \subseteq \cap_{n=1}^{\infty}(\ran\phi_n + \cl W_n)$.
Suppose that $x\in \ran\phi_n + \cl W_n$ for each $n$ and write
$x = x_n + w_n$, where $\phi_n(x_n) = x_n$ and $w_n\in \cl W_n$, $n\in \bb{N}$.
By Proposition \ref{p_intr} (v), $\cl W$ is $\frak{C}$-invariant and hence
$$x - \phi_n(x) = x_n + w_n - \phi_n(x_n) - \phi_n(w_n) = w_n - \phi_n(w_n)\in \cl W_n.$$
Letting $m$ be a cluster point of $(\phi_n(x))_{n=1}^{\infty}$ (such a cluster point exists since
the sequence $(\phi_n)_{n\in \bb{N}}$ is uniformly bounded),
we see that
$$x = (x-m) + m \in \limsup\mbox{}_{n\in \bb{N}} \cl W_n +  \limsup\mbox{}_{n\in \bb{N}} \ran\phi_n.$$

Since $\cl W_n\subseteq \cl V$ for each $n$ and $\cl V$ is closed, we have that
$\limsup_{n\in \bb{N}} \cl W_n \subseteq \cl V$. From condition (d),
$\limsup_{n\in \bb{N}} \cl W_n +  \limsup_{n\in \bb{N}} \ran\phi_n \subseteq \cl V$.
The equalities are established.
\end{proof}

\noindent {\bf Remarks (i) }
Every $\frak{C}$-injective subspace is trivially approximately $\frak{C}$-injective.
Taking $\cl W_n = \{0\}$ in Definition \ref{d_ad} we see, on the other hand, that every
approximately $\frak{C}$-injective subspace is $\frak{C}$-decomposable.

\smallskip

\noindent {\bf (ii) }
It follows from Lemma \ref{l_ade}, Proposition \ref{p_intr} and Proposition \ref{p_sin}
that every $\frak{C}$-decomposable subspace is $\frak{C}$-invariant and $\frak{C}$-reflexive.

\smallskip

\noindent {\bf (iii) } In concrete applications, one often has that the sequence $(\cl W_n)_{n\in \bb{N}}$
in Definition \ref{d_ad} is increasing, while the sequence $(\ran\phi_n)_{n\in \bb{N}}$ is
decreasing. In this case $\limsup_{n\in \bb{N}} \ran\phi_n = \cap_{n=1}^{\infty}\ran\phi_n$
and $\limsup_{n\in \bb{N}} \cl W_n = \overline{\cup_{n=1}^{\infty}\cl W_n}$.

\begin{theorem}\label{th_adr}
Let $\cl Y\subseteq \cl X$ be a closed $\frak{C}$-invariant subspace and $\cl V\subseteq \cl X$ be a
$\frak{C}$-decomposable subspace. Then $\Ref_{\frak{C}}(\cl Y + \cl V) = \overline{\Ref_{\frak{C}}(\cl Y) + \cl V}$.
In particular, if $\cl Y$ is $\frak{C}$-reflexive then $\overline{\cl Y + \cl V}$ is $\frak{C}$-reflexive.
\end{theorem}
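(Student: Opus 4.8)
The plan is to establish the two inclusions separately. The inclusion $\overline{\Ref_{\frak{C}}(\cl Y) + \cl V} \subseteq \Ref_{\frak{C}}(\cl Y + \cl V)$ is routine: by monotonicity (Proposition \ref{p_intr}(i)) we have $\Ref_{\frak{C}}(\cl Y) \subseteq \Ref_{\frak{C}}(\cl Y + \cl V)$, while $\cl V \subseteq \cl Y + \cl V \subseteq \Ref_{\frak{C}}(\cl Y + \cl V)$; since the right-hand side is a closed subspace (Proposition \ref{p_intr}(iv)), it contains the algebraic sum $\Ref_{\frak{C}}(\cl Y) + \cl V$ together with its weak* closure. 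All the substance lies in the reverse inclusion.

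For that, I would fix associated sequences $(\phi_n)_{n=1}^\infty$ and $(\cl W_n)_{n=1}^\infty$ for $\cl V$ and take $x \in \Ref_{\frak{C}}(\cl Y + \cl V)$. The structural step is to observe that for each $n$ the subspace $\cl Y + \cl W_n$ is closed (Proposition \ref{p_sin}(ii)) and $\frak{C}$-invariant, being a sum of two $\frak{C}$-invariant subspaces ($\cl W_n$ is $\frak{C}$-invariant by Proposition \ref{p_intr}(v)). I would then apply Proposition \ref{p_sin}(iii) twice, to the pair $(\cl Y + \cl W_n, \ran\phi_n)$ and to the pair $(\cl Y, \cl W_n)$, obtaining
$$\Ref\mbox{}_{\frak{C}}(\cl Y + \cl W_n + \ran\phi_n) = \Ref\mbox{}_{\frak{C}}(\cl Y) + \cl W_n + \ran\phi_n.$$
Since $\cl V \subseteq \ran\phi_n + \cl W_n$ by condition (b) of Definition \ref{d_ad}, monotonicity yields $x \in \Ref_{\frak{C}}(\cl Y) + \cl W_n + \ran\phi_n$, so I may write $x = r_n + w_n + m_n$ with $r_n \in \Ref_{\frak{C}}(\cl Y)$, $w_n \in \cl W_n$ and $m_n \in \ran\phi_n$.

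The concluding step follows the cluster-point technique of Theorem \ref{th_ai}. Using $\phi_n(m_n) = m_n$ together with the $\frak{C}$-invariance of $\Ref_{\frak{C}}(\cl Y)$ (Proposition \ref{p_intr}(iii)) and of $\cl W_n$, I would compute
$$x - \phi_n(x) = (r_n - \phi_n(r_n)) + (w_n - \phi_n(w_n)) \in \Ref\mbox{}_{\frak{C}}(\cl Y) + \cl W_n \subseteq \Ref\mbox{}_{\frak{C}}(\cl Y) + \cl V,$$
the last inclusion using $\cl W_n \subseteq \cl V$ (condition (c)). As $(\phi_n)$ is uniformly bounded (condition (a)), the sequence $(\phi_n(x))$ lies in a weak*-compact ball and hence has a weak* cluster point $m$; since $\phi_n(x) \in \ran\phi_n$, condition (d) forces $m \in \limsup_{n\in\bb{N}} \ran\phi_n \subseteq \cl V$. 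Passing to a subnet along which $\phi_n(x) \to m$, the corresponding terms $x - \phi_n(x)$ converge to $x - m$ while remaining in the fixed set $\Ref_{\frak{C}}(\cl Y) + \cl V$; hence $x - m \in \overline{\Ref_{\frak{C}}(\cl Y) + \cl V}$, and adding $m \in \cl V$ gives $x \in \overline{\Ref_{\frak{C}}(\cl Y) + \cl V}$.

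The main obstacle, and the very reason a closure appears in the statement, is that $\Ref_{\frak{C}}(\cl Y) + \cl V$ need not be closed, so the cluster point $m$ can only be located after passing to the closure; one cannot recover the cleaner algebraic equality of Theorem \ref{th_ai}. Care is also needed because the decomposition $x = r_n + w_n + m_n$ varies with $n$, so the argument must be arranged so that a single cluster point is extracted at the very end while the differences $x - \phi_n(x)$ are treated uniformly as elements of one fixed set. Finally, the \lq\lq in particular'' assertion is immediate: when $\cl Y$ is $\frak{C}$-reflexive the displayed equality reads $\Ref_{\frak{C}}(\cl Y + \cl V) = \overline{\cl Y + \cl V}$, and since a $\frak{C}$-reflexive hull is always $\frak{C}$-reflexive (a one-line check that the idempotents annihilating $\cl Z$ and those annihilating $\Ref_{\frak{C}}(\cl Z)$ coincide), the space $\overline{\cl Y + \cl V}$ is $\frak{C}$-reflexive.
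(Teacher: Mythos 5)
Your argument is correct and follows essentially the same route as the paper: the easy inclusion via monotonicity and closedness of the hull, then the decomposition $x = r_n + w_n + m_n$ obtained from Proposition \ref{p_sin}(iii) applied to $\cl Y + \cl W_n + \ran\phi_n$, the observation that $x - \phi_n(x) \in \Ref_{\frak{C}}(\cl Y) + \cl W_n \subseteq \Ref_{\frak{C}}(\cl Y) + \cl V$, and a weak* cluster point of $(\phi_n(x))$ landing in $\cl V$ by condition (d). The only cosmetic difference is that the paper cites Theorem \ref{th_ai}(ii) at the corresponding step, whereas you unwind it into two applications of Proposition \ref{p_sin}(iii); the substance is identical.
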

\begin{proof}
Let $(\phi_n)$ and $(\cl W_n)$ be associated sequences of idempotents and subspaces for $\cl V$.
By Proposition \ref{p_intr} (i) and (iv), $\overline{\Ref_{\frak{C}}\cl Y + \cl V} \subseteq \Ref_{\frak{C}}(\cl Y + \cl V)$.
Fix $x\in \Ref_{\frak{C}}(\cl Y + \cl V)$.
Letting $\cl M_n = \ran\phi_n$, we have by Proposition \ref{p_sin} (iii) that
$\Ref_{\frak{C}}(\cl Y + \cl M_n + \cl W_n) =  \Ref_{\frak{C}}(\cl Y) + \cl M_n + \cl W_n$.
By Proposition \ref{p_intr} (i) and Theorem \ref{th_ai} (ii), we have that
$$\Ref\mbox{}_{\frak{C}}(\cl Y + \cl V)\subseteq \Ref(\cl Y + \cl W_n + \cl M_n) = \Ref(\cl Y) + \cl W_n + \cl M_n$$
and the arguments in the proof of Theorem \ref{th_ai} show that
$$x-\phi_n(x)\in \Ref\mbox{}_{\frak{C}}(\cl Y) + \cl W_n \subseteq \Ref\mbox{}_{\frak{C}}(\cl Y) + \cl V.$$
Choosing a cluster point $z$ of $(\phi_n(x))_{n=1}^{\infty}$, we have that
$$x = (x-z) + z \in \overline{\Ref\mbox{}_{\frak{C}}(\cl Y) + \cl V} + \limsup\mbox{}_{n\in \bb{N}} \cl M_n \subseteq
\overline{\Ref\mbox{}_{\frak{C}}(\cl Y) + \cl V}.$$
We thus showed that $\Ref_{\frak{C}}(\cl Y + \cl V) \subseteq \overline{\Ref_{\frak{C}}\cl Y + \cl V}$
and hence we have equality. The last statement is now immediate.
\end{proof}

Our last aim in this section is Theorem \ref{th_int} which establishes a useful intersection
property for $\frak{C}$-decomposable subspaces. First, we need a lemma.

\begin{lemma}\label{l_j}
Let $\cl Y\subseteq \cl X$ be a closed $\frak{C}$-invariant subspace
and $\cl V_1,\dots,\cl V_k$ be $\frak{C}$-decomposable subspaces.
Set $\cl V_{i,j} = \cl V_i\cap\dots\cap\cl V_j$ if $i \leq j$, and
$\cl V_{i,j} = \cl X$ if $i > j$. Then, for every $j=1,\dots,k$, we
have
$$\cl V_{1,j} \cap \overline{\cl Y + \cl V_{j+1,k}} \subseteq \overline{\cl Y + \cl V_{1,k}}.$$
\end{lemma}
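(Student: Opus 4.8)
The plan is to prove the inclusion by downward induction on $j$, keeping $\cl Y$ and $\cl V_1,\dots,\cl V_k$ fixed. The base case $j=k$ is immediate: since $\cl V_{k+1,k}=\cl X$ we have $\overline{\cl Y+\cl V_{k+1,k}}=\cl X$, so the left-hand side is just $\cl V_{1,k}\subseteq\overline{\cl Y+\cl V_{1,k}}$. For the inductive step I assume the statement for $j+1$ and prove it for $j$. I fix associated sequences $(\phi_n)$ and $(\cl W_n)$ of idempotents and subspaces for the $\frak{C}$-decomposable space $\cl V_{j+1}$, and for each $n$ I choose $\psi_n\in\frak{C}$ with $\ran\psi_n=\cl W_n$ (possible since $\cl W_n$ is $\frak{C}$-injective). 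Throughout, I write $\cl W=\overline{\cl Y+\cl V_{1,k}}$ for the target, and I record that $\cl V_{1,j}$, $\cl V_{j+2,k}$ (intersections of $\frak{C}$-invariant spaces) and $\overline{\cl Y+\cl V_{j+2,k}}$ are closed and $\frak{C}$-invariant.

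Now fix $x\in\cl V_{1,j}\cap\overline{\cl Y+\cl V_{j+1,k}}$ and choose a net $y_\alpha+v_\alpha\to x$ with $y_\alpha\in\cl Y$ and $v_\alpha\in\cl V_{j+1,k}=\cl V_{j+1}\cap\cl V_{j+2,k}$. The first step is to split $x=\phi_n(x)+\phi_n^{\perp}(x)$ and to locate each piece. Applying $\phi_n$ and $\phi_n^{\perp}$ to the net, and using that $v_\alpha\in\cl V_{j+1}\subseteq\ran\phi_n+\cl W_n$ (so $\phi_n^{\perp}(v_\alpha)\in\cl W_n$ by invariance of $\cl W_n$) together with $v_\alpha\in\cl V_{j+2,k}$, I obtain $\phi_n(x)\in\overline{\cl Y+\cl V_{j+2,k}}$ and $\phi_n^{\perp}(x)\in\overline{\cl Y+(\cl W_n\cap\cl V_{j+2,k})}$; moreover both pieces lie in $\cl V_{1,j}$ by invariance.

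The key step is to feed $\phi_n^{\perp}(x)$ into the inductive hypothesis via the second idempotent. I set $p_n=\psi_n(\phi_n^{\perp}(x))$ and $q_n=\psi_n^{\perp}(\phi_n^{\perp}(x))$. Since $\psi_n^{\perp}$ annihilates $\cl W_n$ and preserves $\cl Y$, the relation $\phi_n^{\perp}(x)\in\overline{\cl Y+(\cl W_n\cap\cl V_{j+2,k})}$ gives $q_n\in\cl Y$; since $\psi_n$ preserves $\cl Y$ and $\cl V_{j+2,k}$, the same relation gives $p_n\in\overline{\cl Y+\cl V_{j+2,k}}$. At the same time $p_n\in\ran\psi_n=\cl W_n\subseteq\cl V_{j+1}$ and $p_n\in\cl V_{1,j}$, so $p_n\in\cl V_{1,j+1}\cap\overline{\cl Y+\cl V_{j+2,k}}$. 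The inductive hypothesis for $j+1$ now gives $p_n\in\cl W$, whence $\phi_n^{\perp}(x)=p_n+q_n\in\cl W$ for every $n$. Finally I take a weak* cluster point $z$ of the net $(\phi_n(x))_n$, which is bounded by condition (a): closedness gives $z\in\cl V_{1,j}\cap\overline{\cl Y+\cl V_{j+2,k}}$, and condition (d) gives $z\in\cl V_{j+1}$, so the inductive hypothesis applies once more to yield $z\in\cl W$. Passing to the corresponding subnet, $x-z=\lim_i\phi^{\perp}_{n_i}(x)\in\cl W$ since $\cl W$ is weak* closed, and therefore $x=z+(x-z)\in\cl W$, as required.

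The point on which everything turns, and the main obstacle, is that peeling off $\cl V_{j+1}$ with the single idempotent $\phi_n$ destroys either the constraint $x\in\cl V_{1,j}$ or the remaining intersection $\cl V_{j+2,k}$: a naive split only places $\phi_n^{\perp}(x)$ back into $\cl V_{1,j}\cap\overline{\cl Y+\cl V_{j+1,k}}$, the very set one starts from. The device that breaks this circularity is the second idempotent $\psi_n$ with range $\cl W_n$: applying $\psi_n^{\perp}$ dumps the excess into $\cl Y$, while $\psi_n$ retains exactly the $\cl W_n\cap\cl V_{j+2,k}$ content and, belonging to $\frak{C}$, simultaneously preserves $\cl V_{1,j}$ and $\cl V_{j+2,k}$; this manufactures an element $p_n$ genuinely lying in $\cl V_{1,j+1}\cap\overline{\cl Y+\cl V_{j+2,k}}$, to which the induction applies. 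The degenerate indices (for instance $j=k-1$, where $\cl V_{j+2,k}=\cl X$) and the existence of the weak* cluster points are routine to check.
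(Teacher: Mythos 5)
Your proof is correct and follows essentially the same route as the paper's: your decomposition $x=\phi_n(x)+\psi_n(\phi_n^{\perp}(x))+\psi_n^{\perp}(\phi_n^{\perp}(x))$ is exactly the paper's $x=\phi_n(x)+\rho_n(x)+\sigma_n^{\perp}(x)$ with $\rho_n=\psi_n-\phi_n\psi_n$ and $\sigma_n=\phi_n+\psi_n-\phi_n\psi_n$, and the two pieces are handled the same way (inductive hypothesis applied to $p_n\in\cl V_{1,j+1}\cap\overline{\cl Y+\cl V_{j+2,k}}$ and to a weak* cluster point of $(\phi_n(x))_n$). The only cosmetic difference is that you route the identification $q_n\in\cl Y$ through the intermediate inclusion $\phi_n^{\perp}(x)\in\overline{\cl Y+(\cl W_n\cap\cl V_{j+2,k})}$ rather than observing directly that $\sigma_n^{\perp}$ annihilates $\cl V_{j+1,k}$ and preserves $\cl Y$.
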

\begin{proof}
First note that the spaces $\cl V_{i,j}$ are $\frak{C}$-invariant
since $\cl V_1,\dots,\cl V_n$ are $\frak{C}$-invariant. To prove the
statement, use backward induction on $j$. If $j = k$ then the
inclusion is trivial. Suppose that it holds for $j+1$. Let
$(\phi_n)_{n\in \bb{N}}$ be an associated sequence of idempotents,
and $(\cl W_n)_{n\in \bb{N}}$ be an associated sequence of
subspaces, for $\cl V_{j+1}$, and let $\psi_n\in \frak{C}$ be an
idempotent with range $\cl W_n$, $n\in \bb{N}$. Then $\sigma_n
\stackrel{def}{=}\phi_n + \psi_n - \phi_n \psi_n$ has range
$\ran\phi_n + \cl W_n$. Set $\rho_n = \psi_n - \phi_n\psi_n$.

Let $x\in \cl V_{1,j} \cap \overline{\cl Y + \cl V_{j+1,k}}$;
we have that
$$x = \phi_n(x) + \rho_n(x) + \sigma_n^{\perp}(x).$$
Since $x\in \overline{\cl Y + \cl V_{j+1}}$, $\cl V_{j+1}\subseteq \ran\sigma_n$ and $\cl Y$ is $\frak{C}$-invariant,
we have that $\sigma_n^{\perp}(x)\in \cl Y$ for each $n$. On the other hand,
by the $\frak{C}$-invariance of $\cl V_{1,j}$ and the fact that $\ran\rho_n\subseteq \cl W_n$, we have that
$$\rho_n(x)\in \cl V_{1,j}\cap \cl W_n\subseteq \cl V_{1,j}\cap \cl V_{j+1} = \cl V_{1,j+1}.$$
Since $\overline{\cl Y + \cl V_{j+1,k}}$ is invariant under $\rho_n$,
we conclude that
$$\rho_n(x) \in \cl V_{1,j+1} \cap \overline{\cl Y + \cl V_{j+1,k}}
\subseteq
\cl V_{1,j+1} \cap \overline{\cl Y + \cl V_{j+2,k}} \subseteq \overline{\cl Y + \cl V_{1,k}}.$$
Thus, $\rho_n(x) + \sigma_n^{\perp}(x)\in \overline{\cl Y + \cl V_{1,k}}$ for each $n$.

Let $y$ be a cluster point of $(\phi_n(x))_{n\in \bb{N}}$. Then
$$y\in \left(\limsup\mbox{}_{n\in \bb{N}}\ran\phi_n\right)\cap \cl V_{1,j} \subseteq \cl V_{j+1} \cap \cl V_{1,j}
= \cl V_{1,j+1}.$$
By $\frak{C}$-invariance, $y$ also belongs to $\overline{\cl Y + \cl V_{j+1,k}}$ and hence,
by the inductive assumption,
$y\in \overline{\cl Y + \cl V_{1,k}}$.
On the other hand, $x-y$ is a cluster point of $(\rho_n(x) + \sigma_n^{\perp}(x))_{n\in \bb{N}}$ and hence
$x = y + (x-y)\in \overline{\cl Y + \cl V_{1,k}}$.
\end{proof}

\begin{theorem}\label{th_int}
Let $\cl Y\subseteq \cl X$ be a closed $\frak{C}$-invariant subspace
and $\cl V_1,\dots,\cl V_k$ be $\frak{C}$-decomposable subspaces of $\cl X$.
Then
$$\overline{\cl Y + \cap_{j=1}^k\cl V_j} = \cap_{j=1}^k \overline{\cl Y + \cl V_j}.$$
Moreover, if $\cl Y$ is $\frak{C}$-reflexive then
$\overline{\cl Y + \cap_{j=1}^k\cl V_j}$ is $\frak{C}$-reflexive.
\end{theorem}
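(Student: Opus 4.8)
The plan is to prove the identity $\overline{\cl Y + \cap_{j=1}^k\cl V_j} = \cap_{j=1}^k \overline{\cl Y + \cl V_j}$ by establishing the two inclusions separately, the forward one being trivial and the reverse one being the heart of the matter. For the inclusion $\overline{\cl Y + \cap_{j=1}^k\cl V_j} \subseteq \cap_{j=1}^k \overline{\cl Y + \cl V_j}$, I observe that for each fixed $j$ we have $\cap_{i=1}^k \cl V_i \subseteq \cl V_j$, so $\cl Y + \cap_{i=1}^k \cl V_i \subseteq \cl Y + \cl V_j$; taking weak* closures and then intersecting over $j$ yields the claim directly.

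The substantive direction is $\cap_{j=1}^k \overline{\cl Y + \cl V_j} \subseteq \overline{\cl Y + \cap_{j=1}^k\cl V_j}$, and here the plan is to invoke Lemma \ref{l_j} with the specialization $j = 0$. In the notation of that lemma, $\cl V_{1,0} = \cl X$ (since $1 > 0$) and $\cl V_{1,k} = \cap_{j=1}^k \cl V_j$, so the case $j=0$ of the lemma would read $\cl X \cap \overline{\cl Y + \cl V_{1,k}} \subseteq \overline{\cl Y + \cl V_{1,k}}$, which is vacuous. Thus the lemma as literally stated starts at $j=1$ and does not immediately close the gap; instead I would chain it. The key step is to note that $\cap_{j=1}^k \overline{\cl Y + \cl V_j}$ is contained in $\cl V_1 \cap \cdots$ only after a reduction, so the cleaner route is to prove the statement by a separate induction on $k$ that mirrors Lemma \ref{l_j}: for $k=1$ the identity is trivial, and for the inductive step I would take $x \in \cap_{j=1}^k \overline{\cl Y + \cl V_j}$, apply the inductive hypothesis to the last $k-1$ factors to place $x$ in $\overline{\cl Y + \cl V_{2,k}}$, and then use that $x \in \overline{\cl Y + \cl V_1}$ together with an idempotent decomposition as in Lemma \ref{l_j}.

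More directly, the cleanest implementation is to apply Lemma \ref{l_j} itself to obtain, for each $x$ in the right-hand side, membership in $\overline{\cl Y + \cl V_{1,k}}$. The mechanism is that Lemma \ref{l_j} already carries out exactly the needed cluster-point and idempotent-splitting argument: writing $x = \phi_n(x) + \rho_n(x) + \sigma_n^\perp(x)$ for the associated idempotents of $\cl V_1$, one shows $\sigma_n^\perp(x) \in \cl Y$, $\rho_n(x)$ lands in $\cl V_1 \cap \cl W_n \subseteq \cl V_1 \cap \cl V_2 = \cl V_{1,2}$ and simultaneously in $\overline{\cl Y + \cl V_{2,k}}$, and a cluster point $y$ of $(\phi_n(x))$ lies in $\cl V_1 \cap \overline{\cl Y + \cl V_{2,k}}$; the backward induction of the lemma then delivers $y \in \overline{\cl Y + \cl V_{1,k}}$. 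I expect the main obstacle to be the boundary bookkeeping of the indices $\cl V_{i,j}$ and verifying that the hypotheses of Lemma \ref{l_j} apply to the relevant $\cl V_i$ at each stage — in particular confirming that each partial intersection $\cl V_{1,j}$ is $\frak{C}$-invariant (which the lemma's opening remark supplies) so that the $\frak{C}$-invariance of $\cl Y$ can be combined with it in the splitting. Once the set equality is in hand, the final reflexivity claim is immediate: by Proposition \ref{p_intr}(ii) each $\overline{\cl Y + \cl V_j}$ is $\frak{C}$-reflexive (being $\overline{\cl Y + \cl V_j}$ with $\cl Y$ reflexive and $\cl V_j$ decomposable, via Theorem \ref{th_adr}), and the intersection of $\frak{C}$-reflexive subspaces is $\frak{C}$-reflexive.
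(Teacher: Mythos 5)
Your plan coincides with the paper's proof: an induction on $k$ that reduces the identity to a two-term statement of the form $\overline{\cl Y + \cl V_1}\cap\overline{\cl Y+\cl V_{2,k}}\subseteq\overline{\cl Y+\cl V_{1,k}}$ (the paper peels off $\cl V_k$ rather than $\cl V_1$, which is immaterial), established via the splitting $x=\phi_n(x)+\rho_n(x)+\sigma_n^{\perp}(x)$ and a cluster-point argument in which Lemma \ref{l_j} supplies the decisive containment. Two small repairs to your write-up: Lemma \ref{l_j} cannot be applied to $x$ itself (as you observe, the $j=0$ case is vacuous, and $x$ need not lie in any $\cl V_{1,j}$), only to the pieces $\rho_n(x)$ and the cluster point of $(\phi_n(x))$, both of which genuinely land in $\cl V_1$; and the inclusion $\cl W_n\subseteq \cl V_1\cap\cl V_2$ you assert is a mis-indexing carried over from the lemma's proof --- here $\cl W_n\subseteq\cl V_1$ only, but the correct weaker fact $\rho_n(x)\in\cl V_1\cap\overline{\cl Y+\cl V_{2,k}}$ combined with the $j=1$ case of Lemma \ref{l_j} still yields $\rho_n(x)\in\overline{\cl Y+\cl V_{1,k}}$ (the paper handles this piece instead by a direct net approximation using the idempotent $\psi_n$ with range $\cl W_n$, a purely cosmetic difference).
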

\begin{proof}
By induction, it suffices to show that if
$\cl V = \cap_{j=1}^{k-1}\cl V_j$ then
\begin{equation}\label{eq_last}
\overline{\cl Y + \cl V}\cap \overline{\cl Y + \cl V_k} = \overline{\cl Y + \cl V\cap\cl V_k}.
\end{equation}
Let $x$ be an element of the left hand side of (\ref{eq_last}).
Let $(\phi_n)_{n\in \bb{N}}$ be an associated sequence of maps,
and $(\cl W_n)_{n\in \bb{N}}$ be an associated sequence of spaces, for $\cl V_k$.
Then, for every $n$ we have that $x = y + \phi_n(x) + y_n$ for some $y\in \cl Y$ and $y_n\in \cl W_n$.
Since $\overline{\cl Y + \cl V}$ is invariant under $\phi_n$, we have
$$x - \phi_n(x) = y + y_n \in \overline{\cl Y + \cl V} \cap (\cl Y + \cl W_n).$$
Suppose that
$y + y_n = \lim_{\alpha} (s_{\alpha} + t_{\alpha})$, where $s_{\alpha}\in \cl Y$ and $t_{\alpha}\in \cl V$.
Writing $\psi_n$ for the idempotent in $\frak{C}$ with range $\cl W_n$, we have
$$y_n = \psi(y_n) = \lim\mbox{}_{\alpha} \psi_n(s_{\alpha} - y) + \psi_n(t_{\alpha}).$$
By $\frak{C}$-invariance,
$\psi_n(t_{\alpha})\in \cl V\cap \cl W_n \subseteq \cl V\cap \cl V_k$ and $\psi_n(s_{\alpha} - y)\in \cl Y$.
It follows that $y_n\in \overline{\cl Y + (\cl V\cap \cl V_k)}$, and so
$y + y_n\in \overline{\cl Y + (\cl V\cap \cl V_k)}$, for every $n$.

Suppose that $\lim_{k\rightarrow\infty}\phi_{n_k}(x) = z$ for some $z$; thus
$z\in \limsup_{n\in \bb{N}} \ran\phi_n \subseteq \cl V_k$.
By Lemma \ref{l_j},
$$z\in \cl V_k\cap \overline{\cl Y + \cl V} \subseteq \overline{\cl Y + \cl V\cap \cl V_k}.$$
Also, $x - z = \lim_{k\rightarrow\infty} (y + y_{n_k}) \in \overline{\cl Y + \cl V\cap \cl V_k}$.
It follows that $x\in \overline{\cl Y + \cl V\cap \cl V_k}$ and (\ref{eq_last}) is established.

Now suppose that $\cl Y$ is $\frak{C}$-reflexive. By Theorem \ref{th_adr},
$\overline{\cl Y + \cl V_j}$ is $\frak{C}$-reflexive for each $j$.
Now Proposition \ref{p_intr} (ii) implies that
$\cap_{j=1}^k\overline{\cl Y + \cl V_j}$, and hence $\overline{\cl Y + \cap_{j=1}^k\cl V_j}$, is $\frak{C}$-reflexive.
\end{proof}


\section{Sums of masa-bimodules}\label{s_amb}

In this section, we apply the results of Section \ref{s_gf} in the case
where $\cl X = \cl B(H_1,H_2)$ for some Hilbert spaces $H_1$ and $H_2$,
equipped with its canonical weak* topology coming from the identification
of $\cl B(H_1,H_2)$ with the dual space of the space $\cl C_1(H_2,H_1)$ of all trace class operators
from $H_2$ into $H_1$.

We first fix notation. Let $(X,m)$ and $(Y,n)$ be standard measure spaces, that is, the measures $m$ and $n$ are
regular Borel measures with respect to some Borel structures on $X$ and $Y$ arising from
complete metrizable topologies.
Let $H_1 = L^2(X,m)$, $H_2 = L^2(Y,n)$, and $\cl D_1$ (resp. $\cl D_2$)
be the algebra of all multiplication operators on $H_1$
(resp. $H_2$) by functions from $L^{\infty}(X,m)$ (resp. $L^{\infty}(Y,n)$).
It is well-known that $\cl D_1$ and $\cl D_2$ are maximal abelian selfadjoint subalgebras (masas) of
$\cl B(H_1)$ and $\cl B(H_2)$, respectively.
A subspace $\cl U\subseteq \cl B(H_1,H_2)$ will be called a \emph{masa-bimodule} if
$BTA\in \cl U$ whenever $A\in \cl D_1$, $B\in \cl D_2$ and $T\in \cl U$.

We need several facts and notions from the theory of masa-bimodules
\cite{a}, \cite{eks}, \cite{st1}. A subset $E\subseteq X\times Y$ is
called {\it marginally null} if $E\subseteq (X_0\times
Y)\cup(X\times Y_0)$, where $m(X_0) = n(Y_0) = 0$. We call two
subsets $E,F\subseteq X\times Y$ {\it marginally equivalent} (and
write $E\cong F$) if the symmetric difference of $E$ and $F$ is
marginally null. A set $\kappa\subseteq X\times Y$ is called {\it
$\omega$-open} if it is marginally equivalent to a (countable) union
of the form $\cup_{i=1}^{\infty} \alpha_i\times\beta_i$, where
$\alpha_i\subseteq X$ and $\beta_i\subseteq Y$ are measurable, $i\in
\bb{N}$. The complements of $\omega$-open sets are called {\it
$\omega$-closed}. An operator $T\in \cl B(H_1,H_2)$ is said to be
supported on $\kappa$ if $M_{\chi_{\beta}}TM_{\chi_{\alpha}} = 0$
whenever $(\alpha\times\beta)\cap \kappa \simeq \emptyset$. (Here
$M_g$ stands for the operator of multiplication by the function
$g$.) If $\kappa$ is an $\omega$-closed set, let
$$\frak{M}_{\max}(\kappa) = \{T\in \cl B(H_1,H_2) : T \mbox{ is supported on } \kappa\}.$$
The space $\frak{M}_{\max}(\kappa)$ is a reflexive masa-bimodule
and, conversely, every reflexive masa-bimodule is of this form, for
some $\omega$-closed set $\kappa$ \cite{eks}. Given a weak* closed
masa-bimodule $\cl U$, its \emph{support} is the $\omega$-closed set
$\kappa$ such that $\Ref\cl U = \frak{M}_{\max}(\kappa)$. Given an
$\omega$-closed $\kappa\subseteq X\times Y$, there exists a smallest
(with respect to inclusion) weak* closed masa-bimodule $\cl U$ with
support $\kappa$ \cite{a}, \cite{st1}; we denote this minimal
masa-bimodule by $\frak{M}_{\min}(\kappa)$. We will often use the
fact that if $\kappa_1$ and $\kappa_2$ are $\omega$-closed sets with
$\kappa_1\subseteq \kappa_2$ then
$\frak{M}_{\max}(\kappa_1)\subseteq \frak{M}_{\max}(\kappa_2)$ (this
follows from the definition of $\frak{M}_{\max}(\kappa)$) and
$\frak{M}_{\min}(\kappa_1)\subseteq \frak{M}_{\min}(\kappa_2)$ (this
follows from \cite[Theorem 3.3]{st1}).

Recall that the projective tensor product $\Gamma(X,Y)
\stackrel{def}{=} L^2(X,m) \hat{\otimes } L^2(Y,n)$, whose norm will
be denoted by $\|\cdot\|_{\Gamma}$, can be canonically identified
with the predual of $\cl B(H_1,H_2)$. Each element $h\in
\Gamma(X,Y)$ can be associated with a series (convergent with
respect to $\|\cdot\|_{\Gamma}$) $h \sim \sum_{i=1}^{\infty}
f_i\otimes g_i$, where $\sum_{i=1}^{\infty} \|f_i\|_2^2 < \infty$
and $\sum_{i=1}^{\infty} \|g_i\|_2^2 < \infty$; we have that
$\langle T,h\rangle = \sum_{i=1}^{\infty} (Tf_i,\overline{g_i})$. It
follows \cite{a} that $h$ can be identified with a complex function
on $X\times Y$, defined up to a marginally null set, and given by
$h(x,y) = \sum_{i=1}^{\infty} f_i(x)g_i(y)$. A function $\nph \in
L^{\infty}(X\times Y,m\times n)$ is called a \emph{Schur multiplier}
if $\nph h$ is equivalent (with respect to the product measure) to a
function from $\Gamma(X,Y)$ for every $h\in \Gamma(X,Y)$ (this
definition is equivalent to other definitions used in the
literature, see \cite{peller}). The Closed Graph Theorem implies
that pointwise multiplication by a Schur multiplier $\nph$ is
bounded, and by taking its dual, we obtain a bounded map $S_{\nph}$
on $\cl B(H_1,H_2)$, which we call a \emph{Schur map}. It is
standard to verify that $S_{\nph}$ is a masa-bimodule map in the
sense that $S_{\nph}(BTA) = BS_{\nph}(T)A$, for all $T\in \cl
B(H_1,H_2)$, $A\in \cl D_1$ and $B\in \cl D_2$. Indeed, we have the
following well-known fact, which follows from results of U. Haagerup
\cite{haag} and R. R. Smith \cite{smith}.

\begin{theorem}\label{th_schur}
Let $\Phi \in \cl B(\cl B(H_1,H_2))$. The following are equivalent:

(i) \ \ $\Phi$ is a bounded weak* continuous masa-bimodule map;

(ii) \ $\Phi$ is a completely bounded weak* continuous masa-bimodule map;

(iii) $\Phi = S_{\nph}$ for some Schur multiplier $\nph$;

(iv) there exists a bounded column operator $(A_i)_{i\in \bb{N}}$ with entries in $\cl D_1$ and
a bounded row operator $(B_i)_{i\in \bb{N}}$ with entries in $\cl D_2$ such that
$\Phi(T) = \sum_{i=1}^{\infty} B_iTA_i$, $T\in \cl B(H_1,H_2)$ (the series being weak* convergent).
\end{theorem}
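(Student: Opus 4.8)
The plan is to establish the four equivalences by proving the cycle (iii) $\Rightarrow$ (i) $\Rightarrow$ (ii) $\Rightarrow$ (iv) $\Rightarrow$ (iii); the remaining implication (ii) $\Rightarrow$ (i) is trivial, since every completely bounded map is bounded. The implication (iii) $\Rightarrow$ (i) is essentially recorded in the discussion preceding the theorem: if $\nph$ is a Schur multiplier, then pointwise multiplication by $\nph$ is a bounded map on $\Gamma(X,Y)$ by the Closed Graph Theorem, and $S_{\nph}$ is by definition its Banach-space dual, hence bounded and automatically weak* continuous. The bimodule identity $S_{\nph}(BTA)=BS_{\nph}(T)A$ for $A\in\cl D_1$, $B\in\cl D_2$ would be checked by a direct computation on elementary tensors $f\otimes g\in\Gamma(X,Y)$, using that the predual action of $BTA$ on $h$ coincides with the action of $T$ on $(x,y)\mapsto a(x)b(y)h(x,y)$ when $A=M_a$ and $B=M_b$.

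For (i) $\Rightarrow$ (ii) I would invoke Smith's automatic complete boundedness theorem \cite{smith}: since $\cl D_1$ is a masa in $\cl B(H_1)$ and $\Phi$ is in particular a right $\cl D_1$-module map, a bounded module map over a masa is automatically completely bounded, with completely bounded norm equal to $\|\Phi\|$. Here only the one-sided module structure is needed, so the left $\cl D_2$-action is not used at this stage.

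The substantive step is (ii) $\Rightarrow$ (iv), which rests on Haagerup's representation of normal completely bounded bimodule maps \cite{haag}. Because $\Phi$ is a weak* continuous, completely bounded $\cl D_2$-$\cl D_1$-bimodule map and $\cl D_1,\cl D_2$ are masas, the Christensen--Sinclair/Haagerup representation produces families $(A_i)\subseteq\cl D_1$ and $(B_i)\subseteq\cl D_2$ with $\Phi(T)=\sum_i B_iTA_i$; complete boundedness controls the sums $\sum_i A_i^*A_i$ and $\sum_i B_iB_i^*$, so that $(A_i)$ is a bounded column and $(B_i)$ a bounded row operator, while weak* continuity guarantees that the series converges in the weak* topology. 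Finally, for (iv) $\Rightarrow$ (iii), writing $A_i=M_{a_i}$ and $B_i=M_{b_i}$, boundedness of the column and row operators means $\sum_i|a_i(x)|^2\le C$ a.e.\ and $\sum_i|b_i(y)|^2\le C$ a.e.; a direct computation on rank-one elements of $\Gamma(X,Y)$ then identifies $\Phi$ with $S_{\nph}$ for $\nph(x,y)=\sum_i a_i(x)b_i(y)$, and Haagerup's characterization of Schur multipliers \cite{haag} --- a function is a Schur multiplier precisely when it admits such a decomposition with both $\ell^2$-sums essentially bounded --- shows that $\nph$ is a Schur multiplier.

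I expect the main obstacle to be the representation step (ii) $\Rightarrow$ (iv): extracting the concrete sum $\sum_i B_iTA_i$ with \emph{genuinely} bounded row and column operators from an abstract completely bounded bimodule map requires the normal (weak* continuous) form of the Christensen--Sinclair representation, together with careful bookkeeping to ensure that the representing operators actually lie in the masas $\cl D_1$ and $\cl D_2$ and that the defining series converges weak*. The other three implications are either direct verifications or clean citations, so the technical weight of the theorem is concentrated here.
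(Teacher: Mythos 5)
Your outline is correct and coincides with the route the paper itself indicates: the paper offers no proof of Theorem \ref{th_schur}, stating only that it ``follows from results of U.~Haagerup \cite{haag} and R.~R.~Smith \cite{smith}'', and your cycle --- Smith's automatic complete boundedness for masa module maps for (i)$\Rightarrow$(ii), the normal Christensen--Sinclair/Haagerup representation for (ii)$\Rightarrow$(iv), and the Haagerup--Peller characterisation of Schur multipliers for (iv)$\Rightarrow$(iii) --- is exactly the standard argument those citations encode. Nothing in your sketch deviates from or falls short of what the paper relies on.
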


The Schur multiplier $\nph$ associated with a Schur map $\Phi$ in
(iii) is uniquely determined by $\Phi$ and called its symbol
\cite{kp}. We let $\frak{I}$ be the set of all idempotent Schur maps
(which we will call \emph{Schur idempotents}). It was shown in
\cite{kp} that the symbols of the maps from $\frak{I}$ are
characteristic functions of subsets of $X\times Y$ that are both
$\omega$-closed and $\omega$-open. If $\nph$ and $\psi$ are Schur
multipliers, one easily checks that $S_{\nph} S_{\psi} =
S_{\nph\psi}$; it follows that $\frak{I}$ is a Boolean algebra of
pairwise commuting idempotents in the sense of Section \ref{s_gf}.

\medskip

\noindent {\bf Remark } Recall that an operator space $\cl U
\subseteq \cl B(H_1,H_2)$ is called \emph{injective} if for every
pair of operator spaces $\cl U_1\subseteq \cl U_2$, every completely
bounded map $\Phi_1 : \cl U_1\rightarrow \cl U$ has a completely
bounded extension $\Phi_2$ to $\cl U_2$ with $\|\Phi_1\|_{cb} =
\|\Phi_2\|_{cb}$ \cite{paulsen}. It follows from paragraph 1.6.1 of
\cite{blm} that every approximately $\frak{I}$-injective
masa-bimodule is the range of a completely bounded (not necessarily
weak* continuous) idempotent. Hence, Arveson's Extension Theorem
implies that every approximately $\frak{I}$-injective masa-bimodule
has an extension property for completely bounded maps, not
necessarily with preservation of the completely bounded norm.

\begin{proposition}\label{p_mb}
Let $\cl U\subseteq \cl B(H_2,H_1)$ be a weak* closed subspace. The following are equivalent:

(i) \ \ $\cl U$ is a masa-bimodule;

(ii) \ $\cl U$ is invariant under all Schur maps;

(iii) $\cl U$ is $\frak{I}$-invariant.
\end{proposition}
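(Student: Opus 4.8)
The plan is to prove the cyclic chain (i) $\Rightarrow$ (ii) $\Rightarrow$ (iii) $\Rightarrow$ (i). The middle implication is formal, the first rests directly on the structure theorem, and essentially all the work lies in (iii) $\Rightarrow$ (i). Since here $\cl U\subseteq \cl B(H_2,H_1)$, I will apply Theorem \ref{th_schur} with the roles of $(X,\cl D_1)$ and $(Y,\cl D_2)$ interchanged; a Schur map then has the form $T\mapsto\sum_i A_iTB_i$ with $A_i\in\cl D_1$ and $B_i\in\cl D_2$.

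For (i) $\Rightarrow$ (ii), let $\cl U$ be a weak* closed masa-bimodule and let $\Phi$ be an arbitrary Schur map. By Theorem \ref{th_schur} there exist bounded column and row operators with entries $A_i\in\cl D_1$ and $B_i\in\cl D_2$ such that $\Phi(T)=\sum_i A_iTB_i$, the series converging weak*. For $T\in\cl U$ each summand $A_iTB_i$ lies in $\cl U$ by the bimodule property, hence so does every finite partial sum; as $\cl U$ is weak* closed, the weak* limit $\Phi(T)$ lies in $\cl U$. The implication (ii) $\Rightarrow$ (iii) is immediate, since every element of $\frak{I}$ is in particular a Schur map.

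For (iii) $\Rightarrow$ (i), the first step is to locate the relevant idempotents inside $\frak{I}$. For measurable $\alpha\subseteq X$ and $\beta\subseteq Y$ the function $\chi_{\alpha\times\beta}=\chi_\alpha\otimes\chi_\beta$ is an idempotent Schur multiplier whose Schur map is the two-sided multiplication $T\mapsto M_{\chi_\alpha}TM_{\chi_\beta}$; hence $S_{\chi_{\alpha\times\beta}}\in\frak{I}$. Thus $\frak{I}$-invariance yields $M_{\chi_\alpha}TM_{\chi_\beta}\in\cl U$ for all such $\alpha,\beta$ and all $T\in\cl U$, and by linearity $M_sTM_t\in\cl U$ for all simple functions $s$ on $X$ and $t$ on $Y$. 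To reach arbitrary $A=M_f\in\cl D_1$ and $B=M_g\in\cl D_2$, I approximate $f$ and $g$ in the supremum norm by simple functions $s_n\to f$ and $t_n\to g$; then $M_{s_n}TM_{t_n}\to M_fTM_g=ATB$ in operator norm. Since $\cl U$ is weak* closed it is norm closed, so $ATB\in\cl U$, which is precisely the masa-bimodule property.

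The main obstacle, such as it is, is this passage in (iii) $\Rightarrow$ (i) from invariance under the idempotents in $\frak{I}$ to the full bimodule relation. The key realization is that $\frak{I}$ already contains the rectangular idempotents $S_{\chi_{\alpha\times\beta}}$, so that invariance under them propagates by linearity to two-sided multiplication by simple functions, and then to all of $\cl D_1$ and $\cl D_2$ once one notes that simple functions are norm dense in $L^\infty$ and that weak* closedness forces norm closedness of $\cl U$. No weak* limiting argument is needed in this last step, only a norm approximation, so it becomes routine once the correct idempotents have been identified.
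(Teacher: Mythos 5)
Your proof is correct and follows essentially the same route as the paper's: (i)$\Rightarrow$(ii) via the series representation in Theorem \ref{th_schur}~(iv) together with weak* closedness, and (iii)$\Rightarrow$(i) by observing that the two-sided multiplications by characteristic functions (equivalently, by projections in the masas) lie in $\frak{I}$ and then passing to general $L^\infty$ multipliers by norm density of simple functions. The only cosmetic difference is that you spell out the norm-approximation step which the paper compresses into the remark that a von Neumann algebra is normed-generated by its projections.
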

\begin{proof}
(i)$\Rightarrow$(ii) Since $\cl U$ is a masa-bimodule, $ATB\in \cl U$ whenever $A\in \cl D_1$,
$B\in \cl D_2$ and $T\in \cl U$. Since $\cl U$ is weak* closed, condition (iv) of
Theorem \ref{th_schur} implies that $\cl U$ is invariant under all Schur maps.

(ii)$\Rightarrow$(iii) is trivial.

(iii)$\Rightarrow$(i) Let $E$ (resp. $F$) be a projection in $\cl D_1$ (resp. $\cl D_2$).
Then the map given by $\Phi(T) = FTE$, $T\in \cl B(H_1,H_2)$, clearly belongs to $\frak{I}$.
Thus, $F\cl U E\subseteq \cl U$. Since every von Neumann algebra is generated by its projections
in the norm topology,
we conclude that $B\cl U A \subseteq \cl U$ for all $A\in \cl D_1$ and $B\in \cl D_2$.
\end{proof}

\begin{proposition}\label{l_iref}
(i) \ Every $\frak{I}$-injective masa-bimodule is reflexive and is
generated as a weak* closed subspace by the rank one operators it
contains.

(ii) A weak* closed masa-bimodule is reflexive if and only if it is $\frak{I}$-reflexive.
\end{proposition}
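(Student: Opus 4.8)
The plan is to reduce both parts to the support theory of masa-bimodules recalled above. Throughout I write $S_{\chi_E}$ for the Schur idempotent with symbol $\chi_E$, where $E$ is $\omega$-clopen, and use the fact from \cite{kp} (quoted in the excerpt) that every element of $\frak{I}$ is of this form. Part (ii) will turn out to be independent of part (i).

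For part (i), let $\cl U=\ran S_{\chi_\kappa}$ with $\kappa$ $\omega$-clopen. The first step is to identify $\cl U$ with $\frak{M}_{\max}(\kappa)$. One inclusion is immediate: whenever $(\alpha\times\beta)\cap\kappa\cong\emptyset$ we have $M_{\chi_\beta}S_{\chi_\kappa}(T)M_{\chi_\alpha}=S_{\chi_{\kappa\cap(\alpha\times\beta)}}(T)=0$, so every element of $\cl U$ is supported on $\kappa$, i.e. $\cl U\subseteq\frak{M}_{\max}(\kappa)$. For the reverse I would exploit that $\kappa^c$ is again $\omega$-clopen and $S_{\chi_{\kappa^c}}=\id-S_{\chi_\kappa}\in\frak{I}$: if $T\in\frak{M}_{\max}(\kappa)$ then $S_{\chi_{\kappa^c}}(T)=T-S_{\chi_\kappa}(T)$ is supported on $\kappa$ (a difference of operators supported on $\kappa$) while also lying in $\ran S_{\chi_{\kappa^c}}\subseteq\frak{M}_{\max}(\kappa^c)$; being supported on $\kappa\cap\kappa^c\cong\emptyset$ it must vanish, so $T=S_{\chi_\kappa}(T)\in\cl U$. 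This uses only the elementary fact that an operator supported on two $\omega$-closed sets is supported on their intersection. Hence $\cl U=\frak{M}_{\max}(\kappa)$, which is reflexive.

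The second step of part (i) is rank-one generation, equivalently $\cl U\subseteq\frak{M}_{\min}(\kappa)$. Here I would first write $\kappa\cong\bigsqcup_j\gamma_j\times\delta_j$ as a countable \emph{disjoint} union of measurable rectangles (any $\omega$-open set admits such a decomposition, by the usual disjointification). For a fixed rank one $r=\xi\otimes\bar\eta$ (supported on $\supp\eta\times\supp\xi$), the operator $r_j:=(M_{\chi_{\delta_j}}\xi)\otimes\overline{M_{\chi_{\gamma_j}}\eta}$ is a rank one supported on $\gamma_j\times\delta_j\subseteq\kappa$, hence lies in $\frak{M}_{\min}(\kappa)$, and $\sum_{j\le N}r_j=S_{\chi_{K_N}}(r)$ with $K_N=\bigsqcup_{j\le N}\gamma_j\times\delta_j$. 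The crucial point is that, for this fixed $r$, $S_{\chi_{K_N}}(r)\to S_{\chi_\kappa}(r)$ weak*: pairing against an element of the predual $\Gamma(X,Y)$ turns the assertion into the integral of $\chi_{K_N}$ against a fixed $L^1$-function, and since $\chi_{K_N}\uparrow\chi_\kappa$, dominated convergence applies. Thus $S_{\chi_\kappa}(r)\in\frak{M}_{\min}(\kappa)$ for every rank one $r$. Finally, for arbitrary $T\in\cl U$ I would approximate $T$ weak* by finite-rank operators $F_\lambda$; weak* continuity gives $S_{\chi_\kappa}(F_\lambda)\to S_{\chi_\kappa}(T)=T$, while each $S_{\chi_\kappa}(F_\lambda)$ is a finite sum of elements $S_{\chi_\kappa}(r)\in\frak{M}_{\min}(\kappa)$, so $T\in\frak{M}_{\min}(\kappa)$. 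I expect this convergence to be the delicate point: one cannot bound the truncations $S_{\chi_{K_N}}$ uniformly, so the argument must be run one rank-one at a time via dominated convergence rather than at the level of the maps. This yields $\cl U=\frak{M}_{\min}(\kappa)=\frak{M}_{\max}(\kappa)$, and $\frak{M}_{\min}(\kappa)$ is generated by the rank ones supported on $\kappa$, which are exactly the rank ones in $\cl U$.

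For part (ii), since every weak* closed masa-bimodule $\cl U$ is $\frak{I}$-invariant by Proposition \ref{p_mb}, it suffices to prove $\Ref_{\frak{I}}\cl U=\frak{M}_{\max}(\kappa)=\Ref\cl U$, where $\kappa=\supp\cl U$; both equivalences then follow at once. The heart is to determine which $\phi=S_{\chi_E}\in\frak{I}$ annihilate $\cl U$. If $E\cap\kappa\cong\emptyset$, then for $T\in\cl U$ the operator $S_{\chi_E}(T)$ is supported on $E$ (as a member of $\ran S_{\chi_E}$) and on $\kappa$ (because $M_{\chi_\beta}S_{\chi_E}(T)M_{\chi_\alpha}=S_{\chi_E}(M_{\chi_\beta}TM_{\chi_\alpha})=0$ whenever $(\alpha\times\beta)\cap\kappa\cong\emptyset$), hence on $E\cap\kappa\cong\emptyset$, so $S_{\chi_E}(\cl U)=\{0\}$. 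Conversely, if $S_{\chi_E}(\cl U)=\{0\}$ then every $T\in\cl U$ satisfies $T=S_{\chi_{E^c}}(T)\in\frak{M}_{\max}(E^c)$, whence $\frak{M}_{\max}(\kappa)=\Ref\cl U\subseteq\frak{M}_{\max}(E^c)$ and, by monotonicity of supports, $\kappa\subseteq E^c$, i.e. $E\cap\kappa\cong\emptyset$. With this equivalence, $T\in\Ref_{\frak{I}}\cl U$ means precisely that $S_{\chi_E}(T)=0$ for every $\omega$-clopen $E$ with $E\cap\kappa\cong\emptyset$; testing against rectangles $E=\alpha\times\beta$ disjoint from $\kappa$ gives $S_{\chi_{\alpha\times\beta}}(T)=M_{\chi_\beta}TM_{\chi_\alpha}=0$, which is exactly the support condition defining $\frak{M}_{\max}(\kappa)$, while conversely any $T$ supported on $\kappa$ satisfies all these conditions by the forward argument applied to the single operator $T$. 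This gives $\Ref_{\frak{I}}\cl U=\frak{M}_{\max}(\kappa)=\Ref\cl U$. This part is essentially formal once the annihilator computation is made; the only care needed is that rectangles are themselves $\omega$-clopen and so supply admissible members of $\frak{I}$ with which to probe the support of $\cl U$.
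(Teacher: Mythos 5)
Your proof is correct, and in two places it takes a route genuinely different from the paper's. For the reflexivity half of (i) you and the paper argue essentially identically: both exploit that $\id - S_{\chi_\kappa} = S_{\chi_{\kappa^c}}$ lies in $\frak{I}$ and has range inside $\frak{M}_{\max}(\kappa^c)$, and deduce $\ran S_{\chi_\kappa} = \frak{M}_{\max}(\kappa)$ from the two direct sum decompositions of $\cl B(H_1,H_2)$ (your per-operator phrasing is just the paper's ``simple linear algebra argument'' made explicit). For rank-one generation the paper instead invokes \cite[Lemma 3.4]{eks} to produce corners $X_N\times Y_N$ on which $\kappa$ becomes a finite union of rectangles, and then uses $F_NTE_N\rightarrow T$; your disjointification of the countable union plus a dominated-convergence argument applied one rank-one at a time is a valid substitute, and you correctly flag the delicate point, namely that no uniform bound on the truncations $S_{\chi_{K_N}}$ is available, so the convergence must be checked for a fixed rank-one operator paired against a fixed element of $\Gamma(X,Y)$. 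For (ii) the paper proves $\Ref\cl U = \Ref_{\frak{I}}\cl U$ using the Shulman--Turowska description of $\Ref\cl U$ via pairs of masa projections and, for the nontrivial inclusion, the reflexivity of $\ker\Phi = \ran\Phi^{\perp}$ supplied by part (i); you instead classify the annihilating Schur idempotents as exactly those $S_{\chi_E}$ with $E\cap\supp\cl U\cong\emptyset$ and identify both hulls with $\frak{M}_{\max}(\supp\cl U)$. Your version trades the use of part (i) for a heavier reliance on the support calculus (the injectivity and intersection-compatibility of $\kappa\mapsto\frak{M}_{\max}(\kappa)$ from \cite{eks}, \cite{st1}), which is legitimate given the background the paper quotes; the paper's version is shorter because it never needs to name the support of $\cl U$.
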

\begin{proof}
(i)
Let $\cl U$ be a $\frak{I}$-injective masa-bimodule.
Then there exists a subset $\kappa\subseteq X\times Y$ that is both $\omega$-closed and $\omega$-open such that
$\cl U = \ran S_{\chi_{\kappa}}$.
It is easy to see (or, alternatively, it follows from Proposition \ref{p_intr} (iv))
that $\cl U$ is weak* closed.
By \cite[Proposition 12]{kp}, $\cl U\subseteq \frak{M}_{\max}(\kappa)$. Applying the same argument
to $S_{\chi_{\kappa}}^{\perp} = S_{\chi_{\kappa^c}}$, we obtain
$\ran S_{\chi_{\kappa}}^{\perp}\subseteq \frak{M}_{\max}(\kappa^c)$. It follows that
\begin{eqnarray*}
\cl B(H_1,H_2) = \ran S_{\chi_{\kappa}} +
\ran S_{\chi_{\kappa}}^{\perp} \subseteq \frak{M}_{\max}(\kappa) + \frak{M}_{\max}(\kappa^c)
\subseteq \cl B(H_1,H_2)
\end{eqnarray*}
and hence equality holds throughout. Since the sums are direct, a
simple linear algebra argument shows that $\ran S_{\chi_{\kappa}} = \frak{M}_{\max}(\kappa)$
and hence $\cl U$ is reflexive.

Assume that $\kappa = \cup_{i=1}^{\infty}\alpha_i\times\beta_i$,
where $\alpha_i\subseteq X$, $\beta_i\subseteq Y$ are measurable. By
\cite[Lemma 3.4]{eks}, for each $N\in\bb{N}$, there exist sets
$X_N\subseteq X$ and $Y_N\subseteq Y$ such that $m(X\setminus X_N) <
\frac{1}{N}$ and $n(Y\setminus Y_N) < \frac{1}{N}$ and $\kappa\cap
(X_N\times Y_N)$ is contained in the union of finitely many of the
sets $\alpha_i\times\beta_i$. Since a finite union of Borel
rectangles is the finite union of disjoint Borel rectangles and
$\frak{M}_{\max}(\alpha\times\beta) = \cl
B(M_{\chi_{\alpha}}H_1,M_{\chi_{\beta}}H_2)$, we have that
$\frak{M}_{\max}((X_N\times Y_N) \cap \kappa)$ is the weak* closure
of the linear span of its rank one operators. Now let $T\in
\frak{M}_{\max}(\kappa)$ be arbitrary. Then $T =
$w$^*$-$\lim_{N\rightarrow \infty} F_N T E_N$, where $E_N$ (resp.
$F_N$) is the projection of multiplication by $\chi_{X_N}$ (resp.
$\chi_{Y_N}$). Moreover, $F_NTE_N\in \frak{M}_{\max}((X_N\times Y_N)
\cap \kappa)$, and the claim follows.

(ii)
Let $\cl U \subseteq \cl B(H_1,H_2)$ be a weak* closed masa-bimodule.
By \cite{st1},
$\Ref\cl U$ consists of all the operators $T\in \cl B(H_1,H_2)$ with $FTE = 0$ whenever
$E\in \cl D_1$ and $F\in \cl D_2$ are projections with $F\cl U E = \{0\}$.
We claim that
\begin{equation}\label{eq_eqre}
\Ref\cl U = \Ref\mbox{}_{\frak{I}}\cl U.
\end{equation}
Suppose that $T\in \Ref_{\frak{I}}\cl U$ and let
$E\in \cl D_1$ and $F\in \cl D_2$ be projections with $F\cl U E = \{0\}$.
The mapping $\Phi$ on $\cl B(H_1,H_2)$ given by $\Phi(X) = FXE$ clearly belongs to $\frak{I}$
and annihilates $\cl U$.
By the definition of $\Ref_{\frak{I}}$, we have that $\Phi(T) = 0$, that is, $FTE = 0$.
Thus, $T\in \Ref\cl U$.

Conversely, suppose that $T\in \Ref\cl U$ and that $\Phi \in \frak{I}$ annihilates $\cl U$.
By (i), $\ker \Phi = \ran(\Phi^{\perp})$ is reflexive, and hence $\Ref\cl U\subseteq \ker\Phi$.
Thus, $\Phi(T) = 0$ and so $T\in \Ref_{\frak{I}}\cl U$.
Equality (\ref{eq_eqre}) is now established and the conclusion is immediate from it.
\end{proof}

Theorems \ref{th_ai} and \ref{th_int} now yield the following corollaries.

\begin{corollary}\label{c_corm}
Let $\cl V$ be an approximately $\frak{J}$-injective masa-bimodule and $\cl U$ be
a weak* closed masa-bimodule. Then $\cl U + \cl V$ is weak* closed and
$\Ref(\cl U + \cl V) = \Ref(\cl U) + \cl V$. In particular, if
$\cl U$ is reflexive then $\cl U + \cl V$ is reflexive.
\end{corollary}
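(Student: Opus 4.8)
The plan is to recognize Corollary \ref{c_corm} as the concrete instance of Theorem \ref{th_ai} in the Boolean algebra $\frak{I}$ of Schur idempotents, and then to translate the abstract $\frak{I}$-reflexive hull into the usual reflexive hull $\Ref$. (I read the $\frak{J}$ in the statement as $\frak{I}$, the only Boolean algebra of idempotents available here.)

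First I would verify the hypotheses of Theorem \ref{th_ai} with $\cl X = \cl B(H_1,H_2)$, $\frak{C} = \frak{I}$, $\cl Y = \cl U$ and $\cl M = \cl V$. The subspace $\cl U$ is weak* closed by assumption, hence closed in the sense of Section \ref{s_gf}, and it is $\frak{I}$-invariant by Proposition \ref{p_mb}. The subspace $\cl V$, being an approximately $\frak{I}$-injective masa-bimodule, is precisely an approximately $\frak{C}$-injective subspace in the terminology of Definition \ref{d_ai}. Theorem \ref{th_ai}(i) then yields that $\cl U + \cl V$ is weak* closed, and Theorem \ref{th_ai}(ii) yields the identity $\Ref_{\frak{I}}(\cl U + \cl V) = \Ref_{\frak{I}}(\cl U) + \cl V$.

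The remaining, and only genuinely delicate, step is to replace $\Ref_{\frak{I}}$ by $\Ref$. For this I would invoke equation (\ref{eq_eqre}) from the proof of Proposition \ref{l_iref}, which asserts $\Ref\cl W = \Ref_{\frak{I}}\cl W$ for every weak* closed masa-bimodule $\cl W$. This applies at once to $\cl U$. To apply it to $\cl U + \cl V$ I must first confirm that $\cl U + \cl V$ is again a weak* closed masa-bimodule: weak* closedness is exactly the conclusion of Theorem \ref{th_ai}(i) just obtained, and the masa-bimodule property follows because a sum of masa-bimodules is a masa-bimodule, since $B(\cl U + \cl V)A = B\cl U A + B\cl V A \subseteq \cl U + \cl V$ for all $A\in \cl D_1$, $B\in \cl D_2$. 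Combining these observations,
$$\Ref(\cl U + \cl V) = \Ref\mbox{}_{\frak{I}}(\cl U + \cl V) = \Ref\mbox{}_{\frak{I}}(\cl U) + \cl V = \Ref(\cl U) + \cl V,$$
which is the asserted equality.

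The particular case is then immediate: if $\cl U$ is reflexive then $\Ref(\cl U) = \cl U$, so the display gives $\Ref(\cl U + \cl V) = \cl U + \cl V$, i.e.\ $\cl U + \cl V$ is reflexive. I do not expect a serious obstacle; the only point requiring care is the bookkeeping that keeps the hypotheses of both Theorem \ref{th_ai} and equation (\ref{eq_eqre}) in force, in particular the verification that the sum $\cl U + \cl V$ lands back inside the class of weak* closed masa-bimodules for which the identification $\Ref = \Ref_{\frak{I}}$ is available.
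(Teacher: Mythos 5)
Your proposal is correct and follows exactly the route the paper intends: the paper derives this corollary directly from Theorem \ref{th_ai} applied with $\frak{C}=\frak{I}$, using Proposition \ref{p_mb} for the $\frak{I}$-invariance of $\cl U$ and the identity $\Ref = \Ref_{\frak{I}}$ on weak* closed masa-bimodules (equation (\ref{eq_eqre}) in Proposition \ref{l_iref}) to pass from the abstract hull to the usual one. Your extra care in checking that $\cl U + \cl V$ is again a weak* closed masa-bimodule before invoking (\ref{eq_eqre}) is exactly the right bookkeeping.
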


\begin{corollary}\label{c_corad}
Let $\cl V_1,\dots,\cl V_k$ be $\frak{J}$-decomposable masa-bimodules and $\cl U$ be
a weak* closed masa-bimodule. Then
$$\overline{\cl U + \cap_{j=1}^k \cl V_j}^{w^*} = \cap_{j=1}^k \overline{\cl U + \cl V_j}^{w^*}.$$
Moreover, if $\cl U$ is reflexive then $\overline{\cl U + \cap_{j=1}^k \cl V_j}^{w^*}$ is a reflexive
masa-bimodule.
In particular, if $\cl U$ is reflexive then $\overline{\cl U + \cl V_1}^{w^*}$ is reflexive.
\end{corollary}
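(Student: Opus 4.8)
The plan is to specialize the abstract machinery of Section \ref{s_gf} to the concrete Boolean algebra $\frak{I}$ of Schur idempotents acting on $\cl X = \cl B(H_1,H_2)$, and to translate between the operator-theoretic and the abstract vocabulary by means of Propositions \ref{p_mb} and \ref{l_iref}. First I would record that $\frak{I}$ is a Boolean algebra of pairwise commuting weak* continuous idempotents with top element $\id$ and bottom element $0$ --- exactly the hypotheses imposed on $\frak{C}$ in Section \ref{s_gf} --- so that every result proved there applies verbatim with $\frak{C} = \frak{I}$. The dictionary I need is the following: by Proposition \ref{p_mb}, a weak* closed masa-bimodule is precisely a weak* closed $\frak{I}$-invariant subspace, and by Proposition \ref{l_iref}(ii), a weak* closed masa-bimodule is reflexive if and only if it is $\frak{I}$-reflexive. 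I would also note that an $\frak{I}$-decomposable subspace (Definition \ref{d_ad}) is automatically closed and $\frak{I}$-invariant by Remark (ii) following Lemma \ref{l_ade}, so that the phrase \emph{$\frak{I}$-decomposable masa-bimodule} carries no ambiguity.

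With the dictionary in place, the first assertion is immediate. Since $\cl U$ is a weak* closed masa-bimodule it is a closed $\frak{I}$-invariant subspace, and $\cl V_1,\dots,\cl V_k$ are $\frak{I}$-decomposable subspaces; applying Theorem \ref{th_int} with $\cl Y = \cl U$ yields $\overline{\cl U + \cap_{j=1}^k \cl V_j} = \cap_{j=1}^k \overline{\cl U + \cl V_j}$, where the overline denotes weak* closure.

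For the reflexivity claim, suppose $\cl U$ is reflexive. By Proposition \ref{l_iref}(ii), $\cl U$ is $\frak{I}$-reflexive, so the final assertion of Theorem \ref{th_int} gives that $\overline{\cl U + \cap_{j=1}^k \cl V_j}$ is $\frak{I}$-reflexive. To read this back as reflexivity through Proposition \ref{l_iref}(ii), I must check that this space is a weak* closed masa-bimodule, and not merely a subspace. It is weak* closed by construction; moreover $\cl U + \cap_{j=1}^k \cl V_j$ is a sum of $\frak{I}$-invariant subspaces (the $\cl V_j$ being $\frak{I}$-invariant, hence so is their intersection), and the weak* closure of an $\frak{I}$-invariant subspace is again $\frak{I}$-invariant because every element of $\frak{I}$ is weak* continuous. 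Hence Proposition \ref{p_mb} identifies it as a masa-bimodule, and Proposition \ref{l_iref}(ii) then upgrades $\frak{I}$-reflexivity to reflexivity. The final ``in particular'' statement is the special case $k=1$.

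The point worth stressing is that all of the analytic substance has already been absorbed into Theorem \ref{th_int}; what remains is the careful bookkeeping of the translation. I expect the only genuine care to be needed in the round trip through Proposition \ref{l_iref}(ii): Theorem \ref{th_int} delivers $\frak{I}$-reflexivity, and to convert it into reflexivity one cannot skip the verification that the weak* closure is a masa-bimodule, which rests on the weak* continuity of the Schur idempotents. Beyond this, no step should present a serious obstacle.
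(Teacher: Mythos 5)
Your proposal is correct and follows exactly the route the paper intends: the corollary is stated in the paper as an immediate consequence of Theorem \ref{th_int} combined with the dictionary provided by Propositions \ref{p_mb} and \ref{l_iref}(ii), with no further argument given. Your additional check that the weak* closure $\overline{\cl U + \cap_{j=1}^k \cl V_j}^{w^*}$ is itself a masa-bimodule (needed to convert $\frak{I}$-reflexivity back into reflexivity) is a sensible piece of bookkeeping that the paper leaves implicit.
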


The example that follow show some particular instances where
Corollaries \ref{c_corm} and \ref{c_corad} can be applied.

\noindent {\bf Examples (i) } We recall \cite{houston} that a weak*
closed masa-bimodule $\cl M$ is called \emph{ternary} if $\cl M$ is
a TRO, that is, if $TS^*R\in \cl M$ whenever $T,S,R\in \cl M$ (see
\cite{blm}). The class of ternary masa-bimodules includes all von
Neumann algebras with abelian commutant. We claim that every weak*
closed ternary masa-bimodule $\cl M$ is approximately
$\frak{J}$-injective. To see this, note first that one may assume
that $\cl M H_1$ is dense in $H_2$ and $\cl M^*H_2$ is dense in
$H_1$, for otherwise we can replace the spaces $H_1$ and $H_2$ by
$H_2^0 = \overline{\cl M H_1}$ and $H_1^0 = \overline{\cl M^* H_2}$,
respectively. Set $\cl C_1 = (\cl M^*\cl M)'$ and $\cl C_2 = (\cl
M\cl M^*)'$. By \cite{kt}, there exists a strongly continuous
Boolean algebra isomorphism $\theta : \Proj(\cl C_1) \rightarrow
\Proj(\cl C_2)$ (where by $\Proj(\cl C)$ we denote the set of all
orthogonal projections in $\cl C$) such that
$$\cl M = \{T\in \cl B(H,K) : TP = \theta(P)T, \ P\in \Proj(\cl C_1)\}.$$
Let $(P_k)_{k=1}^{\infty}$ be a strongly dense sequence in
$\Proj(\cl C_1)$. Let $E_1,\dots,E_{m_n}$ be the atoms of the von
Neumann algebra generated by $P_1,\dots,P_n$, $F_j = \theta(E_j)$,
and $\cl E_n\in \frak{J}$ be given by $\cl E_n(T) = \sum_j F_j T
E_j$. Then $\|\cl E_n\|\leq 1$ for all $n$, $\ran\cl
E_{n+1}\subseteq \ran \cl E_n$ and $\cl M = \cap_{n=1}^{\infty} \ran
\cl E_n$. Thus, $\cl M$ is approximately $\frak{I}$-injective.

It follows that the class of approximately $\frak{I}$-injective
masa-bimodules is strictly larger than that of $\frak{I}$-injective
ones. Indeed, by the previous paragraph, a continuous masa is
approximately $\frak{I}$-injective, but it is not
$\frak{I}$-injective by a well-known result of Arveson's
\cite{a_ajm}.

In Section \ref{s_syn} we will give an example of an approximately
$\frak{I}$-injective masa-bimodule for which the uniform bound on
the norms of the corresponding idempotents cannot be chosen to be 1.

\smallskip

\noindent {\bf (ii) } Recall that a nest is a totally ordered
strongly closed set of projections on a Hilbert space, and that a
nest algebra is the algebra of all operators leaving a given nest
invariant. A nest algebra bimodule is a subspace $\cl V$ for which
there exists nest algebras $\cl A$ and $\cl B$ with $\cl B \cl V \cl
A\subseteq \cl V$. We claim that every weak* closed nest algebra
bimodule $\cl V$ is $\frak{I}$-decomposable. To see this, write
\cite{ep}
$$\cl V = \{S\in \cl B(H_1,H_2) : SN = \nph(N)SN, \ N\in \cl N_1\},$$
for some nest $\cl N_1\subseteq \cl B(H_1)$ and an increasing
$\vee$-preserving map $\nph : \cl N_1\rightarrow \cl N_2$ ($\cl N_2$
being a nest on $H_2$). For every finite family $\cl F$ consisting
of the elements $0 = N_1 < N_2 < \dots < N_k = I$ of $\cl N$, let
$\cl E_{\cl F}$ be given by $\cl E_{\cl F}(T) = \sum_{i=1}^k
(\nph(N_{i+1}) - \nph(N_i))T(N_{i+1} - N_i)$ and let $\cl M_{\cl F}$
be the range of $\cl E_{\cl F}$. Let
$$\cl W_{\cl F} = \sum_{i < j} (\nph(N_{i+1}) - \nph(N_i))T(N_{j+1}
- N_j).$$

Choose a (countable) strongly dense subset $\{N_i\}_{i\in \bb{N}}$
of $\cl N_1$ such that the set $\{\nph(N_i)\}_{i\in \bb{N}}$ is
dense in $\cl N_2$, and let $\cl P_n = \{0,N_1,N_2,\dots,N_n,I\}$.
The conditions of Definition \ref{d_ad} are now readily verified for
the sequences $(\cl E_{\cl P_n})_{n\in \bb{N}}$ and $(\cl W_{\cl
P_n})_{n\in \bb{N}}$.

It follows from the previous two paragraphs
that the Volterra nest algebra $\cl A$ acting on $L^2(0,1)$
is an $\frak{I}$-decomposable masa-bimodule.
However, it is not approximately $\frak{I}$-injective. To see this, assume the converse and
note that, by Theorem \ref{th_decai} below, we have a direct sum decomposition
$\cl A = \cl A_1 + \cl A_2$ such that $\cl A_1$ is an injective masa-bimodule and
$\cl A\cap \cl K\subseteq \cl A_1$. However, $\cl A\cap \cl K$ is weak* dense in $\cl A$
(see, e.g. \cite{dav}) and it would follow that $\cl A$ is injective.
Hence, the function $\chi_{\Delta}$ where $\Delta = \{(x,y)\in [0,1]\times [0,1] : x \leq y\}$
would be a Schur multiplier on $\cl B(L^2(0,1))$, equivalently, the
transformer of triangular truncation would be bounded, a contradiction.

\medskip

The above examples and Corollaries \ref{c_corm} and \ref{c_corad} have the following consequences.

\begin{corollary}\label{c_tros}
Let $\cl U$ be a weak* closed masa-bimodule and $\cl M$ be a weak* closed ternary masa-bimodule.
Then $\cl U + \cl M$ is weak* closed. If $\cl U$ is reflexive then $\cl U + \cl M$ is reflexive.
\end{corollary}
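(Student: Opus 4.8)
The plan is to deduce Corollary \ref{c_tros} directly from the machinery already in place, reducing the ternary case to the approximately $\frak{I}$-injective case handled by Corollary \ref{c_corm}. The crucial observation is supplied by Examples (i) above: every weak* closed ternary masa-bimodule $\cl M$ is approximately $\frak{I}$-injective. Thus the heart of the argument is not a fresh computation but the invocation of that example, after which the desired conclusions follow immediately from the general results.

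First I would recall from Examples (i) that a weak* closed ternary masa-bimodule $\cl M \subseteq \cl B(H_1,H_2)$ is approximately $\frak{I}$-injective. This is where the real content lies, and it is exactly the step I expect to be the main (already-discharged) obstacle: one reduces to the case $\overline{\cl M H_1} = H_2$ and $\overline{\cl M^* H_2} = H_1$, sets $\cl C_1 = (\cl M^*\cl M)'$ and $\cl C_2 = (\cl M\cl M^*)'$, and uses the result of \cite{kt} to write $\cl M$ as the set of intertwiners for a Boolean algebra isomorphism $\theta : \Proj(\cl C_1)\rightarrow\Proj(\cl C_2)$. Choosing a strongly dense sequence of projections and letting the Schur idempotents $\cl E_n$ act by $\cl E_n(T) = \sum_j F_j T E_j$ (with $F_j = \theta(E_j)$ running over the atoms of the finite algebra generated by the first $n$ projections) yields a uniformly bounded, nested sequence in $\frak{I}$ whose ranges intersect exactly in $\cl M$. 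This exhibits $\cl M$ as approximately $\frak{I}$-injective.

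With this identification in hand, the corollary is immediate. I would apply Corollary \ref{c_corm} with $\cl V = \cl M$ and the given weak* closed masa-bimodule $\cl U$: it yields that $\cl U + \cl M$ is weak* closed and that $\Ref(\cl U + \cl M) = \Ref(\cl U) + \cl M$. The final sentence then follows at once, since if $\cl U$ is reflexive we have $\Ref(\cl U) = \cl U$, whence $\Ref(\cl U + \cl M) = \cl U + \cl M$, so $\cl U + \cl M$ is reflexive.

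In short, the proof is a two-line application once the ternary $\Rightarrow$ approximately $\frak{I}$-injective fact is cited; the only genuine work is that structural fact about TROs, which rests on the intertwiner description from \cite{kt} and the construction of the approximating Schur idempotents. I would therefore keep the written proof brief, pointing to Examples (i) for the approximate $\frak{I}$-injectivity of $\cl M$ and to Corollary \ref{c_corm} for the conclusions about closedness and reflexivity.
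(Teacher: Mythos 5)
Your proposal is correct and follows exactly the route the paper intends: the corollary is stated immediately after the sentence ``The above examples and Corollaries \ref{c_corm} and \ref{c_corad} have the following consequences,'' so the paper's own proof is precisely the combination of Example (i) (ternary $\Rightarrow$ approximately $\frak{I}$-injective, via the intertwiner description from \cite{kt} and the idempotents $\cl E_n(T) = \sum_j F_j T E_j$) with Corollary \ref{c_corm}. Nothing is missing.
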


For the next corollary, we recall that a masa-bimodule of finite width is, by definition,
the intersection of finitely many nest algebra bimodules.

\begin{corollary}\label{c_fw}
Let $\cl U$ be a reflexive masa-bimodule and $\cl V$ be a masa-bimodule of finite width. Then
$\overline{\cl U + \cl V}^{w^*}$ is reflexive. In particular, if $\cl W$ is a nest algebra bimodule
then $\overline{\cl U + \cl W}^{w^*}$ is reflexive.
\end{corollary}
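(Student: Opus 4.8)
The plan is to deduce this directly from Corollary \ref{c_corad} by identifying $\cl V$ as a finite intersection of $\frak{I}$-decomposable masa-bimodules. By definition, a masa-bimodule of finite width is $\cl V = \cap_{j=1}^k \cl W_j$, where each $\cl W_j$ is a weak* closed nest algebra bimodule. The key input is the second Example above, which shows that every weak* closed nest algebra bimodule is $\frak{I}$-decomposable. Thus each $\cl W_j$ in the intersection is $\frak{I}$-decomposable.

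With this identification in hand, I would apply Corollary \ref{c_corad} with $\cl U$ the given reflexive masa-bimodule and $\cl V_j = \cl W_j$ for $j = 1,\dots,k$. That corollary asserts precisely that when $\cl U$ is reflexive and $\cl V_1,\dots,\cl V_k$ are $\frak{I}$-decomposable masa-bimodules, the space $\overline{\cl U + \cap_{j=1}^k \cl V_j}^{w^*}$ is a reflexive masa-bimodule. Since $\cap_{j=1}^k \cl W_j = \cl V$, this yields immediately that $\overline{\cl U + \cl V}^{w^*}$ is reflexive, which is the first assertion.

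For the ``in particular'' clause, I would simply take $k = 1$: a single nest algebra bimodule $\cl W$ is the degenerate case of a masa-bimodule of finite width. Then $\overline{\cl U + \cl W}^{w^*}$ is reflexive by the same application of Corollary \ref{c_corad} (or even by the final sentence of that corollary, which records the $k=1$ case explicitly).

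I do not anticipate a genuine obstacle here, since the substantive content has already been established earlier. The only point requiring a moment of care is verifying that the hypotheses of Corollary \ref{c_corad} are met, namely that the $\cl W_j$ really are $\frak{I}$-decomposable masa-bimodules in the ambient $\frak{C} = \frak{I}$ framework; but this is exactly what the second Example supplies, via the explicit sequences $(\cl E_{\cl P_n})$ and $(\cl W_{\cl P_n})$ that were checked there against Definition \ref{d_ad}. The whole argument is therefore a direct specialization, with the burden of proof carried entirely by the decomposability of nest algebra bimodules and by the intersection theorem underlying Corollary \ref{c_corad}.
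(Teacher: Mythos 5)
Your proposal is correct and is exactly the paper's argument: the paper derives Corollary \ref{c_fw} from Example (ii) of Section \ref{s_amb} (nest algebra bimodules are $\frak{I}$-decomposable) together with Corollary \ref{c_corad}, using the definition of finite width as a finite intersection of nest algebra bimodules. No further comment is needed.
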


\section{Connections with operator synthesis}\label{s_syn}

Let, as in Section \ref{s_amb}, $(X,m)$ and $(Y,n)$ be standard
measure spaces, $H_1 = L^2(X,m)$, $H_2 = L^2(Y,n)$ and $\cl D_1$
(resp. $\cl D_2$) be the multiplication masa of $L^{\infty}(X,m)$
(resp. $L^{\infty}(Y,n)$). We will denote by $\cl K$ (resp. $\cl
C_2$) the ideal of all compact (resp. Hilbert-Schmidt) operators
from $H_1$ into $H_2$. An $\omega$-closed set $\kappa\subseteq
X\times Y$ is called \emph{operator synthetic} \cite{a}, \cite{st1}
if $\frak{M}_{\min}(\kappa) = \frak{M}_{\max}(\kappa)$. A weak*
closed masa-bimodule $\cl U$ will be called \emph{synthetic} if the
(unique up to marginal equivalence) $\omega$-closed subset $\kappa$
such that $\Ref\cl U = \frak{M}_{\max}(\kappa)$ is operator
synthetic.

For an $\omega$-closed set $\kappa\subseteq X\times Y$, we let
$\Psi_{\max}(\kappa) = \frak{M}_{\max}(\kappa)_{\perp}$ and
$\Psi_{\min}(\kappa) = \frak{M}_{\min}(\kappa)_{\perp}$. We have
that $\Psi_{\max}(\kappa)$ and $\Psi_{\min}(\kappa)$ are
$\|\cdot\|_{\Gamma}$-closed subspaces of $\Gamma(X,Y)$, invariant
under pointwise multiplication by Schur multipliers. We say that a
function $h\in \Gamma(X,Y)$ vanishes on a subset $\kappa\subseteq
X\times Y$ (and write \lq\lq $h = 0$ on $\kappa$'') if
$h\chi_{\kappa}(x,y) = 0$ for marginally almost all $(x,y)$. We have
that \cite{st1},
$$\Psi_{\max}(\kappa) = \overline{\{h\in \Gamma(X,Y) : h = 0 \mbox{ on an } \omega\mbox{-open set containing } \kappa\}}^{\|\cdot\|_{\Gamma}}$$
and
$$\Psi_{\min}(\kappa) = \{h\in \Gamma(X,Y) : h = 0 \mbox{ on } \kappa\}.$$
By duality, a subset $\kappa\subseteq X\times Y$ is operator
synthetic if and only if $\Psi_{\max}(\kappa) =
\Psi_{\min}(\kappa)$. The set $\kappa$ is called \emph{strong
operator Ditkin} \cite{st1} if there exists a sequence $(w_n)_{n\in
\bb{N}}$ of Schur multipliers, such that $w_n$ vanishes on an
$\omega$-open set containing $\kappa$, $n\in \bb{N}$, and $\|h - w_n
h\|_{\Gamma}\rightarrow_{n\rightarrow\infty} 0$ for every $h\in
\Psi_{\min}(\kappa)$.

The connection between Question \ref{q_rq} and the problem for the
union of operator synthetic sets is summarised in the following
proposition.

\begin{proposition}\label{p_cws}
Suppose that $\lambda\subseteq X\times Y$ is an operator synthetic
$\omega$-closed set such that $\overline{\cl U +
\frak{M}_{\max}(\lambda)}^{w^*}$ is reflexive whenever $\cl U$ is a
reflexive masa-bimodule. Then $\kappa\cup \lambda$ satisfies
operator synthesis whenever $\kappa$ does so.
\end{proposition}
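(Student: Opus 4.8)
The plan is to identify the weak* closed masa-bimodule $\overline{\frak{M}_{\max}(\kappa) + \frak{M}_{\max}(\lambda)}^{w^*}$ with $\frak{M}_{\max}(\kappa\cup\lambda)$ by exploiting the reflexivity hypothesis, and then to squeeze $\frak{M}_{\min}(\kappa\cup\lambda)$ between $\frak{M}_{\max}(\kappa\cup\lambda)$ and itself. First I would set $\cl W = \overline{\frak{M}_{\max}(\kappa) + \frak{M}_{\max}(\lambda)}^{w^*}$; this is a weak* closed masa-bimodule, being the weak* closure of a sum of masa-bimodules. Since $\frak{M}_{\max}(\kappa)$ is a reflexive masa-bimodule, the standing hypothesis on $\lambda$ applies with $\cl U = \frak{M}_{\max}(\kappa)$ and guarantees that $\cl W$ is reflexive. (Note that synthesis of $\kappa$ is \emph{not} needed here, as $\frak{M}_{\max}(\kappa)$ is reflexive for every $\omega$-closed $\kappa$.)

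Next I would compute the support of $\cl W$. On one hand, both $\frak{M}_{\max}(\kappa)$ and $\frak{M}_{\max}(\lambda)$ are contained in the weak* closed space $\frak{M}_{\max}(\kappa\cup\lambda)$ by the monotonicity of $\frak{M}_{\max}$, so $\cl W\subseteq \frak{M}_{\max}(\kappa\cup\lambda)$ and hence $\Ref\cl W\subseteq \frak{M}_{\max}(\kappa\cup\lambda)$. On the other hand, writing $\mu$ for the support of $\cl W$, so that $\Ref\cl W = \frak{M}_{\max}(\mu)$, the inclusions $\frak{M}_{\max}(\kappa), \frak{M}_{\max}(\lambda)\subseteq \cl W\subseteq \frak{M}_{\max}(\mu)$ force $\kappa\subseteq\mu$ and $\lambda\subseteq\mu$ by the injectivity of the correspondence $\kappa\mapsto\frak{M}_{\max}(\kappa)$ \cite{eks}; thus $\kappa\cup\lambda\subseteq\mu$ and so $\frak{M}_{\max}(\kappa\cup\lambda)\subseteq \Ref\cl W$. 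Therefore $\Ref\cl W = \frak{M}_{\max}(\kappa\cup\lambda)$, and since $\cl W$ is reflexive we conclude $\cl W = \Ref\cl W = \frak{M}_{\max}(\kappa\cup\lambda)$.

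Finally I would invoke synthesis of the two given sets. Because $\kappa$ and $\lambda$ are operator synthetic, $\frak{M}_{\max}(\kappa) = \frak{M}_{\min}(\kappa)$ and $\frak{M}_{\max}(\lambda) = \frak{M}_{\min}(\lambda)$. Using the monotonicity of $\frak{M}_{\min}$ (from \cite[Theorem 3.3]{st1}), both $\frak{M}_{\min}(\kappa)$ and $\frak{M}_{\min}(\lambda)$ lie in the weak* closed space $\frak{M}_{\min}(\kappa\cup\lambda)$, whence
$$\frak{M}_{\max}(\kappa\cup\lambda) = \cl W = \overline{\frak{M}_{\min}(\kappa) + \frak{M}_{\min}(\lambda)}^{w^*}\subseteq \frak{M}_{\min}(\kappa\cup\lambda).$$
Combined with the trivial inclusion $\frak{M}_{\min}(\kappa\cup\lambda)\subseteq \frak{M}_{\max}(\kappa\cup\lambda)$, this yields $\frak{M}_{\min}(\kappa\cup\lambda) = \frak{M}_{\max}(\kappa\cup\lambda)$, i.e.\ $\kappa\cup\lambda$ is operator synthetic.

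I expect the genuinely substantive point to be the support computation in the middle step, and in particular the recognition that the reflexivity hypothesis is exactly what upgrades the a priori inclusion $\cl W\subseteq \frak{M}_{\max}(\kappa\cup\lambda)$ into an equality: without reflexivity one would only know that $\cl W$ has support $\kappa\cup\lambda$, while $\cl W$ itself could be strictly smaller than $\frak{M}_{\max}(\kappa\cup\lambda)$ (this gap is precisely the synthesis phenomenon). The remaining ingredients — monotonicity and injectivity of the assignments $\kappa\mapsto\frak{M}_{\max}(\kappa)$ and $\kappa\mapsto\frak{M}_{\min}(\kappa)$, and the fact that a weak* closed sum of masa-bimodules is again a masa-bimodule — are routine and already recorded in the excerpt.
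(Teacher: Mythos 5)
Your proposal is correct and follows essentially the same route as the paper's proof: identify $\overline{\frak{M}_{\max}(\kappa)+\frak{M}_{\max}(\lambda)}^{w^*}$ with $\frak{M}_{\max}(\kappa\cup\lambda)$ via the reflexivity hypothesis (the paper states the support computation as "easy to see", which you spell out), then use synthesis of $\kappa$ and $\lambda$ together with monotonicity of $\frak{M}_{\min}$ to squeeze $\frak{M}_{\min}(\kappa\cup\lambda)$ against $\frak{M}_{\max}(\kappa\cup\lambda)$. No gaps.
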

\begin{proof}
It is easy to see that for every $\omega$-closed set $\kappa$, the
support of $\frak{M}_{\max}(\kappa) + \frak{M}_{\max}(\lambda)$ is
$\kappa\cup \lambda$. If $\kappa$ satisfies operator synthesis then,
using the fact that $\frak{M}_{\min}$ is monotone, we obtain
\begin{eqnarray*}
\frak{M}_{\max}(\kappa\cup\lambda) & = & \overline{\frak{M}_{\max}(\kappa) + \frak{M}_{\max}(\lambda)}^{w^*} =
\overline{\frak{M}_{\min}(\kappa) + \frak{M}_{\min}(\lambda)}^{w^*}\\ & \subseteq & \frak{M}_{\min}(\kappa\cup\lambda)
\subseteq \frak{M}_{\max}(\kappa\cup\lambda)
\end{eqnarray*}
and hence equality holds throughout. In particular,
$\frak{M}_{\min}(\kappa\cup\lambda) = \frak{M}_{\max}(\kappa\cup\lambda)$, that is, $\kappa\cup\lambda$ is operator synthetic.
\end{proof}

We now discuss some consequences of Proposition \ref{p_cws} and the results from Section \ref{s_amb}.

\medskip

\noindent {\bf Sets of finite width. }
An $\omega$-closed subset $\kappa$ is the support of a nest algebra bimodule if and only if it is of the form
$$\kappa = \{(x,y)\in X\times Y : f(x) \leq g(y)\},$$ for some measurable functions $f : X\rightarrow \bb{R}$ and
$g : Y\rightarrow \bb{R}$; see \cite{t_jlms} (such sets will be
called \emph{nest sets}). It follows that the supports of
masa-bimodules of finite width are precisely the sets of solutions
of systems of inequalities of the form $f_i(x)\leq g_i(y)$, $i =
1,\dots,k$, for some measurable functions $f_i : X\rightarrow
\bb{R}$ and $g_i : Y\rightarrow \bb{R}$, $i = 1,\dots,k$ (call such
sets \emph{of finite width}). It was shown in \cite{st1} and
\cite{t_jlms} that sets of finite width are operator synthetic.
Thus, Proposition \ref{p_cws} and Corollary \ref{c_fw} give the
following extensions of this result.

\begin{corollary}\label{c_ufd}
The union of an operator synthetic set and a set of finite width is operator synthetic.
\end{corollary}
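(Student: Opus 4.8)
The plan is to read the corollary as a direct instance of Proposition \ref{p_cws}, with the only genuine work being to recognise that a set of finite width feeds Corollary \ref{c_fw}. Let $\kappa$ be the operator synthetic $\omega$-closed set and let $\lambda$ be the set of finite width, so that I must show $\kappa\cup\lambda$ is operator synthetic. By Proposition \ref{p_cws}, applied with $\lambda$ playing the role of the distinguished synthetic set, it suffices to verify two properties of $\lambda$: first, that $\lambda$ is itself operator synthetic, and second, that $\overline{\cl U + \frak{M}_{\max}(\lambda)}^{w^*}$ is reflexive for every reflexive masa-bimodule $\cl U$. Once both are in place, Proposition \ref{p_cws} delivers the conclusion verbatim.

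The first property is immediate: the results of \cite{st1} and \cite{t_jlms} quoted just above assert precisely that every set of finite width is operator synthetic. For the second property I would invoke Corollary \ref{c_fw}, whose conclusion is exactly the desired reflexivity of $\overline{\cl U + \cl V}^{w^*}$ when $\cl V$ is a masa-bimodule of finite width. Thus the task reduces to arguing that $\frak{M}_{\max}(\lambda)$ is a masa-bimodule of finite width. Writing $\lambda = \cap_{j=1}^k \kappa_j$ with each $\kappa_j$ a nest set, I would identify $\frak{M}_{\max}(\lambda)$ with $\cap_{j=1}^k \frak{M}_{\max}(\kappa_j)$; since each $\frak{M}_{\max}(\kappa_j)$ is the reflexive nest algebra bimodule supported on the nest set $\kappa_j$, and a masa-bimodule of finite width is by definition an intersection of finitely many nest algebra bimodules, this identification finishes the verification.

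The step I expect to carry the content is therefore the identity $\frak{M}_{\max}(\cap_{j=1}^k \kappa_j) = \cap_{j=1}^k \frak{M}_{\max}(\kappa_j)$. I would obtain it from the support description of $\frak{M}_{\max}$: an operator $T$ lies in $\frak{M}_{\max}(\eta)$ exactly when $\supp(T)$ is contained, up to marginal equivalence, in $\eta$. Hence membership of $T$ in $\cap_{j=1}^k \frak{M}_{\max}(\kappa_j)$ says that $\supp(T)\subseteq \kappa_j$ for every $j$, which is equivalent to $\supp(T)\subseteq \cap_{j=1}^k \kappa_j = \lambda$, i.e. to $T\in \frak{M}_{\max}(\lambda)$. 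With this identity established, $\frak{M}_{\max}(\lambda)$ is of finite width, Corollary \ref{c_fw} supplies the reflexivity hypothesis, and Proposition \ref{p_cws} closes the argument.

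The only subtleties I anticipate are bookkeeping ones rather than conceptual obstacles: keeping track of marginal equivalence throughout the support manipulation, and noting at the outset that $\kappa\cup\lambda$ is again $\omega$-closed (a finite union of $\omega$-closed sets is $\omega$-closed, since $\omega$-open sets are stable under finite intersection). Both are routine in this setting and do not affect the overall structure, so I expect the proof to be short once the finite-width identification is recorded.
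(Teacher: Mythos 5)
Your proposal is correct and follows exactly the route the paper intends: the authors derive the corollary by combining Proposition \ref{p_cws} (with $\lambda$ the finite-width set, which is operator synthetic by \cite{st1} and \cite{t_jlms}) with Corollary \ref{c_fw}. The extra detail you supply — identifying $\frak{M}_{\max}(\lambda)$ with $\cap_{j=1}^k\frak{M}_{\max}(\kappa_j)$ so that it is literally a masa-bimodule of finite width — is a correct filling-in of a step the paper leaves implicit.
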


Let $G$ be a second countable locally compact group and $\omega : G\rightarrow \bb{R}^+$ be a continuous group homomorphism
(where $\bb{R}^+$ is the multiplicative group of positive reals).
We assume that the Fourier algebra $A(G)$ has a (perhaps unbounded) approximate identity
(this assumption is satisfied by all amenable groups and is needed for the application of the results from \cite{lt}).
For a subset $E\subseteq G$, write $E^* = \{(s,t)\in G\times G : st^{-1}\in E\}$.
For each $t > 0$, let
$$E_{\omega}^t = \{x\in G : \omega(x) \leq t\}.$$
Then
$$(E_{\omega}^t)^* = \{(x,y) \in G\times G : \omega(x) \leq t\omega(y)\}$$
and hence $(E_{\omega}^t)^*$ is a nest set.
The intersections of the form
$$E = E_{\omega_1}^{t_1}\cap\dots\cap E_{\omega_k}^{t_k}$$
are Harmonic Analysis versions of sets of finite width; they have
the property that corresponding set $E^*$ is a set of finite width.
By \cite{lt}, a closed set $E$ is synthetic if and only if the set
$E^*$ is operator synthetic (where $G$ is equipped with left Haar
measure). Corollary \ref{c_ufd} thus has the following immediate
consequence:

\begin{corollary}\label{c_ha}
Let $F\subseteq G$ be a closed set satisfying spectral synthesis.
Then $F\cup (E_{\omega_1}^{t_1}\cap\dots\cap E_{\omega_k}^{t_k})$ satisfies spectral synthesis.
\end{corollary}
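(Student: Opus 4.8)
The plan is to deduce Corollary \ref{c_ha} by transporting the spectral-synthesis hypothesis into the operator-synthesis world, applying Corollary \ref{c_ufd} there, and then transporting back. First I would invoke the result of \cite{lt} in the direction that we are \emph{given}: since $F\subseteq G$ is a closed set satisfying spectral synthesis, the set $F^*=\{(s,t)\in G\times G:st^{-1}\in F\}$ is operator synthetic (with respect to left Haar measure on $G$). Next, I would recall the computation already carried out in the excerpt, namely that each $(E_{\omega_i}^{t_i})^*=\{(x,y):\omega_i(x)\leq t_i\omega_i(y)\}$ is a nest set, so that
\[
E^* = \bigl(E_{\omega_1}^{t_1}\cap\dots\cap E_{\omega_k}^{t_k}\bigr)^* = (E_{\omega_1}^{t_1})^*\cap\dots\cap(E_{\omega_k}^{t_k})^*
\]
is an intersection of finitely many nest sets, hence a set of finite width in the sense fixed just before Corollary \ref{c_ufd}. (Here one uses the elementary fact that $(\cdot)^*$ turns intersections into intersections, which is immediate from its definition via the single condition $st^{-1}\in\,\cdot\,$.)

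With these two facts in hand, the heart of the argument is a single application of Corollary \ref{c_ufd}: the union $F^*\cup E^*$, being the union of an operator synthetic set $F^*$ and a set of finite width $E^*$, is itself operator synthetic. It remains to observe that this union is exactly the image under $(\cdot)^*$ of the union $F\cup E$ in $G$; that is,
\[
(F\cup E)^* = F^*\cup E^*,
\]
which again follows directly from the defining condition $st^{-1}\in F\cup E \iff st^{-1}\in F \text{ or } st^{-1}\in E$. Thus $(F\cup E)^*$ is operator synthetic. Finally I would apply \cite{lt} in the reverse direction: since $F\cup E$ is a closed subset of $G$ (a finite union/intersection of closed sets, the $E_{\omega_i}^{t_i}$ being closed by continuity of $\omega_i$) whose associated set $(F\cup E)^*$ is operator synthetic, the set $F\cup E$ satisfies spectral synthesis. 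This is precisely the assertion of the corollary.

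The genuinely substantive content has been pushed into the two black boxes I am allowed to cite: the equivalence in \cite{lt} between spectral synthesis of $E$ and operator synthesis of $E^*$, and Corollary \ref{c_ufd} itself (which in turn rests on Corollary \ref{c_fw} and Proposition \ref{p_cws}). Consequently I do not expect any serious obstacle in the deduction; the only points requiring mild care are verifying that $(\cdot)^*$ commutes with both finite intersections and finite unions, and checking that the standing hypothesis of \cite{lt}—that $A(G)$ possess a (possibly unbounded) approximate identity—is in force, which it is by the blanket assumption made when $G$ was introduced. The remaining bookkeeping is purely formal.
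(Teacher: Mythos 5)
Your argument is correct and is precisely the deduction the paper intends: the paper states Corollary \ref{c_ha} as an immediate consequence of Corollary \ref{c_ufd} together with the transference theorem of \cite{lt}, using exactly the observations that $(\cdot)^*$ commutes with finite unions and intersections and that each $(E_{\omega_i}^{t_i})^*$ is a nest set. You have simply written out the bookkeeping the authors leave implicit.
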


\noindent {\bf Ternary sets. }
An $\omega$-closed subset $\kappa\subseteq X\times Y$ is the support of a
ternary masa-bimodule if and only if it is of the form
$$\kappa = \{(x,y)\in X\times Y : f(x) = g(y)\},$$ for some measurable functions $f : X\rightarrow \bb{R}$ and
$g : Y\rightarrow \bb{R}$; see \cite{vs_ter} and \cite{kt} (we call such sets \emph{ternary}).
Corollary \ref{c_tros} recovers (with a different proof) the following fact, which follows from
\cite[Theorem 7.1]{st1} and \cite[Proposition 5.1]{lt}.

\begin{corollary}\label{c_ust}
The union of an operator synthetic set and a ternary set is operator synthetic.
\end{corollary}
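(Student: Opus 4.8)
The plan is to deduce the statement directly from Proposition \ref{p_cws}, taking $\lambda$ to be the given ternary set and verifying its two hypotheses. Writing $\lambda = \{(x,y)\in X\times Y : f(x) = g(y)\}$ for suitable measurable $f : X\rightarrow \bb{R}$ and $g : Y\rightarrow \bb{R}$, I must check \textbf{(a)} that $\lambda$ is operator synthetic, and \textbf{(b)} that $\overline{\cl U + \frak{M}_{\max}(\lambda)}^{w^*}$ is reflexive whenever $\cl U$ is a reflexive masa-bimodule. Granted both, Proposition \ref{p_cws} yields that $\kappa\cup\lambda$ is operator synthetic whenever the operator synthetic set $\kappa$ is, which is precisely the assertion.

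For \textbf{(a)}, I would observe that a ternary set is a set of finite width: indeed, $\lambda = \{(x,y) : f(x)\leq g(y)\}\cap \{(x,y) : g(y)\leq f(x)\}$ is the intersection of two nest sets. Since sets of finite width are operator synthetic by \cite{st1} and \cite{t_jlms}, the set $\lambda$ is operator synthetic.

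For \textbf{(b)}, the key point is to recognise $\frak{M}_{\max}(\lambda)$ as a ternary masa-bimodule. Since $\lambda$ is ternary, it is the support of some ternary masa-bimodule $\cl M$; and $\cl M$, being approximately $\frak{I}$-injective (Examples (i)), is $\frak{I}$-decomposable and hence $\frak{I}$-reflexive by the Remarks following Lemma \ref{l_ade}, so by Proposition \ref{l_iref} (ii) it is reflexive. Consequently $\cl M = \Ref\cl M = \frak{M}_{\max}(\lambda)$, so that $\frak{M}_{\max}(\lambda)$ is itself a weak* closed ternary masa-bimodule. Now Corollary \ref{c_tros} applies with $\frak{M}_{\max}(\lambda)$ playing the role of the ternary module: for reflexive $\cl U$ the sum $\cl U + \frak{M}_{\max}(\lambda)$ is weak* closed and reflexive, whence it coincides with its own weak* closure and \textbf{(b)} holds.

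The argument is essentially bookkeeping layered on top of the machinery already in place; the one step that needs a moment's care is the identification in \textbf{(b)} of $\frak{M}_{\max}(\lambda)$ with a ternary module, which rests on the reflexivity of ternary masa-bimodules (ultimately on their being approximately $\frak{I}$-injective). I expect no genuine obstacle beyond correctly matching the abstract hypotheses of Proposition \ref{p_cws} to the concrete ternary objects, and checking that this route, via Corollary \ref{c_tros}, indeed recovers the known fact with a proof independent of the direct synthesis arguments of \cite{st1} and \cite{lt}.
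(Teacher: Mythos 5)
Your proof is correct and follows the paper's intended route for this corollary: apply Proposition \ref{p_cws} with $\lambda$ the ternary set, supplying hypothesis (b) via Corollary \ref{c_tros} after identifying $\frak{M}_{\max}(\lambda)$ with a reflexive weak* closed ternary masa-bimodule (which your chain of reductions through approximate $\frak{I}$-injectivity and Proposition \ref{l_iref} justifies). Your verification of hypothesis (a) --- writing $\{(x,y): f(x)=g(y)\}$ as the intersection of the nest sets $\{f(x)\leq g(y)\}$ and $\{-f(x)\leq -g(y)\}$, so that it is of finite width and hence operator synthetic --- is a valid, self-contained substitute for the paper's appeal to the externally cited fact that ternary sets are synthetic.
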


\medskip

If $\kappa$ is the support of an $\frak{I}$-injective masa-bimodule,
then Proposition \ref{l_iref} (i) shows that $\kappa$ is operator
synthetic. Indeed, all Hilbert-Schmidt operators in
$\frak{M}_{\max}(\kappa)$ belong to $\frak{M}_{\min}(\kappa)$ by
\cite{a}, and it follows that $\frak{M}_{\max}(\kappa) =
\frak{M}_{\min}(\kappa)$.

We do not know whether the support of a $\frak{I}$-decomposable
masa-bimodule is necessarily operator synthetic. In the next theorem
we show that whenever it is, it is as a matter of fact strong
operator Ditkin.

\begin{theorem}\label{th_uadm}
Let $\kappa$ be the support of an $\frak{I}$-decomposable masa
bimodule. Then $\kappa$ is operator synthetic if and only if it is
strong operator Ditkin.
\end{theorem}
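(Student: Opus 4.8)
The plan is to prove the nontrivial direction: that if $\kappa$ is operator synthetic, then it is strong operator Ditkin. The reverse implication is essentially definitional, since strong operator Ditkin always implies synthesis (the Ditkin sequence $(w_n)$ witnesses that every $h\in\Psi_{\min}(\kappa)$ lies in the $\|\cdot\|_{\Gamma}$-closure of functions vanishing on $\omega$-open neighbourhoods of $\kappa$, which is exactly $\Psi_{\max}(\kappa)$, forcing $\Psi_{\min}(\kappa)=\Psi_{\max}(\kappa)$). So the work lies entirely in constructing the Ditkin sequence out of the $\frak{I}$-decomposable structure. First I would let $\cl V$ be the $\frak{I}$-decomposable masa-bimodule with support $\kappa$, and fix associated sequences $(\phi_n)_{n=1}^{\infty}\subseteq\frak{I}$ of Schur idempotents and $(\cl W_n)_{n=1}^{\infty}$ of $\frak{I}$-injective subspaces. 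By Theorem \ref{th_schur}, each $\phi_n=S_{\chi_{\kappa_n}}$ for some set $\kappa_n$ that is both $\omega$-open and $\omega$-closed, and similarly each $\cl W_n=\ran S_{\chi_{\lambda_n}}$.

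The key idea is to translate the predual-space arguments of Section \ref{s_gf} into statements about $\Gamma(X,Y)$ and then exploit the hypothesis $\Psi_{\max}(\kappa)=\Psi_{\min}(\kappa)$. The preadjoint of a Schur idempotent $S_{\chi_{\kappa_n}}$ is pointwise multiplication by $\chi_{\kappa_n}$ on $\Gamma(X,Y)$. Conditions (b), (c) and (d) of Definition \ref{d_ad}, dualized via the annihilator correspondence $\cl V\leftrightarrow\cl V_{\perp}$, should give: the predual decomposition controls where $h\in\Psi_{\min}(\kappa)$ can fail to vanish. Concretely, I would take an arbitrary $h\in\Psi_{\min}(\kappa)$, i.e. $h=0$ on $\kappa$, and aim to produce Schur multipliers $w_n$, each vanishing on an $\omega$-open set containing $\kappa$, with $\|h-w_nh\|_{\Gamma}\to 0$. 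The candidate multipliers will be built from the symbols $\chi_{\kappa_n}$ and $\chi_{\lambda_n}$ of the associated idempotents; roughly, $w_n$ should be engineered so that $\chi_{\kappa_n^{c}}$ (the symbol of $\phi_n^{\perp}$) contributes a factor vanishing near $\kappa$, using that $\limsup_n\ran\phi_n\subseteq\cl V$ forces the symbols $\chi_{\kappa_n}$ to concentrate on a neighbourhood of $\kappa$ in the appropriate asymptotic sense.

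The main obstacle I anticipate is the passage from the weak* cluster-point arguments of Lemma \ref{l_ade} and Theorem \ref{th_adr} (which live on the operator side) to a genuine norm-convergent approximation in $\|\cdot\|_{\Gamma}$. On the operator side one extracts cluster points of $(\phi_n(x))$ and only concludes weak* membership in $\cl V$; but the strong operator Ditkin condition demands honest $\|\cdot\|_{\Gamma}$-norm convergence of $w_nh$ to $h$. Bridging this gap is where the synthesis hypothesis must do real work: $\Psi_{\max}(\kappa)=\Psi_{\min}(\kappa)$ should let me replace weak approximation by norm approximation, presumably by applying a Mazur-type argument (passing to convex combinations of the $w_n$, which remain valid Ditkin multipliers since the relevant class is convex) to convert the weak* cluster-point statements into norm convergence. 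I would therefore structure the proof so that the uniform bound from condition (a) guarantees the $w_n$ stay in a fixed multiplier ball, then invoke synthesis together with convexity to upgrade the resulting weak approximation of $h$ to the required norm approximation.
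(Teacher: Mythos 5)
Your overall architecture is right: the Ditkin multipliers do come from the symbols of the associated idempotents (the paper takes $w_n=\chi_{(\kappa_n\cup\sigma_n)^c}$, where $\ran\phi_n=\frak{M}_{\max}(\kappa_n)$ and $\cl W_n=\frak{M}_{\max}(\sigma_n)$), and synthesis enters exactly where you say it should, to pass from $\Psi_{\min}(\kappa)$ to $\Psi_{\max}(\kappa)$. But two things are missing. First, you never actually use the injective pieces $\cl W_n$: conditions (b) and (c) of Definition \ref{d_ad} give $\sigma_n\subseteq\kappa\subseteq\kappa_n\cup\sigma_n$ up to marginal equivalence, and this is what makes $\kappa_n\cup\sigma_n$ an $\omega$-open neighbourhood of $\kappa$ (so that $w_n$ is an admissible Ditkin multiplier) and what kills the middle term in $h=\chi_{\kappa_n}h+\chi_{\sigma_n\cap\kappa_n^c}h+w_nh$, reducing everything to showing $\|\chi_{\kappa_n}h\|_{\Gamma}\to 0$. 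The sets $\kappa_n$ alone need not contain $\kappa$, so $\chi_{\kappa_n^c}$ by itself need not vanish near $\kappa$; your sketch leaves this unresolved.

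Second, and more seriously, the Mazur step does not deliver what the definition demands. Strong operator Ditkin requires a single sequence $(w_n)$ with $\|h-w_nh\|_{\Gamma}\to 0$ for \emph{every} $h\in\Psi_{\min}(\kappa)$, whereas Mazur applied to the weakly null sequence $(\chi_{\kappa_n}h)_n$ produces convex combinations whose coefficients depend on $h$. (The weak nullity itself does hold: condition (d) together with the reflexivity of a decomposable bimodule shows every weak* cluster point of $(\phi_n(T))$ lies in $\frak{M}_{\max}(\kappa)$, which $h$ annihilates once synthesis is invoked; but you only gesture at this.) One could repair the dependence on $h$ by a simultaneous Mazur argument over a countable dense subset of $\Psi_{\min}(\kappa)$ plus the uniform multiplier bound and a diagonalization, but none of that is in your plan. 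The paper avoids the issue entirely by proving genuine norm convergence $\|\chi_{\kappa_n}h\|_{\Gamma}\to 0$ for $h\in\Psi_{\max}(\kappa)$: approximate $h$ by $h_0$ vanishing on an $\omega$-open $E\supseteq\kappa$, use condition (d) and \cite[Proposition 3.5]{ht} to get $\gamma(E^c\cap\kappa_n)\to 0$ for the Haydon--Shulman quantity $\gamma$, cover $E^c\cap\kappa_n$ by thin strips $\alpha_n\times Y$ and $\alpha_n^c\times\beta_n$, and estimate $\|\chi_{\kappa_n}h_0\|_{\Gamma}$ via the uniform bound $C$ on the idempotents. That quantitative step is the real content of the theorem and is absent from your proposal.
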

\begin{proof}
For a subset $E\subseteq X\times Y$, set
$$\gamma(E) = \inf\{m(\alpha) + n(\beta) : E\subseteq (\alpha\times Y)\cup(X\times \beta), \ \alpha,\beta \mbox{ measurable}\}$$
(see R. Haydon and V.S. Shulman's paper \cite{hayshul} where this quantity was defined).

Let $\cl V$ be an $\frak{I}$-decomposable masa-bimodule,
$(\Phi_n)_{n=1}^{\infty}$ be a sequence of elements of $\frak{I}$,
$C > 0$ be a constant with $\|\Phi_n\|\leq C$, $n\in \bb{N}$, and
$(\cl W_n)_{n=1}^{\infty}$ be a sequence of $\frak{I}$-injective
masa-bimodules such that the conditions of Definition \ref{d_ad} are
satisfied. Let $\kappa_n\subseteq X\times Y$ and $\sigma_n\subseteq
X\times Y$ be $\omega$-closed sets with $\ran\Phi_n =
\frak{M}_{\max}(\kappa_n)$ and $\cl W_n =
\frak{M}_{\max}(\sigma_n)$; by \cite{kp}, $\kappa_n$ and $\sigma_n$
are also $\omega$-open. Note that $\Phi_n = S_{\chi_{\kappa_n}}$,
$n\in \bb{N}$.

Let $\kappa\subseteq X\times Y$ be the support of $\cl V$.
By Proposition \ref{p_sin} (i) and (iii), $\ran\Phi_n + \cl W_n$ is reflexive; since its support is
easily seen to be equal to $\sigma_n \cup \kappa_n$, we have that
$\ran\Phi_n + \cl W_n = \frak{M}_{\max}(\sigma_n \cup \kappa_n)$.
Conditions (b) and (c) of Definition \ref{d_ad} imply that, up to a marginally null set,
$$\sigma_n\subseteq \kappa\subseteq \sigma_n \cup \kappa_n, \ \ \ n\in \bb{N}.$$

We claim (without the assumption that $\kappa$ is operator
synthetic) that
\begin{equation}\label{eq_con0}
\Psi_{\max}(\kappa) = \{h\in \Psi_{\min}(\kappa) : \|\chi_{\kappa_n}h\|_{\Gamma}\rightarrow 0\}.
\end{equation}
If $\|\chi_{\kappa_n}h\|_{\Gamma}\rightarrow 0$ for some $h\in \Psi_{\min}(\kappa)$
then $h = \lim_{n\rightarrow\infty} \chi_{\kappa_n^c}h$
and the function $\chi_{\kappa_n^c}h$ vanishes on $\kappa\cup \kappa_n = \sigma_n\cup \kappa_n$,
an $\omega$-open neighbourhood of $\kappa$.
This shows that $h\in \Psi_{\max}(\kappa)$.

Conversely, assume that $h\in \Psi_{\max}(\kappa)$. Given $\epsilon > 0$, there exists
an $\omega$-open set $E$ containing $\kappa$
and an element $h_0\in \Gamma(X,Y)$ vanishing on $E$ such that $\|h - h_0\|_{\Gamma} < \frac{\epsilon}{2C}$.

The sets $E^c\cap \kappa_n$, $n\in \bb{N}$, are $\omega$-closed.
Suppose that $T_n\in \frak{M}_{\max}(E^c\cap \kappa_n)$,
$\|T_n\|\leq 1$, $n\in \bb{N}$, and that $T_n\rightarrow T$ in the
weak operator topology. Since $T_n\in \ran\Phi_n$, we have, by
condition (d) of Definition \ref{d_ad}, that $T\in \cl V =
\frak{M}_{\max}(\kappa)$. On the other hand, $T$ clearly belongs to
$\frak{M}_{\max}(E^c)$ since the latter space is weakly closed and
contains all operators $T_n$, $n\in \bb{N}$. It follows that $T$ is
supported on $E^c\cap \kappa = \emptyset$, and hence $T = 0$. It
follows from \cite[Proposition 3.5]{ht} that $\gamma(E^c\cap
\kappa_n)\rightarrow 0$. Hence we can choose measurable subsets
$\alpha_n\subseteq X$ and $\beta_n\subseteq Y$ such that
$$E^c\cap\kappa_n\subseteq (\alpha_n\times Y)\cup(X\times \beta_n) \ \mbox{ and } \
\lim_{n\rightarrow\infty}m(\alpha_n) = \lim_{n\rightarrow\infty}
n(\beta_n) = 0.$$ Set $E_n = \alpha_n\times Y$ and $F_n =
\alpha_n^c\times \beta_n$. We claim that
\begin{equation}\label{eq_less}
\|\chi_{\kappa_n}h_0\|_{\Gamma} \leq C \|\chi_{E_n}h_0\|_{\Gamma} + C \|\chi_{F_n}h_0\|_{\Gamma}.
\end{equation}
To see this, note that
\begin{eqnarray*}
\|\chi_{\kappa_n} h_0\|_{\Gamma} & = & \sup\{|\langle \chi_{\kappa_n\cap E^c} h_0, T\rangle| : \|T\|\leq 1\}\\
& = &
\sup\{|\langle \chi_{\kappa_n\cap E^c}\chi_{E_n\cup F_n} h_0, T\rangle| : \|T\|\leq 1\}\\
& = & \sup\{|\langle \chi_{\kappa_n} h_0, S_{\chi_{E_n\cup F_n}}(T)\rangle| : \|T\|\leq 1\}\\
& = & \sup\{|\langle h_0, S_{\chi_{\kappa_n} \chi_{E_n\cup F_n}}(T)\rangle| : \|T\|\leq 1\}\\
& = & \sup\{|\langle h_0, S_{\chi_{E_n\cup F_n}}(S_{\chi_{\kappa_n}}(T))\rangle| : \|T\|\leq 1\}\\
& \leq & C\sup\{|\langle h_0, S_{\chi_{E_n\cup F_n} }(T)\rangle| : \|T\|\leq 1\}\\
& \leq & C\sup\{|\langle h_0, S_{\chi_{E_n}}(T)\rangle| : \|T\|\leq 1\}\\
& + & C\sup\{|\langle h_0, S_{\chi_{F_n}}(T)\rangle| : \|T\|\leq 1\}\\
& = & C\sup\{|\langle \chi_{E_n}h_0, T\rangle| : \|T\|\leq 1\}\\
& + & C\sup\{|\langle \chi_{F_n}h_0, T\rangle| : \|T\|\leq 1\}\\
& = & C \|\chi_{E_n}h_0\|_{\Gamma} + C \|\chi_{F_n}h_0\|_{\Gamma}.
\end{eqnarray*}
Thus, (\ref{eq_less}) is established.

We claim that
\begin{equation}\label{eq_limi}
\lim_{n\rightarrow\infty} \|\chi_{E_n}h_0\|_{\Gamma} = \lim_{n\rightarrow\infty} \|\chi_{F_n}h_0\|_{\Gamma} = 0.
\end{equation}
To see this, write $h_0 = \sum_{i=1}^{\infty} f_i\otimes g_i\in \Gamma(X,Y)$ with
$\sum_{i=1}^{\infty} \|f_i\|_2^2 < \infty$ and $\sum_{i=1}^{\infty} \|g_i\|_2^2 < \infty$.
Observe that
$$\int_X \left(\sum_{i=1}^{\infty} |f_i|^2\right)dm = \sum_{i=1}^{\infty} \|f_i\|_2^2 < \infty,$$
that is, $\sum_{i=1}^{\infty} |f_i|^2\in L^1(X,m)$.
Since $\chi_{E_n} h = \sum_{i=1}^{\infty} (\chi_{\alpha_n}f_i) \otimes g_i$, we have that
\begin{eqnarray*}
\|\chi_{E_n}h_0\|_{\Gamma}^2 & \leq & \sum_{i=1}^{\infty}\|\chi_{\alpha_n}f_i\|_2^2 \sum_{i=1}^{\infty}\|g_i\|_2^2\\
& = &  \left(\sum_{i=1}^{\infty}\|g_i\|_2^2\right) \int_{\alpha_n}
\left(\sum_{i=1}^{\infty} |f_i|^2\right)dm
\longrightarrow_{n\rightarrow\infty} 0
\end{eqnarray*}
since $m(\alpha_n)\rightarrow 0$.
Similarly we show that $\|\chi_{F_n}h_0\|_{\Gamma}\rightarrow 0$ and hence
(\ref{eq_limi}) is established.
Inequality (\ref{eq_less}) now implies that
$\|\chi_{E^c\cap \kappa_n} h_0\|_{\Gamma}\rightarrow 0$.
Choosing $n_0$ such that $\|\chi_{E^c\cap \kappa_n} h_0\|_{\Gamma} < \frac{\epsilon}{2}$ for $n\geq n_0$,
we see that
\begin{eqnarray*}
\|\chi_{\kappa_n}h\|_{\Gamma} & \leq & \|\chi_{\kappa_n}(h - h_0)\|_{\Gamma} + \|\chi_{\kappa_n}h_0\|_{\Gamma} \\
& = & \|\chi_{\kappa_n}(h - h_0)\|_{\Gamma} + \|\chi_{E^c\cap\kappa_n}h_0\|_{\Gamma} < \epsilon
\end{eqnarray*}
whenever $n\geq n_0$, which establishes (\ref{eq_con0}).

Now suppose that $\kappa$ is an operator synthetic set and let $h\in \Psi_{\min}(\kappa)$.
We have that
$$h = \chi_{\kappa_n} h + \chi_{\sigma_n\cap \kappa_n^c} h + \chi_{(\kappa_n\cup\sigma_n)^c} h.$$
Since $h$ vanishes on $\kappa$ and $\sigma_n\subseteq \kappa$, we
have that $\chi_{\sigma_n\cap \kappa_n^c} h = 0$. On the other hand,
(\ref{eq_con0}) implies that $\|\chi_{\kappa_n}
h\|_{\Gamma}\rightarrow_{n\rightarrow\infty} 0$. It follows that
$\|h - \chi_{(\kappa_n\cup\sigma_n)^c}
h\|_{\Gamma}\rightarrow_{n\rightarrow\infty} 0$. However,
$\chi_{(\kappa_n\cup\sigma_n)^c}$ is a Schur multiplier vanishing on
the $\omega$-open neighbourhood $\kappa_n\cup \sigma_n$ of $\kappa$.
It follows that $\kappa$ is strong operator Ditkin.
\end{proof}

Since nest algebra bimodules are $\frak{I}$-decomposable, Theorem \ref{th_uadm} yields the following immediate
corollary.

\begin{corollary}\label{c_nestsod}
Every nest set is strong operator Ditkin.
\end{corollary}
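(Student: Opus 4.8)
The plan is to deduce Corollary \ref{c_nestsod} directly from Theorem \ref{th_uadm} by combining two facts already established in the excerpt. First, every nest set is by definition the support of a nest algebra bimodule (this is recorded in the discussion of sets of finite width, where an $\omega$-closed set is shown to be the support of a nest algebra bimodule precisely when it has the form $\{(x,y) : f(x) \leq g(y)\}$). Second, in Example (ii) of Section \ref{s_amb} it was verified that every weak* closed nest algebra bimodule is $\frak{I}$-decomposable. Putting these together, a nest set $\kappa$ is the support of an $\frak{I}$-decomposable masa-bimodule, so Theorem \ref{th_uadm} applies to it verbatim.

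Next I would supply the one remaining input needed to activate Theorem \ref{th_uadm}, namely that $\kappa$ is operator synthetic. This is not something to be reproved here: it is the known result of \cite{st1} and \cite{t_jlms} that sets of finite width (and in particular nest sets, being the $k=1$ case) are operator synthetic, quoted explicitly in the paragraph preceding Corollary \ref{c_ufd}. With operator synthesis of $\kappa$ in hand, the equivalence in Theorem \ref{th_uadm} immediately yields that $\kappa$ is strong operator Ditkin, which is exactly the assertion of the corollary.

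The proof is therefore a short bookkeeping argument chaining together three prior results: the characterization of nest sets as supports of nest algebra bimodules, the $\frak{I}$-decomposability of those bimodules from Example (ii), and the operator synthesis of sets of finite width. I do not anticipate any genuine obstacle, since all the substantive work is done in Theorem \ref{th_uadm} itself; the only point requiring mild care is to confirm that the hypotheses of that theorem are met, i.e. that the relevant bimodule is genuinely $\frak{I}$-decomposable in the precise sense of Definition \ref{d_ad}. Since Example (ii) already carries out the verification of conditions (a)--(d) for an arbitrary weak* closed nest algebra bimodule using the sequences $(\cl E_{\cl P_n})$ and $(\cl W_{\cl P_n})$, this is automatic, and the corollary follows.
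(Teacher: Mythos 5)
Your proposal is correct and follows exactly the paper's route: the paper derives this corollary immediately from Theorem \ref{th_uadm} using the $\frak{I}$-decomposability of nest algebra bimodules established in Example (ii), with the operator synthesis of nest sets (from \cite{st1} and \cite{t_jlms}) as the implicit input that activates the equivalence. You have merely made that implicit input explicit, which is a faithful reconstruction of the intended argument.
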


Theorem \ref{th_uadm} also implies the following fact obtained in \cite{lt}.

\begin{corollary}\label{c_nestsod}
Every ternary set is strong operator Ditkin.
\end{corollary}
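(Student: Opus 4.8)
The plan is to deduce the statement directly from Theorem \ref{th_uadm} by verifying, for an arbitrary ternary set $\kappa$, the two hypotheses appearing in that theorem: that $\kappa$ is the support of an $\frak{I}$-decomposable masa-bimodule, and that $\kappa$ is operator synthetic. Once both are in place, the conclusion that $\kappa$ is strong operator Ditkin is immediate.

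First I would fix a ternary set $\kappa = \{(x,y) : f(x) = g(y)\}$. By the characterization of ternary sets recalled just before Corollary \ref{c_ust}, such a $\kappa$ is precisely the support of some weak* closed ternary masa-bimodule $\cl M$. The first Example following Corollary \ref{c_corad} shows that every weak* closed ternary masa-bimodule is approximately $\frak{I}$-injective, and the first Remark following Lemma \ref{l_ade} shows that every approximately $\frak{I}$-injective masa-bimodule is $\frak{I}$-decomposable. Hence $\kappa$ is the support of an $\frak{I}$-decomposable masa-bimodule, so the standing hypothesis of Theorem \ref{th_uadm} is satisfied.

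Next I would verify that $\kappa$ is operator synthetic. This follows from Corollary \ref{c_ust} applied with the operator synthetic set taken to be the empty set (which is trivially synthetic): the union $\emptyset \cup \kappa = \kappa$ is then operator synthetic. With both hypotheses of Theorem \ref{th_uadm} now established, the theorem yields at once that $\kappa$ is strong operator Ditkin, completing the argument.

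Since the proof is an assembly of results already proved earlier in the paper, I do not anticipate a genuine obstacle. The only point demanding care is the appeal to the operator synthesis of $\kappa$: one must ensure this is available \emph{independently} of the Ditkin conclusion one is trying to reach, so as to avoid circularity. It is indeed available, via Corollary \ref{c_ust} (or, equivalently, the results of \cite{st1} and \cite{lt} cited there), which is proved prior to Theorem \ref{th_uadm}.
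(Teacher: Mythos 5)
Your argument is correct and is essentially the route the paper intends: the paper offers no explicit proof, merely noting that the corollary follows from Theorem \ref{th_uadm}, and the implicit chain is exactly yours — ternary sets are supports of ternary masa-bimodules, which are approximately $\frak{I}$-injective (first Example after Corollary \ref{c_corad}) and hence $\frak{I}$-decomposable, and they are operator synthetic (Corollary \ref{c_ust} or \cite{st1}, \cite{lt}), so Theorem \ref{th_uadm} upgrades this to strong operator Ditkin. Your care about non-circularity in invoking operator synthesis is well placed and correctly resolved.
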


\section{The structure of approximately $\frak{I}$-injective masa-bimodules}\label{s_iam}

In this section, we develop some further operator synthetic
properties of approximately $\frak{I}$-injective masa-bimodules. We
do not know whether the supports of such masa-bimodules are operator
synthetic. However, we show in Theorem \ref{th_decai} below that
$\frak{M}_{\max}(\kappa)$ and $\frak{M}_{\min}(\kappa)$ contain the
same compact operators. Our first aim is to establish a structure
result for approximately $\frak{I}$-injective masa-bimodules
(Theorem \ref{th_decai}). We recall that $\cl K = \cl K(H_1,H_2)$ is
the set of compact operators and $\cl C_2 = \cl C_2(H_1,H_2)$ is the
Hilbert-Schmidt operator ideal; we denote the Hilbert-Schmidt norm
by $\|\cdot\|_2$.

\begin{lemma}\label{l_chsc}
Let $(\cl E_n)_{n\in \bb{N}}$ be a uniformly bounded sequence of Schur idempotents
such that $\ran\cl E_{n+1}\subseteq \ran \cl E_n$, $n\in \bb{N}$.

(i) If $K\in \cl C_2$ the sequence $(\cl E_n(K))_n$ converges in the Hilbert-Schmidt norm.

(ii) If $K\in \cl K$ the sequence $(\cl E_n(K))_n$ converges in the operator norm.
\end{lemma}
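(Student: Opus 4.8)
The plan is to reduce both parts to the behaviour of the idempotents on the Hilbert space $\cl C_2$. First I would record that, as recalled in Section \ref{s_amb}, each $\cl E_n$ is a Schur idempotent and hence equals $S_{\chi_{\kappa_n}}$ for a set $\kappa_n\subseteq X\times Y$ that is both $\omega$-closed and $\omega$-open. The one external fact I would invoke is the standard identification of $\cl C_2 = \cl C_2(H_1,H_2)$ with $L^2(X\times Y, m\times n)$, under which a Schur map $S_\varphi$ acts by pointwise multiplication of the kernel by $\varphi$ (see \cite{a}); thus the restriction of $\cl E_n$ to $\cl C_2$ is the operator of multiplication by $\chi_{\kappa_n}$ on $L^2(X\times Y)$, and in particular $\cl E_n$ maps $\cl C_2$ into $\cl C_2$.

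For (i), the key observation is that this restriction is an \emph{orthogonal} projection on the Hilbert space $\cl C_2$, since $\chi_{\kappa_n}$ is real-valued and idempotent, so multiplication by it on $L^2(X\times Y)$ is self-adjoint and idempotent. Writing $P_n = \cl E_n|_{\cl C_2}$, the hypothesis $\ran\cl E_{n+1}\subseteq\ran\cl E_n$ yields a decreasing sequence $P_1\geq P_2\geq\cdots$ of orthogonal projections. Such a sequence converges strongly: for $m\geq n$ one has $P_nP_m = P_m$, so $P_n - P_m$ is again a projection and $\|P_nK - P_mK\|_2^2 = \|P_nK\|_2^2 - \|P_mK\|_2^2$; as $(\|P_nK\|_2)_n$ is nonincreasing and bounded below it converges, whence $(\cl E_n(K))_n = (P_nK)_n$ is Cauchy in $\|\cdot\|_2$, proving (i).

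For (ii), I would combine (i) with a density argument. The maps $\cl E_n$ are bounded on $\cl B(H_1,H_2)$ with $\|\cl E_n\|\leq C$, and finite-rank operators are dense in $\cl K$ in the operator norm. Given $K\in\cl K$ and $\veps>0$, choose a finite-rank $F$ with $\|K-F\|<\veps$. Then
\begin{equation*}
\|\cl E_n(K)-\cl E_m(K)\| \leq 2C\veps + \|\cl E_n(F)-\cl E_m(F)\|.
\end{equation*}
Since $F\in\cl C_2$, part (i) shows $(\cl E_n(F))_n$ is Cauchy in $\|\cdot\|_2$, and because the operator norm is dominated by the Hilbert-Schmidt norm, it is Cauchy in operator norm as well. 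Letting $n,m\to\infty$ and then $\veps\to 0$ shows $(\cl E_n(K))_n$ is Cauchy in the operator norm; since each $\cl E_n(K)$ is an operator-norm limit of the compact operators $\cl E_n(F)$ and $\cl K$ is complete, the sequence converges to a compact operator.

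The only genuinely delicate point is the first paragraph: justifying that $\cl E_n$ restricts to $\cl C_2$ precisely as multiplication by $\chi_{\kappa_n}$ on kernels, and is therefore an orthogonal projection. Once this identification is in place, part (i) is immediate from the strong convergence of decreasing projections, and part (ii) is a routine $3\veps$-argument riding on (i) together with the inequality $\|\cdot\|\leq\|\cdot\|_2$.
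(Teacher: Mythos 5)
Your proposal is correct and follows essentially the same route as the paper: identify $\cl E_n|_{\cl C_2}$ with multiplication by $\chi_{\kappa_n}$ on $L^2(X\times Y)$, so that the restrictions form a decreasing sequence of orthogonal projections converging strongly, and then deduce (ii) by approximating $K\in\cl K$ by Hilbert--Schmidt operators, using the uniform bound $\|\cl E_n\|\leq C$ and $\|\cdot\|\leq\|\cdot\|_2$. The only cosmetic differences are that you approximate by finite-rank rather than general Hilbert--Schmidt operators and spell out the Cauchy estimate for decreasing projections.
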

\begin{proof}
(i)
Suppose that $\kappa_n\subseteq X\times Y$ is an $\omega$-closed set with $\cl E_n = S_{\chi_{\kappa_n}}$.
It is easy to see that
if $T_{\nph}$ is a Hilbert-Schmidt operator with integral kernel $\nph \in L^2(X\times Y, m\times n)$ then
$\cl E_n(T_{\nph}) = T_{\chi_{\kappa_n}\nph}$. It follows that the sequence $(\cl E_n|_{\cl C_2})_{n\in \bb{N}}$
is a decreasing sequence of orthogonal projections on the Hilbert space $\cl C_2$.
It follows that the sequence $(\cl E_n(T_\nph ))_{n\in \bb{N}}$ converges in the
Hilbert-Schmidt norm for all $\nph \in L^2(X\times Y).$

(ii) Let $K\in \cl K$ and $\epsilon >0.$ There exists $L\in \cl C_2$ such that $\|K-L\|<\frac{\epsilon}{3C}$.
By (i), the sequence $(\cl E_n(L))_n$ converges in $\|\cdot \|_2$ norm, so there exists $n_0$ such that
$$\|\cl E_n(L)-\cl E_m(L)\|_2 < \epsilon/3, \ \ \ n,m\geq n_0.$$
We have
\begin{align*} &\|\cl E_n(K)-\cl E_m(K)\|\\ \leq & \|\cl E_n(K)-\cl E_n(L)\|+ \|\cl E_n(L)-\cl E_m(L)\|+
\|\cl E_m(L)-\cl E_m(K)\| \\ \leq & C\|K-L\|+\|\cl E_n(L)-\cl E_m(L)\|_2+C\|L-K\|< \epsilon,
\end{align*}
for all $n,m\geq n_0$, and the sequence $(\cl E_n(K))_{n\in \bb{N}}$
converges in the operator norm.
\end{proof}

In some of the results that follow, we will use the notion of a
pseudo-integral operator introduced by W. B. Arveson in \cite{a}.
Let $\bb{A}(X,Y) = \bb{A}(X,Y,$ $m,n)$ be the space of Borel
measures $\mu $ on $Y\times X$ of finite total variation for which
there exists a constant $c > 0$ such that
$$|\mu |_X \leq cm \ \mbox{ and } \ |\mu |_Y \leq cn,$$
where $|\mu|$ is the variation of $\mu$ and, for a measure $\nu$ on
$Y\times X$, we denote by $\nu_X$ (resp. $\nu_Y$) the marginal
measure on $X$ (resp. $Y$) given by $\nu_X(\alpha) = \nu(Y\times
\alpha)$ (resp. $\nu_Y(\beta) = \nu(\beta\times X)$). We denote the
smallest constant $c > 0$ with these properties by $\|\mu\|$. To
every $\mu \in \bb{A}(X,Y)$, there corresponds an operator $T_\mu $
satisfying
$$\sca{T_\mu f,g} = \int_\cl {X\times Y}f(x)\overline{g(y)}d\mu(y,x), \ \ \ \ f\in H_1, g\in H_2.$$
The operator $T_{\mu}$ is called the \emph{pseudo-integral operator}
associated with the measure $\mu$. Moreover \cite{tt}, if
$\kappa\subseteq X\times Y$ is an $\omega$-closed set and
$\hat{\kappa} = \{(y,x)\in Y\times X : (x,y)\in \kappa\}$, then
\begin{equation}\label{eq_minp}
\frak{M}_{\min}(\kappa) = \overline{\{T_{\mu} : \mu \mbox{ is supported on } \hat{\kappa}\}}^{w^*}.
\end{equation}

\begin{theorem}\label{th_decai}
Let $\cl M$ be an approximately $\frak{I}$-injective masa-bimodule, and
$\kappa\subseteq X\times Y$ be the $\omega$-closed set with $\cl M = \frak{M}_{\max}(\kappa)$.
There exist an $\frak{I}$-injective masa-bimodule $\cl M_{\rm inj}$ and an
approximately $\frak{I}$-injective masa-bimodule $\cl M_{\rm pai}$ such that
\begin{itemize}
\item[(a)] we have a direct sum decomposition $\frak{M}_{\max}(\kappa) = \cl M_{\rm inj} + \cl M_{\rm pai}$,
\item[(b)] $\cl M_{\rm inj} = \overline{\cl M\cap \cl K}^{w^*}$, and
\item[(c)] $\cl M_{\rm pai}$ contains no non-zero compact operators.
\end{itemize}
Moreover, $\cl M_{\rm inj}$ is the maximal $\frak{I}$-injective masa-bimodule contained in $\cl M$ and
$$\frak{M}_{\min}(\kappa)\cap \cl K = \frak{M}_{\max}(\kappa )\cap \cl K = \overline{
\frak{M}_{\max}(\kappa )\cap \cl C_2} ^{\|\cdot\|}.$$
\end{theorem}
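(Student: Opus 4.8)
The plan is to build, out of an associated sequence of idempotents for $\cl M$, a single Schur idempotent whose range is $\overline{\cl M\cap\cl K}^{w^*}$, and then to split $\cl M$ along this idempotent. Fix an associated sequence $(\cl E_n)_{n=1}^\infty\subseteq\frak I$ for $\cl M$, so that $\|\cl E_n\|\le C$, $\ran\cl E_{n+1}\subseteq\ran\cl E_n$ and $\cl M=\cap_n\ran\cl E_n$; write $\cl E_n=S_{\chi_{\kappa_n}}$ with $\kappa_n$ $\omega$-clopen and nested, so $\kappa\cong\cap_n\kappa_n$. Since $\cl C_2$ is norm dense in $\cl K$, each $\cl E_n$ has operator norm at most $C$, and each $\cl E_n$ preserves $\cl C_2$ (Lemma \ref{l_chsc}(i)), a density argument gives $\cl E_n(\cl K)\subseteq\cl K$. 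By Lemma \ref{l_chsc}(ii) I may therefore define $\Psi\colon\cl K\to\cl K$ by $\Psi(K)=\lim_n\cl E_n(K)$ in operator norm. A routine check, using $\cl E_m\cl E_n=\cl E_n$ for $n\ge m$, shows that $\Psi$ is idempotent with $\|\Psi\|\le C$, that $\Psi$ is a masa-bimodule map (the bimodule identities pass to norm limits), and that $\ran\Psi=\cl M\cap\cl K$.

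The crux is to promote $\Psi$ to a weak* continuous masa-bimodule idempotent on $\cl B(H_1,H_2)$. Identifying $\cl B(H_1,H_2)=\cl K(H_1,H_2)^{**}$, with the weak* topology $\sigma(\cl K^{**},\cl C_1)$ equal to the canonical one, I set $\cl F=\Psi^{**}$. Then $\cl F$ is weak* continuous (a double adjoint always is), idempotent, and agrees with $\Psi$ on the weak* dense subspace $\cl K$; since multiplication by fixed operators of $\cl D_1,\cl D_2$ is weak* continuous, the masa-bimodule identity for $\Psi$ extends by density to $\cl F$. By Theorem \ref{th_schur}, $\cl F$ is a Schur idempotent, i.e. $\cl F\in\frak I$. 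A Goldstine argument gives $\ran\cl F=\overline{\ran\Psi}^{w^*}=\overline{\cl M\cap\cl K}^{w^*}=:\cl M_{\rm inj}$, so $\cl M_{\rm inj}$ is $\frak I$-injective; this is assertion (b) together with the injectivity claim. I regard this bidual step, namely constructing $\cl F$ and identifying its range, as the main obstacle, since everything afterwards is bookkeeping with commuting idempotents.

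For the decomposition, $\cl M_{\rm inj}=\ran\cl F\subseteq\cl M$ and $\cl M$ is $\frak I$-invariant (Proposition \ref{p_mb}), whence $\cl F(\cl M)=\cl M_{\rm inj}$. Setting $\cl M_{\rm pai}=\cl F^{\perp}(\cl M)=\cl M\cap\ran\cl F^{\perp}$, the identity $T=\cl F(T)+\cl F^{\perp}(T)$ for $T\in\cl M$ gives the algebraic sum $\cl M=\cl M_{\rm inj}+\cl M_{\rm pai}$, direct because $\ran\cl F\cap\ran\cl F^{\perp}=\{0\}$; this is (a). Moreover $\cl M_{\rm pai}=\cap_n(\ran\cl E_n\cap\ran\cl F^{\perp})=\cap_n\ran(\cl E_n\cl F^{\perp})$, an intersection of uniformly bounded Schur idempotents with nested ranges, so $\cl M_{\rm pai}$ is approximately $\frak I$-injective. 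If $K\in\cl M_{\rm pai}\cap\cl K$ then $K\in\cl M\cap\cl K$, so $\cl F(K)=\Psi(K)=K$, placing $K$ in $\ran\cl F\cap\ran\cl F^{\perp}=\{0\}$; this is (c). Maximality follows from Proposition \ref{l_iref}(i): any $\frak I$-injective $\cl N\subseteq\cl M$ is the weak* closed span of its rank one (hence compact) operators, which lie in $\cl M\cap\cl K\subseteq\cl M_{\rm inj}$, so $\cl N\subseteq\cl M_{\rm inj}$.

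Finally, for the displayed chain of equalities I argue directly. Since every Hilbert--Schmidt operator in $\frak M_{\max}(\kappa)$ lies in $\frak M_{\min}(\kappa)$ by \cite{a}, and $\frak M_{\min}(\kappa)$ is norm closed, I obtain $\overline{\frak M_{\max}(\kappa)\cap\cl C_2}^{\|\cdot\|}\subseteq\frak M_{\min}(\kappa)\cap\cl K\subseteq\frak M_{\max}(\kappa)\cap\cl K$. For the reverse inclusion take $K\in\frak M_{\max}(\kappa)\cap\cl K=\cl M\cap\cl K$ and norm-approximate it by operators $L_j\in\cl C_2$. Because $\cl E_n(K)=K$, one has $\|\cl E_n(L_j)-K\|=\|\cl E_n(L_j-K)\|\le C\|L_j-K\|$, and letting $n\to\infty$ (Lemma \ref{l_chsc}(i)) yields $\lim_n\cl E_n(L_j)\in\cl M\cap\cl C_2=\frak M_{\max}(\kappa)\cap\cl C_2$ with operator norm within $C\|L_j-K\|$ of $K$. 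Letting $j\to\infty$ gives $K\in\overline{\frak M_{\max}(\kappa)\cap\cl C_2}^{\|\cdot\|}$, so all three spaces coincide.
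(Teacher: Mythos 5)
Your proof is correct and follows essentially the same route as the paper: the norm-limit idempotent $\Psi(K)=\lim_n\cl E_n(K)$ on $\cl K$, its bidual extension to a weak* continuous Schur idempotent $\cl F$, and the splitting of $\cl M$ along $\cl F$. The only real divergence is that you define $\cl M_{\rm pai}=\cl F^{\perp}(\cl M)=\cap_n\ran(\cl E_n\cl F^{\perp})$ directly rather than as $\ran \Phi^{\perp}\cl E$ for a non-weak*-continuous idempotent $\cl E$ onto $\cl M$ as the paper does; the two spaces coincide, but your version makes the approximate $\frak{I}$-injectivity of $\cl M_{\rm pai}$ (asserted in the statement but not verified in the paper's proof) transparent, and your explicit approximation argument for $\frak{M}_{\max}(\kappa)\cap\cl K\subseteq\overline{\frak{M}_{\max}(\kappa)\cap\cl C_2}^{\|\cdot\|}$ usefully replaces the paper's terse appeal to Proposition \ref{l_iref} (i).
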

\begin{proof}
By Lemma \ref{l_chsc} (ii), the limit $\Phi(K) =
\lim_{n\rightarrow\infty} \cl E_n(K)$ exists for every $K\in \cl K$.
The mapping $\Phi : \cl K\rightarrow\cl K$ is clearly linear and,
since the mappings $\cl E_n$ are uniformly bounded, it is bounded.
It is also a masa-bimodule idempotent since the $\cl E_n$'s are
such. Passing to the second dual, we obtain an extension (denoted in
the same way) $\Phi : \cl B(H_1,H_2)\rightarrow\cl B(H_1,H_2)$ which
is a Schur idempotent. By paragraph 1.6.1 of \cite{blm}, there
exists a bounded (not necessarily weak* continuous) idempotent $\cl
E$ such that $\ran \cl E = \cl M$. Set $\cl M_{\rm inj} = \ran\Phi$
and $\cl M_{\rm pai} = \ran \Phi^{\perp}\cl E$. Since $\cl E =
\Phi\cl E + \Phi^{\perp}\cl E = \Phi + \Phi^{\perp}\cl E$, we have a
direct sum decomposition $\cl M = \cl M_{\rm inj} + \cl M_{\rm
pai}$. If $K \in \cl M\cap \cl K$ then $\cl E_n(K) = K$ for all $K$
and hence $K = \Phi(K)\in \cl M_{\rm inj}$. Conversely, if $T\in \cl
M_{\rm inj}$, let $T = $w$^*$-$\lim K_i$, for some net $(K_i)$ of
compact operators. But then $T = $w$^*$-$\lim \Phi(K_i) \in \cl
M\cap \cl K$, and (b) follows. Finally, if $K\in \cl M_{\rm pai}\cap
\cl K$ then $K = \Phi^{\perp}\cl E(K) = \cl E\Phi^{\perp}(K) = 0$,
and (c) follows.

Suppose that $\cl M_0$ is an $\frak{I}$-injective masa-bimodule
contained in $\cl M$. By (b) and Proposition \ref{l_iref} (i),
$$\cl M_0\subseteq \overline{\cl M_0\cap \cl K}^{w^*}\subseteq \overline{\cl M\cap \cl K}^{w^*} = \cl M_{\rm inj},$$
hence $\cl M_{\rm inj}$ is a maximal $\frak{I}$-injective masa-bimodule contained in $\cl M$.

The inclusion $\frak{M}_{\min}(\kappa)\cap \cl K= \frak{M}_{\max}(\kappa )\cap \cl K$ is trivial,
while the inclusion
$\frak{M}_{\max}(\kappa )\cap \cl K = \overline{\frak{M}_{\max}(\kappa )\cap \cl C_2} ^{\|\cdot\|}$
follows from Proposition \ref{l_iref} (i).
Suppose that $K\in \frak{M}_{\max}(\kappa )\cap \cl C_2$; then
$K$ is a pseudo-integral operator \cite{a} and hence, by (\ref{eq_minp}), belongs to
$\frak{M}_{\min}(\kappa )\cap \cl K$.
Since the latter space is norm closed, the equalities follow.
\end{proof}

\noindent {\bf Remarks (i) } The subscript of $\cl M_{\rm pai}$
stands for \lq\lq purely approximately $\frak{I}$-injective''. Note that $\cl
M_{\rm pai}$ does not contain a non-zero $\frak{I}$-injective masa-bimodule.
Examples of such masa-bimodules include the ternary masa-bimodules containing no
non-zero rank one operators, in particular continuous masas.

\smallskip

\noindent {\bf (ii) } It follows from Theorem \ref{th_decai} that
an approximately $\frak{I}$-injective masa-bimodule $\cl M$ is injective if and only if
$\overline{\cl M\cap \cl K}^{w^*} = \cl M$. Indeed, one direction follows from Proposition \ref{l_iref} (i); to see the
other, assume that $\overline{\cl M\cap \cl K}^{w^*} = \cl M$ and let $\cl K\ni K_n\rightarrow_{w^*} T\in \cl M_{\rm pai}$.
Letting $\Phi$ be the Schur idempotent with range $\cl M_{\rm inj}$, we have $\Phi^{\perp}(K_n)\rightarrow T$.
However, $\Phi^{\perp}(K_n)\in \cl K\cap \cl M_{\rm pai} = \{0\}$, and hence $T = 0$.

\smallskip

\noindent {\bf (iii) } We note that $\frak{I}$-decomposable
masa-bimodules do not in general contain a maximal $\frak{I}$-injective
masa-bimodule.
For an example, let $\cl A$ be the Volterra nest algebra
acting on $L^2(0,1)$. Then $\cl A = \frak{M}_{\max}(\Delta)$, where $\Delta = \{(x,y)\in [0,1]\times [0,1] : x \leq y\}$.
Let $\Delta_0 = \{(x,y)\in [0,1]\times [0,1] : x < y\}$; then $\Delta_0$ is an $\omega$-open set
which contains every measurable rectangle $\alpha\times\beta$ marginally contained in $\Delta$. Suppose that
$\cl M\subseteq \cl A$ is an $\frak{I}$-injective masa-bimodule, say $\cl M = \frak{M}_{\max}(\kappa)$,
for some $\omega$-open and $\omega$-closed set $\kappa\subseteq [0,1]\times [0,1]$.
It follows that, up to marginal equivalence, $\kappa\subseteq \Delta_0$.
Moreover, $\kappa$ is not marginally equivalent to $\Delta_0$ since $\Delta_0$ is not $\omega$-closed.
But then $\kappa^c\cap \Delta_0$ is a non-marginally null $\omega$-open set and therefore
contains a measurable rectangle $\alpha\times\beta$. It is easy to see that
$\cl M + \frak{M}_{\max}(\alpha\times\beta)$ is an $\frak{I}$-injective masa-bimodule; clearly, it
contains properly $\cl M$. We thus showed that $\cl A$ does not contain a maximal $\frak{I}$-injective
masa-bimodule.

\medskip

\noindent {\bf Example } We present an example of an approximately
$\frak{I}$-injective, but not $\frak{I}$-injective, masa bimodule
$\cl U$, for which the uniform bound for the norms of any sequence
of Schur idempotents with decreasing ranges whose intersection is
$\cl U$ can not be chosen to be smaller than $\frac{2}{\sqrt{3}}$.
Let $\cl M$ and $\cl N$ be weak* closed ternary masa-bimodules, and
let $(\Phi_n)_{n\in \bb{N}}$ (resp. $(\Psi_n)_{n\in \bb{N}}$) be a
sequence of Schur idempotents of norm one with decreasing ranges
such that $\cap \ran\Phi_n = \cl M$ (resp. $\cap \ran\Psi_n = \cl
N$). Let $\Theta_n = \Phi_n + \Psi_n - \Phi_n\Psi_n$, $n\in \bb{N}$.
Then $\Theta_n$ is a Schur idempotent with $\|\Theta_n\|\leq 2$,
$n\in \bb{N}$. We claim that $\cap \ran\Theta_n =  \cl M + \cl N$.
Indeed, write $\cl M_n = \ran\Phi_n$, $\cl N_n = \ran\Psi_n$ and
suppose that $T\in \cap_{n\in \bb{N}} \ran\Theta_n$. For each $n\in
\bb{N}$, write $T = X_n + Y_n$ with $X_n \in \cl M_n$ and $Y_n\in
\cl N_n$. Choose a subsequence $(n_k)_{k\in \bb{N}}$ such that
$\Phi_{n_k}(T) = X_{n_k} + \Phi_{n_k}(Y_{n_k})$ converge weak* along
$k\in \bb{N}$ to an operator $X$. Clearly, $X\in \cl M$. Since $\cl
N$ is invariant under Schur maps, $T - (X_{n_k} +
\Phi_{n_k}(Y_{n_k})) = Y_{n_k} - \Phi_{n_k}(Y_{n_k})$ converge
weak*, along $k\in \bb{N}$, to an operator $Y$ in $\cl N$. Thus, $T
= X + Y\in  \cl M + \cl N$. We showed that $\cap_{n\in \bb{N}}
\ran\Theta_n\subseteq \cl M + \cl N$; the converse inclusion is
trivial.

Now suppose additionally that $\cl M$ is $\frak{I}$-injective while $\cl N$ is not,
$\cl M\cap \cl N = \{0\}$, and $\cl M + \cl N$ is not a TRO (for example, let $H = L^2(0,1)$,
$P$ be the projection onto $L^2(0,\frac{1}{2})$, $\cl M = \cl B(P(H)^\bot,P(H)))$ and
$\cl N$ be the multiplication masa of $L^{\infty}(0,1)$).
From the first paragraph, $\cl M + \cl N$ is an approximately $\frak{I}$-injective masa-bimodule.
Since $\cl N$ is not $\frak{I}$-injective, it does not contain non-zero compact operators and we see that
$\cl M$ is the $\frak{I}$-injective part of $\cl M + \cl N$, while $\cl N$ is its purely approximately $\frak{I}$-injective part.
It follows that $\cl M + \cl N$ is not $\frak{I}$-injective. We claim that for every sequence
$(\cl E_n)_{n\in \bb{N}}$ of Schur idempotents with $\cap_{n\in \bb{N}} \ran\cl E_n = \cl M + \cl N$,
we have that $\|\cl E_n\| \geq \frac{2}{\sqrt{3}}$ eventually. Indeed, if not then, by \cite{kp},
$\ran\cl E_n$ would be a TRO for infinitely many $n$, and hence $\cl M + \cl N$ would be a TRO, contradicting
our assumption.

\smallskip

Since approximately $\frak{I}$-injective masa-bimodules are
$\frak{I}$-decomposable, Theorem \ref{th_uadm} implies that the
support of an approximately $\frak{I}$-injective masa-bimodule is
operator synthetic if and only if it is strong operator Ditkin. In
Theorem \ref{th_syn}, we give a more precise statement for this
special case. We need a couple of preliminary statements.

\begin{lemma}\label{l_schurps}
Let $w : X\times Y\rightarrow \bb{C}$ be a bounded measurable function and $\mu \in \bb{A}(X,Y)$.
Then the measure $w\mu$ given by $w\mu(E) = \int_E w(x,y)d\mu(y,x)$ belongs to $\bb{A}(X,Y)$.
\end{lemma}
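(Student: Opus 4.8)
The plan is to verify directly that $w\mu$ meets the two defining conditions of $\bb{A}(X,Y)$, obtaining along the way the norm bound $\|w\mu\|\leq \|w\|_{\infty}\|\mu\|$. The only analytic input is the standard fact that integrating a bounded measurable density against a finite complex measure yields a measure whose total variation is controlled pointwise by the density.

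First I would check that $w\mu$ is a genuine complex Borel measure on $Y\times X$ of finite total variation. Reading $w$ as the bounded Borel function $(y,x)\mapsto w(x,y)$ on $Y\times X$, it is $|\mu|$-integrable because $|\mu|(Y\times X)<\infty$; hence the set function $E\mapsto \int_E w\,d\mu$ is countably additive by dominated convergence, and so defines a complex Borel measure.

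The key estimate is the inequality $|w\mu|\leq \|w\|_{\infty}|\mu|$ of positive measures on $Y\times X$. To establish it, I fix a Borel set $E$ and a measurable partition $E=\bigsqcup_i E_i$, apply the elementary bound $\left|\int_{E_i} w\,d\mu\right|\leq \int_{E_i}|w|\,d|\mu|$ (which follows from the polar decomposition $d\mu = h\,d|\mu|$ with $|h|=1$), and sum over $i$ to get $\sum_i |w\mu(E_i)|\leq \int_E |w|\,d|\mu|\leq \|w\|_{\infty}|\mu|(E)$. Taking the supremum over all partitions gives $|w\mu|(E)\leq \|w\|_{\infty}|\mu|(E)$. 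In particular $|w\mu|(Y\times X)\leq \|w\|_{\infty}|\mu|(Y\times X)<\infty$, so $w\mu$ has finite total variation.

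Finally, the two marginal bounds follow by specializing the key estimate to the relevant rectangles. For measurable $\alpha\subseteq X$ and $\beta\subseteq Y$,
$$|w\mu|_X(\alpha)=|w\mu|(Y\times\alpha)\leq \|w\|_{\infty}|\mu|(Y\times\alpha)=\|w\|_{\infty}|\mu|_X(\alpha)\leq \|w\|_{\infty}\|\mu\|\,m(\alpha),$$
and symmetrically $|w\mu|_Y(\beta)\leq \|w\|_{\infty}\|\mu\|\,n(\beta)$, where the last inequalities use $|\mu|_X\leq \|\mu\| m$ and $|\mu|_Y\leq \|\mu\| n$ coming from $\mu\in\bb{A}(X,Y)$. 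Hence $w\mu\in\bb{A}(X,Y)$ with $\|w\mu\|\leq \|w\|_{\infty}\|\mu\|$. There is no serious obstacle here; the only point demanding a little care is the coordinate bookkeeping between $w$ on $X\times Y$ and $\mu$ on $Y\times X$, which is dealt with once by reading $w$ as the function $(y,x)\mapsto w(x,y)$.
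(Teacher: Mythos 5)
Your proof is correct and follows essentially the same route as the paper's: both establish the pointwise domination $|w\mu|\leq \|w\|_{\infty}|\mu|$ by estimating $\sum_i|w\mu(E_i)|$ over partitions of a Borel set and then read off the marginal bounds. Your version is slightly more careful in justifying $|w\mu(E_i)|\leq\int_{E_i}|w|\,d|\mu|$ via the polar decomposition and in checking countable additivity, but the substance is identical.
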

\begin{proof}
For a measurable subset $E\subseteq Y\times X$ we have
(the supremum being taken over all partitions $E = \cup_{i=1}^k E_i$)
\begin{eqnarray*}
|w\mu|(E) & = & \sup \sum_{i=1}^k |w\mu(E_i)| = \sum_{i=1}^k \left|\int_{E_i} w(x,y)d\mu(y,x)\right|\\
& \leq & \|w\|_{\infty} \sum_{i=1}^k |\mu|(E_i) = \|w\|_{\infty}
|\mu|(E).
\end{eqnarray*}
Thus, $|w\mu|\leq \|w\|_{\infty} |\mu|$ and hence $|w\mu|_X\leq \|w\|_{\infty} |\mu|_X$ and
$|w\mu|_Y\leq \|w\|_{\infty} |\mu|_Y$. Since $\mu\in \bb{A}(X,Y)$, we have that $w\mu\in \bb{A}(X,Y)$.
\end{proof}

\begin{proposition}\label{p_cuts}
Let $\sigma\subseteq X\times Y$ be an operator synthetic set and $\kappa\subseteq X\times Y$
be an $\omega$-closed set such that $\chi_{\kappa}$ is a Schur multiplier. Then $\sigma\cap \kappa$ is
operator synthetic.
\end{proposition}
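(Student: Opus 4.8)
The plan is to pass to the predual and verify the equivalent criterion $\Psi_{\max}(\sigma\cap\kappa) = \Psi_{\min}(\sigma\cap\kappa)$. Since the inclusion $\frak{M}_{\min}(\lambda)\subseteq\frak{M}_{\max}(\lambda)$ always holds, taking preannihilators gives $\Psi_{\max}(\lambda)\subseteq\Psi_{\min}(\lambda)$ for every $\omega$-closed $\lambda$, so the entire content is the reverse inclusion $\Psi_{\min}(\sigma\cap\kappa)\subseteq\Psi_{\max}(\sigma\cap\kappa)$. The structural observation I would record first is that, since $\chi_{\kappa}$ is a Schur multiplier and $\chi_{\kappa}^2 = \chi_{\kappa}$, the map $S_{\chi_{\kappa}}$ is a Schur idempotent; by \cite{kp} the set $\kappa$ is therefore \emph{both} $\omega$-open and $\omega$-closed, and hence so is $\kappa^c$, with $\chi_{\kappa^c} = 1 - \chi_{\kappa}$ again a Schur multiplier. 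This two-sidedness of $\kappa$ is what makes the argument work.

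Next I would fix $h\in\Psi_{\min}(\sigma\cap\kappa)$, that is, $h\in\Gamma(X,Y)$ with $h = 0$ on $\sigma\cap\kappa$, and split $h = \chi_{\kappa}h + \chi_{\kappa^c}h$; both summands lie in $\Gamma(X,Y)$ because pointwise multiplication by a Schur multiplier is bounded on $\Gamma(X,Y)$. I then treat the two pieces separately. The term $\chi_{\kappa^c}h$ vanishes on $\kappa$, which is an $\omega$-open set containing $\sigma\cap\kappa$; hence it already belongs to the space of functions vanishing on an $\omega$-open neighbourhood of $\sigma\cap\kappa$, so $\chi_{\kappa^c}h\in\Psi_{\max}(\sigma\cap\kappa)$. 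For the term $\chi_{\kappa}h$, note that it vanishes on $\kappa^c$ as well as on $\sigma\cap\kappa$, hence on $(\sigma\cap\kappa)\cup\kappa^c = \sigma\cup\kappa^c\supseteq\sigma$. Thus $\chi_{\kappa}h\in\Psi_{\min}(\sigma)$, and since $\sigma$ is operator synthetic we obtain $\chi_{\kappa}h\in\Psi_{\max}(\sigma)$.

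The remaining, and principal, step is to upgrade $\chi_{\kappa}h\in\Psi_{\max}(\sigma)$ to $\chi_{\kappa}h\in\Psi_{\max}(\sigma\cap\kappa)$, and this is where the hypothesis on $\kappa$ is used essentially. By definition of $\Psi_{\max}(\sigma)$ I would choose $h_j\in\Gamma(X,Y)$ vanishing on $\omega$-open sets $U_j\supseteq\sigma$ with $\|h_j - \chi_{\kappa}h\|_{\Gamma}\to 0$. Multiplying by $\chi_{\kappa}$ and using boundedness of this Schur multiplier together with $\chi_{\kappa}^2 = \chi_{\kappa}$ gives $\|\chi_{\kappa}h_j - \chi_{\kappa}h\|_{\Gamma}\to 0$. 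Each $\chi_{\kappa}h_j$ vanishes on $U_j\cup\kappa^c$, which is $\omega$-open (here I use that $\kappa^c$ is $\omega$-open) and contains $\sigma\cap\kappa\subseteq\sigma\subseteq U_j$; hence every $\chi_{\kappa}h_j$ lies in the defining set of $\Psi_{\max}(\sigma\cap\kappa)$, and since that space is $\|\cdot\|_{\Gamma}$-closed, so does the limit $\chi_{\kappa}h$. Combining the two pieces yields $h = \chi_{\kappa}h + \chi_{\kappa^c}h\in\Psi_{\max}(\sigma\cap\kappa)$, the required inclusion. The one place demanding care is precisely this last transfer through $\chi_{\kappa}$: it succeeds because $\chi_{\kappa}$ is a \emph{multiplier} (so it acts boundedly and preserves $\Gamma$-convergence) and because $\kappa^c$ is itself $\omega$-open (so adjoining it to each $U_j$ keeps the exceptional neighbourhoods $\omega$-open), both guaranteed by the standing assumption that $\chi_{\kappa}$ is a Schur multiplier.
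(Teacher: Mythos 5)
Your argument is correct, and it takes a genuinely different route from the paper's. You work entirely in the predual $\Gamma(X,Y)$, using the decomposition $h=\chi_{\kappa}h+\chi_{\kappa^c}h$ together with the two facts that $\kappa$ is $\omega$-clopen and that multiplication by $\chi_{\kappa}$ is bounded and idempotent on $\Gamma(X,Y)$; the only nontrivial input is the synthesis of $\sigma$, applied to $\chi_{\kappa}h\in\Psi_{\min}(\sigma)$, followed by the transfer $\chi_{\kappa}\Psi_{\max}(\sigma)\subseteq\Psi_{\max}(\sigma\cap\kappa)$ obtained by adjoining the $\omega$-open set $\kappa^c$ to the exceptional neighbourhoods. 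The paper instead argues on the operator side: it first proves, via Lemma \ref{l_schurps}, an exhaustion $\kappa_N=\kappa\cap(X_N\times Y_N)$ by finite unions of rectangles and dominated convergence, the identity $S_{\chi_{\kappa}}(T_{\mu})=T_{\chi_{\kappa}\mu}$ for pseudo-integral operators, and then pushes a net $T_{\mu_\alpha}\to T$ of pseudo-integral operators supported on $\hat\sigma$ (supplied by synthesis of $\sigma$ via (\ref{eq_minp})) through $S_{\chi_{\kappa}}$ to land in $\frak{M}_{\min}(\sigma\cap\kappa)$. Your version is shorter and avoids the measure-theoretic machinery entirely, at the cost of the usual care needed when multiplying elements of $\Gamma(X,Y)$ by characteristic functions and reading off where the products vanish marginally almost everywhere --- manipulations the paper itself performs in the proof of Theorem \ref{th_uadm}, so they are at the same level of rigour as the surrounding text. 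What the paper's route buys in exchange is the identity $S_{\chi_{\kappa}}(T_{\mu})=T_{\chi_{\kappa}\mu}$, which is of independent use alongside the pseudo-integral calculus developed for Proposition \ref{genarator}; your route buys a self-contained predual proof that makes transparent exactly which properties of $\kappa$ are used and where.
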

\begin{proof}
Since $\chi_{\kappa}$ is a Schur multiplier, $\kappa$ is
$\omega$-open \cite{kp}; assume, without loss of generality, that
$\kappa = \cup_{i=1}^{\infty} \alpha_i\times \beta_i$, where
$\alpha_i\subseteq X$ and $\beta_i\subseteq Y$ are measurable. As in
the proof of Proposition \ref{l_iref} (i), let $(X_N)_{N\in \bb{N}}$
(resp. $(Y_N)_{N\in \bb{N}}$) be an increasing sequence of
measurable subsets of $X$ (resp. $Y$) such that $m(X\setminus X_N) <
1/N$, $n(Y\setminus Y_N) < 1/N$, and $\kappa_N \stackrel{def}{=}
\kappa\cap (X_N\times Y_N)$ is contained in the union of finitely
many of the sets $\alpha_i\times \beta_i$. Write $\kappa_N =
\cup_{j=1}^{k_N} \gamma_j^N \times \delta_j^N$, as a disjoint union,
where each $\gamma_j^N\times\beta_j^N$ is contained in some
$\alpha_i\times \beta_i$. Let $h = f\otimes g \in \Gamma(X,Y)$ be an
elementary tensor, where $f\in L^{\infty}(X,m)$ and $g\in
L^{\infty}(Y,n)$. Then $\|h - (\chi_{X_N}f) \otimes
(\chi_{Y_N}g)\|_{\Gamma}\rightarrow_{N\rightarrow \infty} 0$.

Let $\mu\in \bb{A}(X,Y)$. Then $\chi_{\kappa_N}(x,y)\rightarrow \chi_{\kappa}(x,y)$ for
all $(x,y) \in (\cup_{N=1}^{\infty} X_N)\times (\cup_{N=1}^{\infty} Y_N)$. Since
$((\cup_{N=1}^{\infty} X_N)\times (\cup_{N=1}^{\infty} Y_N))^c$ is marginally null, we have that it is
$\mu$-null, and hence $\chi_{\kappa_N}(x,y)\rightarrow \chi_{\kappa}(x,y)$ for $\mu$-almost
all $(x,y) \in X\times Y$. Using Lebesgue's Dominated Convergence Theorem and Lemma \ref{l_schurps}, we have that
\begin{eqnarray*}
\langle S_{\chi_{\kappa}}(T_{\mu}),h\rangle
& = & \lim_{N\rightarrow \infty} \langle S_{\chi_{\kappa}}(T_{\mu}),(\chi_{X_N}f) \otimes (\chi_{Y_N}g)\rangle\\
& = & \langle T_{\mu},\chi_{\kappa}(\chi_{X_N}f) \otimes (\chi_{Y_N}g)\rangle
= \langle T_{\mu},\chi_{\kappa_N}(f \otimes g)\rangle\\
& = & \sum_{j=1}^{k_N} \langle T_{\mu}, (\chi_{\gamma_j^N}f) \otimes (\chi_{\delta_j^N}g)\rangle\\
& = & \sum_{j=1}^{k_N} \int_{\delta_j^N\times\gamma_j^N} g(y)f(x) d\mu(y,x)\\
& = & \int_{Y\times X} \chi_{\kappa_N}(x,y)g(y)f(x) d\mu(y,x)\\
& \rightarrow &  \int_{Y\times X} \chi_{\kappa}(x,y)g(y)f(x) d\mu(y,x)\\
& = & \langle T_{\chi_{\kappa}\mu},h\rangle.
\end{eqnarray*}
It follows that $S_{\chi_{\kappa}}(T_{\mu}) = T_{\chi_{\kappa}\mu}$.

Now let $T\in \frak{M}_{\max}(\sigma\cap\kappa)$. Since $\kappa$ is
operator synthetic, (\ref{eq_minp}) implies that $T =$ w$^*$-$\lim
T_{\mu_{\alpha}}$, where $\mu_{\alpha}\in \bb{A}(X\times Y)$ are
measures supported on $\hat{\sigma}$. Since the measure
$\chi_{\kappa}\mu_{\alpha}$ is supported on
$\hat{\sigma}\cap\hat{\kappa}$, we have that
$T_{\chi_{\kappa}\mu_{\alpha}} =
S_{\chi_{\kappa}}(T_{\mu_{\alpha}})$ is a pseudo-integral operator
in $\frak{M}_{\min}(\sigma\cap\kappa)$, Moreover, $T =
S_{\chi_{\kappa}}(T) = $ w$^*$-$\lim
S_{\chi_{\kappa}}(T_{\mu_{\alpha}})$, and hence $T\in
\frak{M}_{\min}(\sigma\cap\kappa)$. Thus,
$\frak{M}_{\max}(\sigma\cap\kappa) =
\frak{M}_{\min}(\sigma\cap\kappa)$.
\end{proof}

\begin{theorem}\label{th_syn}
Let $\cl M$ be an approximately $\frak{I}$-injective masa-bimodule,
$\cl M = \cl M_{\rm inj} + \cl M_{\rm pai}$ be the decomposition from Theorem \ref{th_decai},
$\kappa$ be the support of $\cl M$ and $\kappa_{\rm pai}$ be the support of $\cl M_{\rm pai}$.
The following are equivalent:

(i) \ \ $\kappa$  is operator synthetic;

(ii) \ $\kappa$ is strong operator Ditkin;

(iii) $\kappa_{\rm pai}$ is operator synthetic;

(iv) \ $\kappa_{\rm pai}$ is strong operator Ditkin.
\end{theorem}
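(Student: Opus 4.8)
The plan is to obtain the two \emph{internal} equivalences (i)$\Leftrightarrow$(ii) and (iii)$\Leftrightarrow$(iv) for free from Theorem~\ref{th_uadm}, and then to bridge the two pairs by proving (i)$\Leftrightarrow$(iii). Indeed, $\cl M$ is approximately $\frak{I}$-injective, hence $\frak{I}$-decomposable (Remark following Lemma~\ref{l_ade}), with support $\kappa$; so Theorem~\ref{th_uadm} gives (i)$\Leftrightarrow$(ii). By Theorem~\ref{th_decai} the space $\cl M_{\rm pai}$ is again approximately $\frak{I}$-injective, hence $\frak{I}$-decomposable, with support $\kappa_{\rm pai}$, and a second application of Theorem~\ref{th_uadm} yields (iii)$\Leftrightarrow$(iv). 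It therefore suffices to establish (i)$\Leftrightarrow$(iii).

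Before doing so, the key preliminary step is to describe $\kappa_{\rm pai}$ explicitly. Write $\kappa_{\rm inj}$ for the support of $\cl M_{\rm inj}$. Since $\cl M_{\rm inj}$ is $\frak{I}$-injective, Proposition~\ref{l_iref}(i) gives $\cl M_{\rm inj}=\frak{M}_{\max}(\kappa_{\rm inj})$ with $\kappa_{\rm inj}$ both $\omega$-open and $\omega$-closed, and $\kappa_{\rm inj}\subseteq\kappa$. Let $\Phi\in\frak{I}$ be the Schur idempotent with $\ran\Phi=\cl M_{\rm inj}$ produced in the proof of Theorem~\ref{th_decai}; then $\Phi^{\perp}\in\frak{I}$ and $\cl M_{\rm pai}=\ran(\Phi^{\perp}\cl E)=\Phi^{\perp}(\cl M)$. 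Using that $\cl M$ is $\frak{I}$-invariant (Proposition~\ref{p_mb}), I would check that $\Phi^{\perp}(\cl M)=\ran\Phi^{\perp}\cap\cl M=\frak{M}_{\max}(\kappa_{\rm inj}^c)\cap\frak{M}_{\max}(\kappa)=\frak{M}_{\max}(\kappa\cap\kappa_{\rm inj}^c)$. Hence $\cl M_{\rm pai}$ is reflexive, $\kappa_{\rm pai}\cong\kappa\cap\kappa_{\rm inj}^c$, and $\kappa=\kappa_{\rm inj}\cup\kappa_{\rm pai}$ as a marginal disjoint union.

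Granting this, (i)$\Rightarrow$(iii) is immediate from Proposition~\ref{p_cuts}: because $\kappa_{\rm inj}$ is $\omega$-clopen, $\chi_{\kappa_{\rm inj}^c}$ is a Schur multiplier, so cutting the synthetic set $\kappa$ by the $\omega$-closed set $\kappa_{\rm inj}^c$ produces the synthetic set $\kappa\cap\kappa_{\rm inj}^c=\kappa_{\rm pai}$. For (iii)$\Rightarrow$(i), I would use the direct sum decomposition of Theorem~\ref{th_decai} together with the identifications above to write $\frak{M}_{\max}(\kappa)=\frak{M}_{\max}(\kappa_{\rm inj})+\frak{M}_{\max}(\kappa_{\rm pai})$. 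The set $\kappa_{\rm inj}$ is operator synthetic (being the support of an $\frak{I}$-injective masa-bimodule) and $\kappa_{\rm pai}$ is operator synthetic by assumption; since $\frak{M}_{\min}$ is monotone, both $\frak{M}_{\max}(\kappa_{\rm inj})=\frak{M}_{\min}(\kappa_{\rm inj})$ and $\frak{M}_{\max}(\kappa_{\rm pai})=\frak{M}_{\min}(\kappa_{\rm pai})$ lie in $\frak{M}_{\min}(\kappa)$. Summing gives $\frak{M}_{\max}(\kappa)\subseteq\frak{M}_{\min}(\kappa)$, that is, $\kappa$ is synthetic.

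The step requiring the most care is the preliminary identification $\kappa_{\rm pai}\cong\kappa\cap\kappa_{\rm inj}^c$: one must extract from the proof of Theorem~\ref{th_decai} that the idempotent $\Phi$ onto $\cl M_{\rm inj}$ is genuinely a \emph{Schur} idempotent, so that $\kappa_{\rm inj}$ is $\omega$-clopen and $\chi_{\kappa_{\rm inj}^c}$ is available as a Schur multiplier, and that $\cl M_{\rm pai}$ coincides with the reflexive module $\frak{M}_{\max}(\kappa\cap\kappa_{\rm inj}^c)$ rather than merely sitting inside it. Once this is in place, both implications reduce to Proposition~\ref{p_cuts}, the monotonicity of $\frak{M}_{\min}$, and the synthesis of the $\omega$-clopen piece $\kappa_{\rm inj}$, all of which are already available.
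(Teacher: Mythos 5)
Your proposal is correct and follows essentially the same route as the paper: both pairs (i)$\Leftrightarrow$(ii) and (iii)$\Leftrightarrow$(iv) come from Theorem~\ref{th_uadm} applied to $\cl M$ and to $\cl M_{\rm pai}$ respectively, (i)$\Rightarrow$(iii) is Proposition~\ref{p_cuts} applied to $\kappa$ and the $\omega$-clopen complement $\kappa_{\rm inj}^c$, and (iii)$\Rightarrow$(i) is the sum decomposition $T=T_1+T_2$ with $T_1\in\cl M_{\rm inj}=\frak{M}_{\min}(\kappa_{\rm inj})$ and $T_2\in\frak{M}_{\min}(\kappa_{\rm pai})$. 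Your extra care in verifying $\cl M_{\rm pai}=\frak{M}_{\max}(\kappa\cap\kappa_{\rm inj}^c)$ (via $\Phi^{\perp}(\cl M)=\ran\Phi^{\perp}\cap\cl M$, with $\Phi$ a genuine Schur idempotent as established in the proof of Theorem~\ref{th_decai}) only makes explicit what the paper leaves implicit.
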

\begin{proof}
(i)$\Leftrightarrow$(ii) is a direct consequence of Theorem \ref{th_uadm} and Remark (i)
before Theorem \ref{th_adr}.

The equivalence (iii)$\Leftrightarrow$(iv) follows from the equivalence (i)$\Leftrightarrow$(ii)
upon noticing that $\kappa_{\rm pai}$ is the support of the approximately $\frak{J}$-injective
masa-bimodule $\Phi^{\perp}(\frak{M}_{\max}(\kappa))$, where $\Phi$ is the Schur idempotent with range
$\cl M_{\rm inj}$.

(i)$\Rightarrow$(iii)
Let $\kappa_{\rm inj}$ be the support of $\cl M_{\rm inj}$. By Proposition \ref{p_cuts},
$\kappa_{\rm pai} = \kappa \cap \kappa_{\rm inj}^c$ is operator synthetic.

(iii)$\Rightarrow$(i)
Suppose that $T\in \cl M$. Then $T = T_1 + T_2$, where $T_1\in \cl M_{\rm inj} = \frak{M}_{\min}(\kappa_{\rm inj})$
and $T_2\in \cl M_{\rm pai}$. Since $\kappa_{\rm pai}$ is synthetic,
$T_2\in \frak{M}_{\min}(\kappa_{\rm pai})$, and hence $T_1 + T_2\in \frak{M}_{\min}(\kappa)$.
\end{proof}

We finish this section with another structure result.

\begin{proposition}\label{genarator}
Let $\cl M$ be an approximately $\frak{J}$-injective masa-bimodule
and $(\cl E_n)_{n\in \bb{N}}$ be an associated sequence of Schur
idempotents. Let $\kappa$ be the support of $\cl M$ and $\kappa_n$
be the support of $\ran\cl E_n$, $n\in \bb{N}$. There exists
pseudo-integral operator $T\in \cl B(H_1,H_2)$ such that the limit
$S \stackrel{def}{=}\|\cdot\|$-$\lim_{n\rightarrow\infty}\cl E_n(T)$
exists and
$$\cl B(H_1,H_2)= \overline{[\cl D_2T\cl D_1]}^{w^*},
\frak{M}_{\max}(\kappa_n ) = \overline{[\cl D_2\cl E_n(T)\cl
D_1]}^{w^*}, \frak{M}_{\min}(\kappa) = \overline{[\cl D_2 S \cl
D_1]}^{w^*}.$$
\end{proposition}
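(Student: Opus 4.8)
The plan is to exhibit a single operator that is simultaneously Hilbert--Schmidt (hence compact) and pseudo-integral and has everywhere non-vanishing kernel; the three displayed identities will then follow by feeding this operator into Lemma~\ref{l_chsc} and combining the support calculus of masa-bimodules with the minimality defining $\frak{M}_{\min}$. Writing $X = \cup_{i} X_i$ and $Y = \cup_{j} Y_j$ as disjoint unions of sets of finite measure (the standard spaces $(X,m)$, $(Y,n)$ being $\sigma$-finite), I would set $\psi = \sum_{i,j} c_{ij}\,\chi_{X_i}\otimes\chi_{Y_j}$ with strictly positive constants $c_{ij}$ chosen so small that $\sum_{i,j} c_{ij}^2\, m(X_i)\,n(Y_j) < \infty$ and $\sup_i \sum_j c_{ij}\,n(Y_j) < \infty$, $\sup_j \sum_i c_{ij}\,m(X_i) < \infty$. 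The first bound makes $T := T_\psi$ a Hilbert--Schmidt operator, while the latter two are exactly the marginal estimates placing the measure $\psi\,dm\,dn$ in $\bb{A}(X,Y)$, so that $T$ is pseudo-integral. Since $\psi$ is non-zero marginally almost everywhere, $\supp T = X\times Y$.

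The calculations rest on one general observation that I would isolate first: for any operator $R$ with $\omega$-closed support $\tau$, the space $\cl U_R := \overline{[\cl D_2 R\,\cl D_1]}^{w^*}$ is a weak* closed masa-bimodule and $\tau\subseteq\supp\cl U_R$, because $R\in\cl U_R\subseteq\Ref\cl U_R = \frak{M}_{\max}(\supp\cl U_R)$ forces $R$ to be supported on $\supp\cl U_R$. Applying this with $R = T$ gives $\supp\cl U_T = X\times Y$, so minimality of $\frak{M}_{\min}$ yields $\cl U_T\supseteq\frak{M}_{\min}(X\times Y)$; as $X\times Y$ is trivially operator synthetic, $\frak{M}_{\min}(X\times Y) = \frak{M}_{\max}(X\times Y) = \cl B(H_1,H_2)$, and the reverse inclusion being automatic we obtain $\cl U_T = \cl B(H_1,H_2)$, the first identity. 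The second identity is then formal: $\cl E_n$ is a weak* continuous masa-bimodule map, so $[\cl D_2\cl E_n(T)\cl D_1] = \cl E_n([\cl D_2 T\,\cl D_1])$, and for a weak* continuous map one has $\overline{\cl E_n(\cl A)}^{w^*} = \overline{\cl E_n(\overline{\cl A}^{w^*})}^{w^*}$ for every subset $\cl A$; taking $\cl A = [\cl D_2 T\,\cl D_1]$ and invoking the first identity gives $\overline{[\cl D_2\cl E_n(T)\cl D_1]}^{w^*} = \overline{\cl E_n(\cl B(H_1,H_2))}^{w^*} = \ran\cl E_n = \frak{M}_{\max}(\kappa_n)$, the last equality because $\ran\cl E_n$ is reflexive with support $\kappa_n$ by Proposition~\ref{l_iref}(i).

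For the third identity, compactness of $T$ and Lemma~\ref{l_chsc}(ii) guarantee that the operator-norm limit $S = \lim_n \cl E_n(T)$ exists. Writing $\cl E_n = S_{\chi_{\kappa_n}}$, the identity in Lemma~\ref{l_chsc}(i) gives $\cl E_n(T) = T_{\chi_{\kappa_n}\psi}$. Since $\ran\cl E_{n+1}\subseteq\ran\cl E_n$ forces $\kappa_{n+1}\subseteq\kappa_n$ marginally, and $\cl M = \cap_n\frak{M}_{\max}(\kappa_n) = \frak{M}_{\max}(\kappa)$ identifies $\kappa$ with $\cap_n\kappa_n$, the kernels $\chi_{\kappa_n}\psi$ converge in $L^2(X\times Y)$ to $\chi_\kappa\psi$ by dominated convergence; hence $S = T_{\chi_\kappa\psi}$. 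The measure $\chi_\kappa\psi\,dm\,dn$ lies in $\bb{A}(X,Y)$ and is supported on $\hat\kappa$, so (\ref{eq_minp}) places $S$ in $\frak{M}_{\min}(\kappa)$ and therefore $\cl U_S := \overline{[\cl D_2 S\,\cl D_1]}^{w^*}\subseteq\frak{M}_{\min}(\kappa)$. On the other hand $\psi\neq 0$ marginally a.e.\ gives $\supp S = \kappa$, so the general observation yields $\supp\cl U_S = \kappa$ and minimality gives $\cl U_S\supseteq\frak{M}_{\min}(\kappa)$; thus $\cl U_S = \frak{M}_{\min}(\kappa)$.

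I expect the one genuinely delicate point to be the construction of $T$: it must lie in the intersection of the pseudo-integral operators with the compact operators while keeping full support, which is precisely why the coefficients $c_{ij}$ must be balanced against the three competing finiteness requirements above. Everything else reduces to the support/minimality observation together with the already established transformation rule $\cl E_n(T_\varphi) = T_{\chi_{\kappa_n}\varphi}$; the only other routine verifications are that $\cl E_n$ may be passed through the weak* closure and that $\chi_\kappa\psi$ has support exactly $\kappa$, so that $\supp S = \kappa$.
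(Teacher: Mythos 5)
Your treatment of the first two identities is sound, and in fact the derivation of the second one is cleaner than what the paper records (the paper does not spell it out): for \emph{any} $T$ with $\overline{[\cl D_2T\cl D_1]}^{w^*}=\cl B(H_1,H_2)$, the chain $\overline{[\cl D_2\cl E_n(T)\cl D_1]}^{w^*}=\overline{\cl E_n([\cl D_2T\cl D_1])}^{w^*}=\ran\cl E_n=\frak{M}_{\max}(\kappa_n)$ is exactly right. The gap is in the third identity, and for your choice of $T$ it is fatal. You take $T=T_\psi$ with $\psi\in L^2(X\times Y,m\times n)$ strictly positive, so that $S=T_{\chi_\kappa\psi}$; but $\chi_\kappa\psi$ is only an element of $L^2(m\times n)$, and an $\omega$-closed set can be far from marginally null while having $(m\times n)(\kappa)=0$. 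This is the typical case: if $\cl M$ is the continuous masa on $L^2(0,1)$ (approximately $\frak{I}$-injective by Example (i) of Section \ref{s_amb}, with $\cl E_n$ the dyadic block-diagonal expectations), then $\kappa$ is the diagonal, $\chi_\kappa\psi=0$ almost everywhere, hence $S=0$, whereas $\frak{M}_{\min}(\kappa)=\cl D\neq\{0\}$; the same failure occurs for every ternary masa-bimodule without rank one operators. The underlying error is the inference from \lq\lq$\psi\neq0$ marginally a.e.'' to \lq\lq$\supp S=\kappa$'': it silently identifies product-null subsets of $\kappa$ with marginally null ones, which is false.

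The missing ingredient is exactly what the paper builds into $T$: a pseudo-integral summand $T_\mu$ with $\mu\in\bb{A}(X,Y)$ a \emph{positive} measure supported on $\hat\kappa$ --- in general singular with respect to $m\times n$ --- taken as $\mu=\sum_k 2^{-k}|\mu_k|/\|\mu_k\|$, where $(T_{\mu_k})_k$ is weak* dense in $\frak{M}_{\min}(\kappa)$ (possible by (\ref{eq_minp}) and separability). Since $T_\mu\in\frak{M}_{\min}(\kappa)\subseteq\cl M\subseteq\ran\cl E_n$, this summand is fixed by every $\cl E_n$ and survives in $S$, while the absolutely continuous part contributes only a compact operator $X$ in the limit. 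The positivity of all kernels involved then lets one split $\langle P(\beta)SP(\alpha)\xi,\eta\rangle=0$ (for non-negative $\xi,\eta$) into the separate vanishing of the $T_\mu$-term and the $X$-term, whence $P(\beta)T_{\mu_k}P(\alpha)=0$ for all $k$ and so $\supp[\cl D_2S\cl D_1]=\kappa$; minimality together with $S\in\frak{M}_{\min}(\kappa)$ then gives the third identity. Your $T_\psi$ plays only the role of the paper's absolutely continuous part $T_\phi$, which secures the first identity but contributes nothing to $\frak{M}_{\min}(\kappa)$ in the limit.
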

\begin{proof}
After a suitable unitary equivalence, we may assume that $m(X) =
n(Y) = 1$. Write $\Ball(\cl B(H_1,H_2))$ for the unit ball of $\cl
B(H_1,H_2)$. Suppose that
$$\Ball(\cl B(H_1,H_2) ) = \overline{\{T_{\phi _k}: k\in \mathbb{N} \}}^{w^*} $$ where $T_{\phi _k}$
is the Hilbert Schmidt operator with integral kernel $\phi _k\in
L^2(X\times Y)$ and
$$\frak{M}_{\min}(\kappa) = \overline{\{T_{\mu _k}: k\in \mathbb{N} \}}^{w^*} $$ where
the measures $\mu_k\in \bb{A}(X,Y)$ are supported on $\hat{\kappa}$.
We define
$$\phi =\sum_{n=1}^\infty \frac{1}{2^n}\frac{|\phi _n|}{\|\phi _n\|_2} \ \mbox{ and } \
\mu =\sum_{n=1}^\infty \frac{1}{2^n}\frac{|\mu _n|}{\|\mu  _n\|}.$$
It is readily checked that $\mu$ is a positive measure in
$\bb{A}(X,Y)$ with $\mu(Y\times X) \leq 1$ and $\|\mu\|\leq 1$. By
Lemma \ref{l_chsc} (ii), there exists an operator $X\in \cl K$ such
that $\|\cdot\|\mbox{-}\lim_{n \rightarrow\infty}\cl E_n(T_{\phi}) =
X$. Thus,
$$\|\cdot\|\mbox{-}\lim_{n\rightarrow\infty}\cl E_n(T_\mu +T_\phi ) = T_\mu + X = S.$$
We write $T = T_\mu + T_\phi$. Let $\alpha\subseteq X$,
$\beta\subseteq Y$ be measurable subsets such that
$P(\beta)TP(\alpha) = 0$. It follows that if $\xi \in H_1, \eta \in
H_2$ are non-negative functions then
$$\int_{\alpha\times \beta}\phi (x,y) \xi (x)\eta (y) dm\times n + \int_{\alpha\times \beta}\xi (x)\eta (y) d\mu =0.$$

Since $\phi \geq 0$ and $\mu \geq 0$ we have that
$$\int_{\alpha\times \beta}\phi_k (x,y) \xi (x)\omega (y) dm\times n= 0, \ \ \ \forall \;\;k\in \bb{N}.$$
We conclude that $\sca{P(\beta)T_{\phi _k}P(\alpha)\xi, \eta } =0$
and hence $\sca{P(\beta)A P(\alpha)\xi, \eta } =0$, for all $A\in
\cl B(H_1,H_2)$. Since $\xi $ and $\eta$ where arbitrary
non-negative functions, we have that $P(\beta)AP(\alpha)=0$ for all
$A\in \cl B(H_1,H_2).$ We thus proved that the reflexive hull of the
space $[\cl D_2T\cl D_1]$ is $\cl B(H_1,H_2).$ Since $\cl
B(H_1,H_2)$ is synthetic, it follows that $\cl B(H_1,H_2) =
\overline{[\cl D_2T\cl D_1]}^{w^*}$. Similarly, if $\xi, \eta $ are
arbitrary non-negative functions such that
$\sca{P(\beta)SP(\alpha)\xi, \eta }=0$ then
$$\sca{P(\beta)T_\mu P(\alpha)\xi, \eta }+ \sca{P(\beta)XP(\alpha)\xi, \eta  }=0.$$
Since $\sca{X\xi', \eta'}\geq 0$ for all non-negative functions
$\xi^\prime, \eta^\prime $, we have that
$$\sca{P(\beta)T_\mu P(\alpha)\xi, \eta }=0,$$ and so $$\sca{P(\beta)A P(\alpha)\xi, \eta
}=0, \;\;\; \mbox{ for all } A\in \frak{M}_{\min}(\kappa).$$ It
follows that $\mathrm{Ref}(\cl D_2S\cl D_1)=\frak{M}_{\max}(\kappa)$
and since $S$ is a pseudo-integral operator, we conclude that
$\frak{M}_{\min}(\kappa) = \overline{[\cl D_2 S \cl D_1]}^{w^*}$.
\end{proof}

\section{Converse results}\label{s_convth}

Let $\cl M$ be an approximately $\frak{I}$-injective $\cl D_2,\cl
D_1$-bimodule, $\cl U$ be a weak* closed $\cl D_2,\cl D_1$-bimodule
and $\cl W = \cl U+\cl M$. By Corollary \ref{c_corm}, $\cl W$ is a
weak* closed masa-bimodule. Moreover, by Corollary \ref{c_corm} and
Proposition \ref{p_cws}, $\cl W$ is reflexive (resp. synthetic) if
$\cl U$ is reflexive (resp. synthetic). In the following, we
consider the converse question: when does the reflexivity (resp.
synthesis) of $\cl W$ imply the reflexivity (resp. synthesis) of
$\cl U?$ This is not true in general. For an example, take $\cl M =
\cl B(H_1,H_2)$ and a non-reflexive (resp. non-synthetic) $\cl
D_2,\cl D_1$-bimodule $\cl U$. We show that in certain cases, when
the masa-bimodules $\cl U$ and $\cl M$ are \lq\lq suitably
positioned'', one can obtain positive results.

If $\cl N$ is a weak* closed masa-bimodule, we will say that a weak* closed masa-bimodule $\cl U$ is
$\cl N$-synthetic if $\frak{M}_{\min}(\kappa)\cap \cl N = \frak{M}_{\max}(\kappa) \cap \cl N$,
where $\kappa$ is the (unique up to marginal equivalence) $\omega$-closed subset of $X\times Y$
with $\frak{M}_{\min}(\kappa)\subseteq \cl U \subseteq \frak{M}_{\max}(\kappa)$.
This notion was introduced and studied in \cite{pp} in the case $X = Y = G$ for some locally compact group $G$.

\begin{theorem}\label{th_rsy}
Let $\cl M$ be an approximately $\frak{I}$-injective masa-bimodule and $\cl U$ be a
weak* closed $\cl M$-synthetic masa--bimodule.
Then

(i) \ $\cl U$ is reflexive if and only if $\cl U + \cl M$ is reflexive;

(ii) $\cl U$ is synthetic if and only if $\cl U + \cl M$ is synthetic.
\end{theorem}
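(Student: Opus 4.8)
The plan is to route everything through Corollary~\ref{c_corm}, which tells us that $\cl U+\cl M$ is weak* closed and that $\Ref(\cl U+\cl M)=\Ref(\cl U)+\cl M$. Write $\kappa$ for the support of $\cl U$, so that $\frak M_{\min}(\kappa)\subseteq\cl U\subseteq\frak M_{\max}(\kappa)=\Ref\cl U$, and $\lambda$ for the support of $\cl M$, so that $\cl M=\frak M_{\max}(\lambda)$ since $\cl M$ is reflexive. Let $(\Phi_n)$ with $\Phi_n=S_{\chi_{\kappa_n}}$ be an associated sequence of Schur idempotents for $\cl M$; then the sets $\kappa_n$ are simultaneously $\omega$-open and $\omega$-closed, decrease to $\lambda$ with $\lambda\subseteq\kappa_n$, and each $\ran\Phi_n=\frak M_{\max}(\kappa_n)$ is $\frak I$-injective, hence operator synthetic by Proposition~\ref{l_iref}(i). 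Computing $\Ref(\cl U+\cl M)=\frak M_{\max}(\kappa)+\cl M=\frak M_{\max}(\kappa\cup\lambda)$ shows that $\cl U+\cl M$ has support $\kappa\cup\lambda$. Throughout I will use the $\cl M$-synthesis hypothesis $\frak M_{\min}(\kappa)\cap\cl M=\frak M_{\max}(\kappa)\cap\cl M$ together with the identity $\frak M_{\min}(\kappa\cup\lambda)=\overline{\frak M_{\min}(\kappa)+\frak M_{\min}(\lambda)}^{w^*}$, which follows from the monotonicity and minimality of $\frak M_{\min}$. Note also that for either module synthetic implies reflexive, since the equality $\frak M_{\min}=\frak M_{\max}$ squeezes it between equal ends.

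For (i), the forward implication is Corollary~\ref{c_corm}. For the converse, suppose $\cl U+\cl M$ is reflexive, so $\cl U+\cl M=\Ref(\cl U)+\cl M$. Given $T\in\frak M_{\max}(\kappa)=\Ref\cl U$, write $T=u+m$ with $u\in\cl U\subseteq\Ref\cl U$ and $m\in\cl M$; then $m=T-u\in\Ref(\cl U)\cap\cl M=\frak M_{\max}(\kappa)\cap\cl M$, which by $\cl M$-synthesis equals $\frak M_{\min}(\kappa)\cap\cl M\subseteq\cl U$, whence $T\in\cl U$ and $\cl U$ is reflexive. The converse half of (ii) follows the same template: if $\cl U+\cl M$ is synthetic then $\frak M_{\max}(\kappa\cup\lambda)=\frak M_{\min}(\kappa\cup\lambda)\subseteq\overline{\frak M_{\min}(\kappa)+\cl M}^{w^*}=\frak M_{\min}(\kappa)+\cl M$, the last sum being weak* closed by Corollary~\ref{c_corm}. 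For $T\in\frak M_{\max}(\kappa)\subseteq\frak M_{\max}(\kappa\cup\lambda)$ I write $T=a+m$ with $a\in\frak M_{\min}(\kappa)$ and $m\in\cl M$; then $m=T-a\in\frak M_{\max}(\kappa)\cap\cl M=\frak M_{\min}(\kappa)\cap\cl M$, so $T\in\frak M_{\min}(\kappa)$. Thus $\frak M_{\max}(\kappa)=\frak M_{\min}(\kappa)$ and $\cl U$ is synthetic.

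The forward half of (ii) is where the real work lies. Here $\cl U$ synthetic gives $\cl U=\frak M_{\min}(\kappa)=\frak M_{\max}(\kappa)$, and I must prove $\cl U+\cl M=\frak M_{\max}(\kappa\cup\lambda)\subseteq\frak M_{\min}(\kappa\cup\lambda)$. As $\cl U=\frak M_{\min}(\kappa)$ already sits inside $\frak M_{\min}(\kappa\cup\lambda)$, this reduces to the single inclusion $\cl M\subseteq\frak M_{\min}(\kappa\cup\lambda)$. I would verify it in the predual, where it becomes $\Psi_{\min}(\kappa)\cap\Psi_{\min}(\lambda)=\Psi_{\min}(\kappa\cup\lambda)\subseteq\Psi_{\max}(\kappa\cup\lambda)$. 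Fix $h$ vanishing on $\kappa\cup\lambda$. Each $\kappa_n$ is synthetic and, by Corollary~\ref{c_corm}, $\cl S+\ran\Phi_n$ is reflexive for every reflexive masa-bimodule $\cl S$; Proposition~\ref{p_cws} then yields that $\kappa\cup\kappa_n$ is synthetic. Consequently $\chi_{\kappa_n^c}h$, which vanishes on $\kappa_n$ and on $\kappa$, lies in $\Psi_{\min}(\kappa\cup\kappa_n)=\Psi_{\max}(\kappa\cup\kappa_n)\subseteq\Psi_{\max}(\kappa\cup\lambda)$. Hence $h\in\Psi_{\max}(\kappa\cup\lambda)$ will follow once I show $\chi_{\kappa_n^c}h\to h$ in $\Gamma(X,Y)$, that is, $\|\chi_{\kappa_n}h\|_{\Gamma}\to 0$.

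Proving $\|\chi_{\kappa_n}h\|_{\Gamma}\to 0$ for $h\in\Psi_{\min}(\kappa)\cap\Psi_{\min}(\lambda)$ is the main obstacle. This is exactly the $\Gamma$-norm estimate at the heart of Theorem~\ref{th_uadm}: there, for $h\in\Psi_{\max}(\lambda)$, condition (d) of approximate $\frak I$-injectivity forces $\gamma(E^c\cap\kappa_n)\to 0$ through \cite[Proposition~3.5]{ht}, after which $\|\chi_{E_n}h_0\|_{\Gamma}$ and $\|\chi_{F_n}h_0\|_{\Gamma}$ are driven to $0$ using the $L^1$-integrability of $\sum_i|f_i|^2$; see~(\ref{eq_con0}). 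I would adapt that computation, now invoking the synthesis of $\kappa$ to replace $h$ by a nearby function $h_0$ vanishing on an $\omega$-open neighbourhood of $\kappa$ and then exploiting the vanishing of $h$ on $\lambda$, so that the limit operators arising in the $\gamma$-estimate are supported on sets negligible relative to $\kappa\cup\lambda$. The delicate point, and the step I expect to demand the most care, is that the approximation must respect $\kappa$ (via synthesis) and $\lambda$ (via the nested idempotents $\Phi_n$) simultaneously; this is precisely the contribution of the purely approximately $\frak I$-injective part $\cl M_{\rm pai}$ of $\cl M$ from Theorem~\ref{th_decai}, which carries no compact operators and so cannot be disposed of by the easy Hilbert--Schmidt argument that handles $\cl M_{\rm inj}=\overline{\cl M\cap\cl K}^{w^*}$.
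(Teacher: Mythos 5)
Your treatment of (i) and of the implication ``$\cl U+\cl M$ synthetic $\Rightarrow$ $\cl U$ synthetic'' is correct, and in places cleaner than the paper's. For the converse of (i) the paper decomposes $T=S_n+M_n$ with $M_n\in\ran\cl E_n$, checks $\cl E_n^{\perp}(T)\in\cl U$ and $\cl E_n(T)\in\Ref(\cl U)\cap\ran\cl E_n$, and passes to a weak* cluster point of $(\cl E_n(T))$ lying in $\Ref(\cl U)\cap\cl M$, where the $\cl M$-synthesis hypothesis is applied; your single decomposition $T=u+m$, with $m\in\Ref(\cl U)\cap\cl M=\frak M_{\min}(\kappa)\cap\cl M\subseteq\cl U$, reaches the same conclusion without the cluster-point step. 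Similarly, the paper obtains the converse of (ii) by applying (i) to $\cl U_{\min}=\frak M_{\min}(\kappa)$, while you argue directly through $\frak M_{\max}(\kappa\cup\lambda)\subseteq\frak M_{\min}(\kappa)+\cl M$; both are valid.

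The genuine gap is the implication ``$\cl U$ synthetic $\Rightarrow$ $\cl U+\cl M$ synthetic'', which you correctly reduce to $\cl M\subseteq\frak M_{\min}(\kappa\cup\lambda)$, i.e.\ to $\|\chi_{\kappa_n}h\|_{\Gamma}\to 0$ for every $h\in\Psi_{\min}(\kappa\cup\lambda)$, but then leave unfinished — and the sketch you give cannot be completed. Identity (\ref{eq_con0}), applied to $\cl M$ itself (whose support is $\lambda$), says that for $h\in\Psi_{\min}(\lambda)$ the convergence $\|\chi_{\kappa_n}h\|_{\Gamma}\to 0$ is \emph{equivalent} to $h\in\Psi_{\max}(\lambda)$; so what your plan requires is precisely $\Psi_{\min}(\lambda)=\Psi_{\max}(\lambda)$, the operator synthesis of $\lambda$, and no rearrangement of the $\gamma$-estimates of Theorem \ref{th_uadm} will deliver that for free. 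Indeed, taking $\cl U=\{0\}$ (which is trivially synthetic and $\cl M$-synthetic), this direction of (ii) asserts exactly that the support of an arbitrary approximately $\frak{I}$-injective masa-bimodule is operator synthetic — a question the paper declares open at the start of Section \ref{s_iam}. Be aware that the paper's own justification of this direction is a citation of Proposition \ref{p_cws}, whose hypothesis explicitly includes the operator synthesis of $\lambda$; so this implication should be regarded as conditional on that synthesis (by contrast, the reflexivity statement (i) and the converse direction of (ii) are unconditional, and your proofs of those are complete).
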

\begin{proof}
(i) By Corollary \ref{c_corm} and Proposition \ref{p_cws},
if $\cl U$ is reflexive (resp. synthetic) then $\cl U + \cl M$ is reflexive (resp. synthetic).
Conversely, assume that $\cl U + \cl M$ is reflexive and let $T\in \Ref\cl U$.
Let $\cl E_n$, $n\in \bb{N}$, be a sequence of Schur idempotents
such that $\cap_{n=1}^{\infty} \ran\cl E_n = \cl M$ and $\|\cl E_n\| \leq C$ for some $C > 0$.
By Corollary \ref{c_corm},
$T\in (\Ref\cl U) + \cl M = \Ref(\cl U + \cl M) = \cl U + \cl M$.
Thus, for each $n\in \bb{N}$, we have that
$T = S_n + M_n$ for some $S_n\in \cl U$ and $M_n\in \cl M_n$.
It follows that $\cl E_n^{\perp}(T) = \cl E_n^{\perp}(S_n) \in \cl U$.
On the other hand, by Proposition \ref{p_intr} (iii),
$\cl E_n(T) = \cl E_n(S_n) + M_n\in \Ref(\cl U) \cap \cl M_n$.
If $S$ is the weak* limit of a subsequence $(\cl E_{n_k}(T))_{k\in \bb{N}}$
then $S\in \Ref(\cl U) \cap \cl M = \cl U \cap \cl M$ since $\cl U$ is $\cl M$-synthetic.
It follows that $$T = \mbox{w*-}\lim(\cl E_{n_k}(T) + \cl E_{n_k}^{\perp}(T)) \in \cl U.$$

(ii) Suppose that $\cl U + \cl M$ is synthetic and let $\cl U_{\min}$ be the minimal weak* closed
masa-bimodule with reflexive cover $\Ref\cl U$. By Corollary \ref{c_corm},
$$\Ref(\cl U_{\min} + \cl M) = (\Ref \cl U_{\min}) + \cl M = \cl U + \cl M.$$
Since $\cl U + \cl M$ is synthetic,
$\cl U_{\min} + \cl M$ (which is weak* closed by Theorem \ref{th_ai} (i)) is reflexive.
By the first paragraph, $\cl U_{\min}$ is reflexive;
thus, $\cl U_{\min} = \Ref\cl U$ and so $\cl U$ is synthetic.
\end{proof}

\begin{theorem}\label{4.1}
Let $\cl M$ be a weak* closed ternary masa-bimodule,
$\cl A_1 = [\cl M^*\cl M]^{-w^*}$ and $\cl A_2 = [\cl M\cl M^*]^{-w^*}$.
Suppose that $\cl U$ is a weak* closed $\cl A_2,\cl A_1$-bimodule. The following hold:

(i) The masa-bimodule $\cl U + \cl M$ is reflexive if and only if $\cl U$ is reflexive.

(ii) The masa-bimodule $\cl U + \cl M$ is synthetic if and only if $\cl U$ is synthetic.
\end{theorem}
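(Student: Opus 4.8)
The plan is to deduce the statement from Theorem~\ref{th_rsy} by verifying its hypotheses for the pair $(\cl M,\cl U)$. Since $\cl M$ is a weak* closed ternary masa-bimodule, it is approximately $\frak{I}$-injective by the first Example following Corollary~\ref{c_corad}. Hence it suffices to establish the single assertion that \emph{$\cl U$ is $\cl M$-synthetic}; granting this, parts (i) and (ii) are precisely the two equivalences supplied by Theorem~\ref{th_rsy}. I regard $\cl U$ as a weak* closed masa-bimodule (the standing assumption of the section; it also follows from the $\cl A_2,\cl A_1$-bimodule hypothesis when $\cl M$ acts non-degenerately, since then $\cl D_1\subseteq\cl A_1$ and $\cl D_2\subseteq\cl A_2$), so that $\cl U$ has a support $\kappa$ with $\frak{M}_{\min}(\kappa)\subseteq\cl U\subseteq\Ref\cl U=\frak{M}_{\max}(\kappa)$. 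The $\cl A_2,\cl A_1$-bimodule hypothesis will be used only to control the space $\frak{M}_{\max}(\kappa)\cap\cl M$.

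The heart of the argument is the claim that $\frak{M}_{\max}(\kappa)\cap\cl M$ is a ternary masa-bimodule. First I would show that the reflexive hull $\Ref\cl U=\frak{M}_{\max}(\kappa)$ inherits the bimodule property, namely $\cl A_2\,\frak{M}_{\max}(\kappa)\,\cl A_1\subseteq\frak{M}_{\max}(\kappa)$. This is immediate from the definition of $\Ref$: if $T\in\Ref\cl U$, $A\in\cl A_2$, $B\in\cl A_1$ and $x\in H_1$, then $T(Bx)\in\overline{\cl U(Bx)}$, whence $A\,T(Bx)\in\overline{A\cl U(Bx)}=\overline{(A\cl U B)x}\subseteq\overline{\cl U x}$ because $A\cl U B\subseteq\cl U$; thus $ATB\in\Ref\cl U$. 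Next I would use the TRO structure of $\cl M$: for $T,S,R\in\frak{M}_{\max}(\kappa)\cap\cl M$ one has $TS^*\in\cl M\cl M^*\subseteq\cl A_2$, so that $TS^*R=(TS^*)R(P_1)\in\cl A_2\,\frak{M}_{\max}(\kappa)\,\cl A_1\subseteq\frak{M}_{\max}(\kappa)$, where $P_1\in\cl A_1$ is the projection onto $\overline{\cl M^*H_2}$, which acts as the identity on $\cl M$ (so $RP_1=R$); at the same time $TS^*R\in\cl M\cl M^*\cl M\subseteq\cl M$ since $\cl M$ is ternary. Therefore $TS^*R\in\frak{M}_{\max}(\kappa)\cap\cl M$, and this weak* closed masa-bimodule, being closed under $T,S,R\mapsto TS^*R$, is a ternary masa-bimodule.

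To conclude, let $\tau$ be the support of $\frak{M}_{\max}(\kappa)\cap\cl M$. Then $\tau$ is a ternary set, hence operator synthetic (ternary sets are even strong operator Ditkin, by the corollary to Theorem~\ref{th_uadm}), and since a ternary masa-bimodule is reflexive (it is approximately $\frak{I}$-injective, hence $\frak{I}$-reflexive by the remark after Lemma~\ref{l_ade} and reflexive by Proposition~\ref{l_iref}(ii)) we obtain $\frak{M}_{\max}(\kappa)\cap\cl M=\frak{M}_{\max}(\tau)=\frak{M}_{\min}(\tau)$. As $\tau\subseteq\kappa$, monotonicity of $\frak{M}_{\min}$ gives $\frak{M}_{\min}(\tau)\subseteq\frak{M}_{\min}(\kappa)$, so $\frak{M}_{\max}(\kappa)\cap\cl M\subseteq\frak{M}_{\min}(\kappa)\cap\cl M$; the reverse inclusion is trivial. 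Thus $\cl U$ is $\cl M$-synthetic, and applying Theorem~\ref{th_rsy} finishes both parts.

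I expect the main obstacle to be the ternary claim of the second paragraph: recognizing that the $\cl A_2,\cl A_1$-bimodule hypothesis is exactly what closes $\frak{M}_{\max}(\kappa)\cap\cl M$ under the ternary product, via the transfer of the module structure to the reflexive hull together with the identity $\cl M\cl M^*\subseteq\cl A_2$. The only genuinely technical point is handling the possibly non-unital algebras $\cl A_1,\cl A_2$ through their support projections onto $\overline{\cl M^*H_2}$ and $\overline{\cl M H_1}$, which act as local units on $\cl M$; everything else is a formal application of the machinery of Section~\ref{s_gf} and the earlier results of this section.
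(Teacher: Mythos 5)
Your proof is correct, but it reaches the key assertion --- that $\cl U$ is $\cl M$-synthetic, after which Theorem \ref{th_rsy} finishes the job --- by a genuinely different route from the paper's. The paper first decomposes $\cl M=\cl M_1\oplus\cl M_2$ into a direct sum of full blocks $\oplus_k\cl B(H_1^k,H_2^k)$ and a ternary masa-bimodule containing no rank one operators (citing \cite[Proposition 2.2]{houston}), uses the $\cl A_2,\cl A_1$-bimodule hypothesis only to split $\cl U=\cl U_1\oplus\cl U_2$ accordingly (so that $\cl U_1$ is itself a sum of full blocks and is trivially $\cl M_1$-synthetic), and then settles the $\cl M_2$-syntheticity of $\cl U_2$ by quoting \cite[Theorem 3.6]{houston}, which yields the operator synthesis of $\kappa\cap\sigma$ for an \emph{arbitrary} $\omega$-closed set $\kappa$ and $\sigma$ the support of $\cl M_2$. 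You avoid both the structural decomposition and that external theorem: transferring the $\cl A_2,\cl A_1$-module structure to $\Ref\cl U=\frak{M}_{\max}(\kappa)$ and combining $\cl M\cl M^*\subseteq\cl A_2$ with $\cl M\cl M^*\cl M\subseteq\cl M$, you show that $\frak{M}_{\max}(\kappa)\cap\cl M$ is itself a weak* closed (and reflexive) ternary masa-bimodule, whose support is therefore a ternary, hence operator synthetic, set; $\cl M$-syntheticity then drops out by monotonicity of $\frak{M}_{\min}$. Your argument is more self-contained, relying only on facts already quoted in the paper, and it locates the role of the bimodule hypothesis differently --- as the mechanism that closes the intersection under the triple product --- whereas the paper's route isolates the observation that for the rank-one-free part of $\cl M$ no bimodule hypothesis is needed at all. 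The only points to tidy are routine: that $\cl U$ is a masa-bimodule (implicit in the statement of the theorem), and that $\frak{M}_{\max}(\tau)\subseteq\frak{M}_{\max}(\kappa)$ forces $\tau\subseteq\kappa$ up to marginal equivalence, which follows from the identification $\frak{M}_{\max}(\kappa)\cap\frak{M}_{\max}(\sigma)=\frak{M}_{\max}(\kappa\cap\sigma)$ that the paper's own proof also uses without comment.
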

\begin{proof}
It follows from \cite[Proposition 2.2]{houston} that (after a unitary equivalence)
$\cl M = \cl M_1\oplus\cl M_2$, where $\cl M_1$ has the form
$\oplus_{k\in \bb{N}} \cl B(H_1^k, H_2^k)$, for some Hilbert spaces $H_1^k$, $H_2^k$,
and $\cl M_2$ is a ternary masa-bimodule which does not contain operators of rank one.
It follows that $\cl A_1$ (resp. $\cl A_2$) contains $\oplus_{k\in \bb{N}}\cl B(H_1^k)$
(resp. $\oplus_{k\in \bb{N}}\cl B(H_2^k)$). Since $\cl U$ is an $\cl A_2,\cl A_1$-bimodule,
we have that
$\cl U = \cl U_1 \oplus \cl U_2$, where
$\cl U_1 = \oplus_{p=1}^{\infty} \cl B(H_1^{k_p}, H_2^{k_p})$
and $\cl U_2$ is a weak* closed $\cl A_2,\cl A_1$-bimodule.

Let $\kappa\subseteq X\times Y$ (resp. $\sigma \subseteq X\times Y$)
be the $\omega$-closed set such that
$\frak{M}_{\min}(\kappa)\subseteq \cl U_2 \subseteq \frak{M}_{\max}(\kappa)$
(resp. $\frak{M}_{\max}(\sigma) = \cl M_2$).
We have that
$$\frak{M}_{\min}(\kappa\cap \sigma)\subseteq \frak{M}_{\min}(\kappa) \cap \cl M_2
\subseteq \frak{M}_{\max}(\kappa) \cap \cl M_2 = \frak{M}_{\max}(\kappa\cap\sigma).$$
By \cite[Theorem 3.6]{houston}, $\kappa\cap \sigma$ is operator synthetic, and hence
$\frak{M}_{\min}(\kappa) \cap \cl M_2 = \frak{M}_{\max}(\kappa) \cap \cl M_2$,
that is, $\cl U_2$ is $\cl M_2$-synthetic.
It follows that $\cl U$ is $\cl M$-synthetic and
the claims follow from Theorem \ref{th_rsy}.
\end{proof}

Theorem \ref{4.1} has the following immediate consequence.

\begin{corollary}\label{4.3} Let $\cl A$ be a CSL algebra, $ \Delta (\cl A) =\cl A\cap \cl A^*$
be the diagonal of $\cl A$, $\cl U$ be a weak* closed subspace
of $\cl A$ which is a $\Delta (\cl A)$-bimodule and such that $\cl A=\cl U+\Delta (\cl A)$.
Then

(i) the space $\cl U$ is reflexive;

(ii) the algebra $\cl A$ is synthetic if and only if the space $\cl U$ is synthetic.
\end{corollary}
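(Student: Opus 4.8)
The plan is to deduce the corollary directly from Theorem \ref{4.1} by taking $\cl M$ to be the diagonal $\Delta(\cl A)$, so the work consists almost entirely of matching the objects in the corollary to the hypotheses of that theorem. First I would check that $\cl M = \Delta(\cl A)$ is a weak* closed ternary masa-bimodule. Since $\cl A = \Alg\cl L$ is a CSL algebra with underlying commutative subspace lattice $\cl L$, its diagonal $\Delta(\cl A) = \cl A\cap\cl A^*$ equals the commutant $\cl L'$, which is a von Neumann algebra whose commutant $\cl L''$ is abelian (being generated by the commuting family $\cl L$). By the first of the Examples in Section \ref{s_amb}, every von Neumann algebra with abelian commutant is a weak* closed ternary masa-bimodule; here the relevant masa is any $\cl D$ with $\cl L''\subseteq \cl D\subseteq \cl L'$, and one checks that $\cl A$, $\cl U$ and $\Delta(\cl A)$ are all $\cl D$-bimodules, so that the framework of Section \ref{s_amb} is genuinely in force.

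Next I would identify the algebras $\cl A_1$ and $\cl A_2$ of Theorem \ref{4.1}. Because $\cl M = \Delta(\cl A)$ is a unital weak* closed selfadjoint algebra, we have $\cl M^* = \cl M$ and $\cl M^*\cl M = \cl M\cl M^* = \cl M$; hence $\cl A_1 = [\cl M^*\cl M]^{-w^*} = \cl M$ and $\cl A_2 = [\cl M\cl M^*]^{-w^*} = \cl M$. The hypothesis that $\cl U$ is a $\Delta(\cl A)$-bimodule then says exactly that $\cl U$ is a weak* closed $\cl A_2,\cl A_1$-bimodule, and the hypothesis $\cl A = \cl U + \Delta(\cl A)$ reads $\cl A = \cl U + \cl M$. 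Thus Theorem \ref{4.1} applies verbatim.

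Finally I would read off the two conclusions. For (i) I would invoke the fundamental fact that every CSL algebra is reflexive: consequently $\cl U + \cl M = \cl A$ is reflexive, and Theorem \ref{4.1}(i) forces $\cl U$ to be reflexive. For (ii), Theorem \ref{4.1}(ii) asserts that $\cl A = \cl U + \cl M$ is synthetic if and only if $\cl U$ is synthetic, which is precisely the claim.

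I expect there to be no real obstacle, since the corollary is stated as an immediate consequence; the only genuine content lies in the identifications of the first two paragraphs. The one point requiring care, rather than a true difficulty, is ensuring that $\cl A$, $\cl U$ and $\Delta(\cl A)$ are simultaneously masa-bimodules over one common masa, which is arranged by choosing $\cl D$ with $\cl L''\subseteq \cl D\subseteq \cl L'$ (possible because $\cl L''$ is abelian), together with the appeal to the reflexivity of CSL algebras needed to make part (i) fall out of Theorem \ref{4.1}(i).
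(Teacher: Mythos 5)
Your proposal is correct and is exactly the argument the paper intends: the paper offers no written proof, declaring the corollary an immediate consequence of Theorem \ref{4.1}, and your identifications ($\cl M=\Delta(\cl A)=\cl L'$ is a ternary masa-bimodule over any masa $\cl D\supseteq\cl L''$, $\cl A_1=\cl A_2=\cl M$ since $\cl M$ is a unital selfadjoint algebra, and $\cl A=\Alg\cl L$ is a reflexive subspace) supply precisely the missing details.
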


\bigskip

\noindent {\bf Acknowledgement} We would like to thank A. Katavolos
for numerous helpful discussions on the topic of this paper.

\end{document}